\numberwithin{equation}{section}
\newtheorem{thm}{Theorem}[section]
\newtheorem{lem}{Lemma}[section]
\newtheorem{alg}{Algorithm}[section]
\theoremstyle{remark}
\newtheorem{example}{Example}[section]
\newtheorem{rem}{Remark}[section]
\numberwithin{equation}{section} \pagestyle{plain}
\newcommand{\dx}[1]{\mathrm{d}#1}
\newcommand{\prnt}[1]{\left( #1 \right)}
\newcommand{\norm}[1]{\|#1\|}
\newcommand{\normL}[2]{\norm{#1}_{L^2\prnt{#2}}}
\newcommand{\normH}[2]{\norm{#1}_{H^1\prnt{#2}}}
\newcommand{\normLp}[3]{\norm{#1}_{L^{#3}\prnt{#2}}}
\newcommand{\const}[2]{C_{\text{#1},#2}}
\newcommand{\Const}[1]{C_{\text{#1}}}
\newcommand{\Csb}{C_{\text{stb}}}
\newcommand{\normg}{\|g\|_{L^2(\Gamma_A)}}
\DeclareMathOperator*{\argmin}{arg\,min}
\def\to{\rightarrow}
\def\Om{\Omega}  \def\om{\omega}
\def\m{\mbox}   
 \def\cT{\mathcal{T}}
\begin{document}
\title{Quasi-Optimality of an Adaptive Finite Element Method for Cathodic Protection}

\author{Guanglian Li\thanks{ Department of Mathematics, Imperial College London, London SW7 2AZ, UK. G. Li acknowledges the support from the Royal Society and Hausdorff Center for Mathematics, Bonn. G. Li also acknowledges the hospitality of HIM (trisemester program on multi-scale problems) and IPAM (long program on Computational Issues in Oil Field Applications). ({\tt guanglian.li@imperial.ac.uk, lotusli0707@gmail.com})}\and Yifeng Xu\thanks{Department of Mathematics and Scientific Computing Key
Laboratory of Shanghai Universities, Shanghai Normal University,
Shanghai 200234, China. The work of Y. Xu (corresponding author) was in part
supported by National Natural Science Foundation of China
(11201307), Ministry of Education of the People's Republic of
China through Specialized Research Fund for the Doctoral Program
of Higher Education (20123127120001) and Natural Science Foundation of Shanghai (17ZR1420800). ({\tt yfxu@shnu.edu.cn, yfxuma@aliyun.com})}}

\date{}

\maketitle

\begin{abstract}
In this work, we derive a reliable and efficient residual-typed error estimator for the finite element approximation of a
2d cathodic protection problem governed by a steady-state diffusion equation with a nonlinear boundary condition. We
propose a standard adaptive finite element method involving the D\"{o}rfler marking and a
minimal refinement without the interior node property. Furthermore, we establish the
contraction property of this adaptive algorithm in terms of the sum of the energy error and the scaled estimator. This essentially allows for a quasi-optimal convergence rate in terms of the number of elements over the underlying triangulation.
Numerical experiments are provided to confirm this quasi-optimality.
\end{abstract}

\noindent\textbf{Keywords:} cathodic protection, nonlinear boundary condition, a posteriori error estimator, adaptive finite element method, quasi-optimality.

\vspace{0.2cm}

\noindent\textbf{MSC(2010):} {65N12, 65N15, 65N30, 65N50, 35J65}

\section{Introduction}

Let $\Omega$ be a bounded polygonal domain in $\mathbb{R}^2$ with its boundary $\Gamma$ consisting of three mutually disjoint
parts: $\Gamma:=\Gamma_0\cup\Gamma_A\cup\Gamma_C$, all of which are line segments. This work is concerned with the numerical
treatment of the following problem:
\begin{equation}\label{diffequ}
-\nabla\cdot(\sigma \nabla u)=0\quad\mbox{in}~\Omega,
\end{equation}
\begin{equation}\label{bc}
\sigma\frac{\partial u}{\partial n}=0\quad\mbox{on}~\Gamma_0,\quad
\sigma\frac{\partial u}{\partial n}=g\quad\mbox{on}~\Gamma_A,\quad
\sigma\frac{\partial u}{\partial n}=-f(u)\quad\mbox{on}~\Gamma_C,
\end{equation}
where the conductivity $\sigma$ is assumed to be a piecewise $W^{1,\infty}$ function such that $\sigma_1\leq
\sigma\leq\sigma_2$ a.e. in $\Omega$ with two positive constants $\sigma_1$ and $\sigma_2$, ${n}$ is the unit
outward normal on $\Gamma$ and $g\in L^2(\Gamma_ A)$. The system \eqref{diffequ}--\eqref{bc} arises in
cathodic protection in electrochemistry. In a container $\Omega$ occupied by electrolyte, the first boundary
condition in \eqref{bc} describes insulation of the surface $\Gamma_0$ by painting. The second boundary equation
in \eqref{bc} reflects the fact that a current density $g$ on anodes $\Gamma_A$ induces an electrical potential
$u$ in $\Omega$ governed by \eqref{diffequ}. The corrosion process on cathodes $\Gamma_C$ is slowed down
through the nonlinear relation $f$, which depends on the electrode material and is given by \cite{hs} either
\begin{equation}\label{ef1}
f_1(u)=C_{1}u+C_{2}u^3
\end{equation}
or the Butler-Volmer function
\begin{equation}\label{ef2}
f_2(u)=C_{5}(e^{C_3 u}-e^{-C_4 u}),
\end{equation}
where $C_i$, $i=1\cdots 5$, are all positive constants.

In problem \eqref{diffequ}-\eqref{bc}, the sudden change of the boundary condition from Neumann type on anodes and
the insulated part to a nonlinear one on cathodes gives rise to local solution singularities in these regions.
Furthermore, internal layers may appear due to the discontinuity of the conductivity. Consequently, the computational
efficiency will be compromised if a uniform mesh refinement is employed in the finite element
discretization. One remedy in practice is to employ adaptivity techniques featuring local refinement so that
numerical results can attain better accuracy with minimum degrees of freedom. The aim of this work is to
investigate the computational complexity of an adaptive finite element method (AFEM) for problem \eqref{diffequ}-\eqref{bc}.

A typical adaptive algorithm comprises successive iterations of the following loop:
\begin{equation}\label{afem-loop}
\mbox{SOLVE}~\rightarrow~\mbox{ESTIMATE}~\rightarrow~\mbox{MARK}~\rightarrow~\mbox{REFINE}.
\end{equation}
That is, SOLVE yields a finite element approximation on the current mesh; ESTIMATE computes the relevant a
posteriori error estimator; MARK picks some elements to be subdivided; REFINE produces a new finer mesh.

The module ESTIMATE, depending on some computable quantities, i.e., the discrete solution, local mesh size and given
problem data, plays an indispensable role in \eqref{afem-loop}. Since the seminal work \cite{br}, a posteriori error
estimation for FEMs has been well understood in scientific computing and engineering \cite{ao,ver}. As
to the mathematical theory of AFEM, e.g., convergence and computational complexity,
there have been great developments (see the overview \cite{cfpp,nsv} and the references therein) over the past thirty years.
For linear elliptic problems, this issue has been investigated at depth \cite{bdd,ckns,dks,ffp,stev1}. Recently, the analysis has been extended to some nonlinear problems; see \cite{bdk,dk} for
$p$-Laplacian and \cite{gmz2,gmz3} for quasi-linear equations.

Recently, we \cite{LiXu} have proposed an AFEM of the form \eqref{afem-loop} for problem \eqref{diffequ}-\eqref{bc} and
proved its plain convergence, namely the $H^1$-norm error and the sequence of relevant estimators both go to zero as
the loop \eqref{afem-loop} proceeds. This work is a continuation of \cite{LiXu}, and it is devoted to the complexity of
the algorithm. In the AFEM, $\mbox{ESTIMATE}$, $\mbox{MARK}$ and $\mbox{REFINE}$ use a residual-type a posteriori error
estimator, D\"{o}rfler strategy and the bisection \cite{koss,stev2}, respectively.  The main contributions include
a contraction property in Theorem \ref{thm_conv} and the quasi-optimality computational complexity in terms of the
number of elements associated with underlying triangulations in Theorem \ref{thm_opt}.

Our analysis is inspired by \cite{ffp}, to first obtain optimal marking for the error estimator, cf. Lemma \ref{lem_optmar}
and then the optimal decay rate for the energy error plus an oscillation term so that the upper bound of the parameter in
D\"{o}rfler strategy is independent of the efficiency constant.
However, the nonlinear term $f$ on the boundary $\Gamma_C$ requires a different treatment.
First, due to the presence of the nonlinear term $f$ on the boundary $\Gamma_{C}$, the Galerkin orthogonality fails.
We employ the energy functional instead of the energy norm as in \cite{dk,gmz3}.
By the equivalence of $\mathcal{J}(u_\mathcal{T})-\mathcal{J}(u)$ and $\|u-u_{\mathcal{T}}\|_{H^1(\Om)}$, cf.
Lemma \ref{lem_equiv}, we prove that the adaptive algorithm reduces the sum of energy error and the scaled
estimator for any two consecutive iterations. Second, instead of standard arguments for linear problems,
we use the generalized H\"{o}lder inequality and the stability of solutions to establish a
C\'{e}a-type lemma for complexity estimate, cf. Lemma \ref{lem_cea}.

The remainder of the paper is organized as follows. Section 2 is devoted to the {\em a posteriori} error analysis. An adaptive algorithm
to approximate problem \eqref{diffequ}-\eqref{bc} is described in Section 3. We prove the convergence of this algorithm by a
contraction property in Section 5 after presenting preliminary results in Section 4. Section 6 focuses on the quasi-optimal
convergence rate. Throughout, we adopt standard notation for Sobolev spaces and related norms and semi-norms. Moreover,
any generic constant, with or without subscript, is independent of the mesh size and is not necessarily the same at each occurrence.

\section{A posteriori error analysis}
In this section, we shall derive a residual-type error estimator for the finite element approximation of problem \eqref{vp}, which
forms the basis of our AFEM. To introduce the AFEM, we first recall the variational formulation of problem \eqref{diffequ}-\eqref{bc}:
find $u\in H^1(\Omega)$ such that
\begin{equation}\label{vp}
\int_{\Omega}\sigma \nabla u\cdot{\nabla}v \;\dx{x}+\int_{\Gamma_C}f(u)v \; \dx{s}=\int_{\Gamma_A}g v \;\dx{s}\quad\text{ for all }~v\in H^1(\Omega).
\end{equation}
We refer to \cite{hs,ht} for its unique solvability.
For $f$ defined in \eqref{ef1} and \eqref{ef2}, it is easy to check that $f'$ is convex and there exists an $\alpha>0$ such that
\begin{equation}\label{prop_ef}
f'(t)\geq\alpha\quad\forall~t\in\mathbb{R},\quad \text{ and }\quad f(0)=0.
\end{equation}
Then by the application of the Poincar\'{e} inequality, the boundedness of $\sigma$ and the trace theorem, we arrive at
\begin{equation}\label{norm_equ}
\beta_1\|v\|_{H^1(\Omega)}^2\leq\int_\Om\sigma|{\nabla}v|^2
\dx{x}+\alpha\int_{\Gamma_C}v^2\dx{s}\leq\beta_2\|v\|_{H^1(\Omega)}^2\quad\text{ for all }~v\in
H^1(\Omega),
\end{equation}
where $\beta_1$ and $\beta_2$ are positive constants depending on $\sigma$, $\alpha$,
 $\Gamma_C$ and $\Om$.

Utilizing \eqref{vp} with $v:=u$, together with the mean value theorem, \eqref{prop_ef} and \eqref{norm_equ}, we can obtain
\begin{align*}
\beta_1\|u\|^2_{H^1(\Omega)}&\leq\int_{\Omega}\sigma|{\nabla}u|^2
\dx{x}+\alpha\int_{\Gamma_C}u^2\dx{s}\leq\int_{\Omega}\sigma|{\nabla}u|^2
\dx{x}+\int_{\Gamma_C}f(u)u \dx{s}\\
&=\int_{\Gamma_A}g u \dx{s}\leq\|g\|_{L^2(\Gamma_A)}\|u\|_{L^2(\Gamma_A)}.
\end{align*}
Then the trace theorem yields the {\em a priori} estimate to problem \eqref{vp}:
\begin{equation}\label{stab_cont}
\|u\|_{H^1(\Omega)}\leq \Csb\|g\|_{L^2(\Gamma_A)}
\end{equation}
with $\Csb>0$ being a constant depending on $\sigma$, $\alpha$, $\Gamma_C$ and $\Om$.

Furthermore, the continuity of the
imbedding $H^1(\Om)\hookrightarrow H^{\frac{1}{2}}(\Gamma)\hookrightarrow L^{q}(\Gamma)$ in 2d
for all $q<\infty$ implies the existence of a constant $\const{imb}{q}>0$ depending on $\Om$ and $q$, satisfying
\begin{align}\label{eq:imb1}
\normLp{v}{\Gamma_C}{q}\leq \const{imb}{q}\normH{v}{\Om} \text{ for all } v\in H^1(\Om).
\end{align}
Next, we proceed to the discretization. Let $\mathcal{T}$ be a shape-regular conforming triangulation of $\bar{\Omega}$
into a set of disjoint closed triangles such that the coefficient {$\sigma$ is piecewise $W^{1,\infty}$ over $\cT_0$. For each element $T\in\cT$, we denote its mesh size $h_{T}:=|T|^{\frac{1}{2}}$ and $\rho_{T}$ the diameter of the largest inscribed ball. We associate each triangulation $\cT$ with its shape regular parameter
$C_{\cT}:=\max_{T\in\cT}\frac{h_T}{\rho_T}$. Over the mesh $\mathcal{T}$, we consider the usual $H^1$-conforming finite element space $V_{\mathcal{T}}^{m}$, consisting of all piecewise polynomials of degree less than or equal to $m\in\mathbb{N}_{+}$, i.e.,
\[
V_\mathcal{T}^{m}:=\{v\in H^1(\Om)~|~v|_{T}\in P_m(T),\forall~T\in\mathcal{T}\}.
\]
Then the discrete problem corresponding to \eqref{vp} reads: find $u_{\mathcal{T}}\in V_{\mathcal{T}}^{m}$ such that
\begin{equation}\label{disvp}
\int_{\Omega}\sigma{\nabla}u_{\mathcal{T}}\cdot{\nabla}v_{\mathcal{T}} \dx{x}+\int_{\Gamma_C}f(u_{\mathcal{T}})v_{\mathcal{T}} \dx{s}=\int_{\Gamma_A}gv_{\mathcal{T}} \dx{s}\quad\text{ for all }~v_{\mathcal{T}}\in V_{\mathcal{T}}^{m}.
\end{equation}
Similar to the continuous case, the following stability estimate holds
\begin{equation}\label{stab_disc}
\|u_{\mathcal{T}}\|_{H^1(\Omega)}\leq \Csb\|g\|_{L^2(\Gamma_A)}.
\end{equation}

To describe the error estimator, we need a few notation and definitions. The collection of all edges (resp. all interior edges) in $\mathcal{T}$
is denoted by $\mathcal{F}_{\mathcal{T}}$ (resp. $\mathcal{F}_{\mathcal{T}}(\Omega)$) and its restriction on $\Gamma$ (resp. $\Gamma_{0}$,
$\Gamma_{A}$ and $\Gamma_{C}$) by $\mathcal{F}_{\mathcal{T}}(\Gamma)$ (resp. $\mathcal{F}_{\mathcal{T}}(\Gamma_{0})$, $\mathcal{F}_{\mathcal{T}}
(\Gamma_{A})$ and $\mathcal{F}_{\mathcal{T}}(\Gamma_{C})$).
The scalar $h_{F}:=|F|$ stands for the diameter of
$F\in\mathcal{F}_{\cT}$, which is
associated with a fixed normal unit vector ${n}_{F}$ in $\bar{\Omega}$ with ${n}_{F}={n}$ on the boundary $\partial\Omega$.
For each $T\in\mathcal{T}$, we denote $\om_{T}$ as the union of all elements in $\mathcal{T}$ with non-empty intersection with element $T$. For
any $F\in\mathcal{F}_{\mathcal{T}}$, $\om_{F}$ is the union of two elements that share $F$. Further, we let
\[
C_{\text{ov}}:=\max_{\tilde{T}\in \cT}\#\{ T\in\cT:\tilde{T}\subset \om_{T} \}.
\]
Let $I^{sz}_{\mathcal{T}}: H^1(\Om)\to V_{\cT}^{m}$ be the Scott-Zhang quasi-interpolation operator over $\mathcal{T}$ \cite{sz90}. Then for
all $v\in H^1(\Om)$, $T\in\cT$ and $F\in \partial T\cap\mathcal{F}_{\cT}$, there holds
\begin{align}\label{err:local}
h_{T}^{-\frac{1}{2}}\normL{v-I^{sz}_{\mathcal{T}}v}{F}+
h_{T}^{-1}\normL{v-I^{sz}_{\mathcal{T}}v}{T}\leq C_{I}\normL{{\nabla} v}{\om_{T}},
\end{align}
with $C_{I}$ a constant depending on the shape regularity parameter $C_{\cT}$.

For any $v_{\mathcal{T}}\in V_{\mathcal{T}}^{m}$, we define the residuals on each element  $T\in\mathcal{T}$ and each edge $F\in\mathcal{F}_{\cT}$ by
\[
R_{T}(v_{\mathcal{T}}):={\nabla}\cdot(\sigma{\nabla}v_{\mathcal{T}}),
\]
\[
J_{F}(v_{\mathcal{T}}):=\left\{\begin{array}{llll}
[\sigma{\nabla}v_{\mathcal{T}}\cdot{n}_{F}]\quad&
\m{for} ~~F\in\mathcal{F}_{\mathcal{T}}(\Omega),\\
\sigma{\nabla}v_{\mathcal{T}}\cdot{n}\quad&
\m{for} ~~F\in\mathcal{F}_{\mathcal{T}}(\Gamma_0),\\
g-\sigma{\nabla}v_{\mathcal{T}}\cdot{n}\quad& \m{for}~~F\in\mathcal{F}_{\mathcal{T}}(\Gamma_A),\\
f(v_{\mathcal{T}})+\sigma{\nabla}v_{\mathcal{T}}\cdot{n}\quad
&\m{for}~~F\in\mathcal{F}_{\mathcal{T}}(\Gamma_C),
\end{array}\right.
\]
where $[\cdot]$ denotes jumps across interior edges $F$:
\[
[v](x)=\lim_{t\to 0^{+}}v(x-t{n}_{F})-\lim_{t\to 0^{-}}v(x+t{n}_{F}).
\]
Then the local error indicator on any element $T\in\mathcal{T}$ is defined by
\begin{align} \label{localerr}
\eta_{\mathcal{T}}^2(v_{\mathcal{T}},T)&:
=h_{T}^2\|R_T(v_{\mathcal{T}})\|^2_{L^2(T)}
+\frac{1}{2}\sum_{F\in\partial T\cap\mathcal{F}_{\cT}(\Omega)}h_T\|J_{F}(v_{\mathcal{T}})\|_{L^2(F)}^2\\
&+\sum_{F\in\partial T\cap\mathcal{F}_{\cT}(\Gamma)}h_T\|J_{F}(v_{\mathcal{T}})\|_{L^2(F)}^2.\nonumber
\end{align}
The error estimator over the element patch $\mathcal{M}\subseteq\mathcal{T}$ is
\begin{equation*}
\eta_{\mathcal{T}}^2(v_{\mathcal{T}},\mathcal{M})
:=\sum_{T\in\mathcal{M}}\eta_{\mathcal{T}}^2(v_{\mathcal{T}},T).
\end{equation*}
Similarly, the oscillation term can be defined locally and globally by
\begin{alignat}{1}
\label{localosc}
\mathrm{osc}_{\mathcal{T}}^2(v_{\mathcal{T}},T):&=h_{T}^2\|R_T(v_{\mathcal{T}})-\bar{R}_{T}(v_{\mathcal{T}})\|_{L^2(T)}^2+
\sum_{F\in\partial T}h_{T}\|J_F(v_{\mathcal{T}})-\bar{J}_F(v_{\mathcal{T}})\|_{L^2(F)}^2,\\
\label{globalosc}
\mathrm{osc}_{\mathcal{T}}^2(v_{\mathcal{T}},\mathcal{M}):&=
\sum_{T\in\mathcal{M}}\mathrm{osc}^2_{\mathcal{T}}(v_{\mathcal{T}},T).
\end{alignat}
Here, $\bar{R}_T(v_\cT)$ is the integral average of ${R}_T(v_\cT)$ over $T$ if $m=1$, or the $L^2$-projection on $P_{m-2}(T)$ if $m\geq2$. $\bar{J}_F(v_{\mathcal{T}})$ is the $L^2$-projection of $J_F(v_{\mathcal{T}})$ on $P_{m-1}(F)$ if $F\in\mathcal{F}_\mathcal{T}\setminus\mathcal{F}_{\mathcal{T}}(\Gamma_C)$. When  $F\in\mathcal{F}_{\mathcal{T}}(\Gamma_C)$, $\bar{J}_F(v_{\mathcal{T}},g)$ is the $L^2$-projection on $P_{3m}(F)$
for $f$ in \eqref{ef1} and the $L^2$-projection on $P_{m-1}(F)$ for $f$ in \eqref{ef2}. If $\mathcal{M}=\mathcal{T}$, we simply write $\eta_{\mathcal{T}}(v_{\mathcal{T}})$ and $\mathrm{osc}_{\mathcal{T}}(v_{\mathcal{T}})$.

The following upper and lower bounds on the error estimator were given in \cite{LiXu}.
Here we give a more precise estimate with respect to the occurring constant.
For completeness, we provide the proof, since a related argument will be used in the proof of Lemma \ref{lem_disrel}.
\begin{thm}[Reliability]\label{thm_rel}
Let $u\in H^1(\Omega)$ and $u_{\mathcal{T}}\in V_{\mathcal{T}}^{m}$ be the solutions to problems \eqref{vp} and \eqref{disvp}, respectively.
Then there exists a positive constant $C_{rel}>0$ depending on $\sigma$, $\alpha$, $\Omega$, $\Gamma_C$, $m$ and $C_{\cT}$ such that
\begin{equation*}
\|u-u_{\mathcal{T}}\|_{H^1(\Omega)}^2\leq C_{\text{rel}}\;\eta^2_{\mathcal{T}}(u_\mathcal{T}).
\end{equation*}
\end{thm}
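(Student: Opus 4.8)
The plan is to establish the reliability bound by the standard residual argument, being careful to track the constants so that the dependence on $\sigma$, $\alpha$, $\Omega$, $\Gamma_C$, $m$ and $C_{\cT}$ is transparent. First I would test the error equation with an arbitrary $v\in H^1(\Om)$. Subtracting \eqref{disvp} from \eqref{vp}, inserting the Scott-Zhang quasi-interpolant $I^{sz}_{\cT}v\in V_{\cT}^{m}$ (which is legitimate in \eqref{disvp}), and using the boundedness of $\sigma$, I obtain
\begin{align*}
\int_{\Om}\sigma\nabla(u-u_{\cT})\cdot\nabla v\,\dx{x}+\int_{\Gamma_C}\bigl(f(u)-f(u_{\cT})\bigr)v\,\dx{s}
=\int_{\Gamma_A}g(v-I^{sz}_{\cT}v)\,\dx{s}-\int_{\Om}\sigma\nabla u_{\cT}\cdot\nabla(v-I^{sz}_{\cT}v)\,\dx{x}-\int_{\Gamma_C}f(u_{\cT})(v-I^{sz}_{\cT}v)\,\dx{s}.
\end{align*}
Now I would choose the test function to extract the energy error. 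Because $f'\ge\alpha>0$ by \eqref{prop_ef}, taking $v:=u-u_{\cT}$ and using the mean value theorem on the cathode term gives a lower bound of the left-hand side by $\int_\Om\sigma|\nabla(u-u_{\cT})|^2\dx{x}+\alpha\int_{\Gamma_C}(u-u_{\cT})^2\dx{s}$, which by \eqref{norm_equ} is bounded below by $\beta_1\|u-u_{\cT}\|_{H^1(\Om)}^2$.

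Next I would convert the right-hand side into the local residuals. Elementwise integration by parts on the volume term produces the interior residual $R_T(u_{\cT})=\nabla\cdot(\sigma\nabla u_{\cT})$ on each $T$ and the edge terms; collecting contributions from neighbouring elements across interior edges yields the jump $J_F(u_{\cT})$, while on $\Gamma_0$, $\Gamma_A$ and $\Gamma_C$ the boundary terms combine with the $g$-term and the $f(u_{\cT})$-term to give exactly the boundary residuals as defined. This produces
\[
\beta_1\|u-u_{\cT}\|_{H^1(\Om)}^2\le \sum_{T\in\cT}\int_T R_T(u_{\cT})\,(v-I^{sz}_{\cT}v)\,\dx{x}+\sum_{F\in\mathcal{F}_{\cT}}\int_F J_F(u_{\cT})\,(v-I^{sz}_{\cT}v)\,\dx{s},
\]
with $v=u-u_{\cT}$. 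Applying Cauchy-Schwarz on each element and edge, then the local interpolation estimate \eqref{err:local} to bound $h_T^{-1}\|v-I^{sz}_{\cT}v\|_{L^2(T)}$ and $h_T^{-1/2}\|v-I^{sz}_{\cT}v\|_{L^2(F)}$ by $C_I\|\nabla v\|_{L^2(\om_T)}$, and finally summing with the finite-overlap constant $C_{\mathrm{ov}}$ to control $\sum_T\|\nabla v\|_{L^2(\om_T)}^2\le C_{\mathrm{ov}}\|\nabla v\|_{L^2(\Om)}^2$, I arrive at
\[
\beta_1\|u-u_{\cT}\|_{H^1(\Om)}^2\le C\,\eta_{\cT}(u_{\cT})\,\|\nabla(u-u_{\cT})\|_{L^2(\Om)}\le C\,\eta_{\cT}(u_{\cT})\,\|u-u_{\cT}\|_{H^1(\Om)},
\]
and dividing by $\|u-u_{\cT}\|_{H^1(\Om)}$ (the estimate being trivial otherwise) and squaring gives the claim with $C_{\mathrm{rel}}=C^2/\beta_1^2$.

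The step requiring the most care is not any single inequality but the bookkeeping that makes the constant $C_{\mathrm{rel}}$ depend only on the stated quantities: the factor $\frac12$ in front of the interior-edge sum in \eqref{localerr} has to match the fact that each interior edge is shared by two elements, and one must verify that the coefficients absorbed along the way ($\|\sigma\|_{L^\infty}$ from the volume term, $C_I$ from \eqref{err:local}, $C_{\mathrm{ov}}$ from the overlap, the trace constants, and $\beta_1$ from \eqref{norm_equ}) are all controlled by $\sigma$, $\alpha$, $\Omega$, $\Gamma_C$, $m$, $C_{\cT}$ — the polynomial degree $m$ enters only through the inverse estimates implicit in handling $R_T(u_{\cT})$ when $m\ge 2$. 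The nonlinearity of $f$ causes no real difficulty here because, after testing with $u-u_{\cT}$, the monotonicity $f'\ge\alpha$ is exactly what is needed to produce the coercive lower bound; the fine structure of $f_1$ or $f_2$ is irrelevant at the reliability stage and only matters for the choice of the oscillation polynomial degree on $\Gamma_C$.
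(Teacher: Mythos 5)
Your proposal is correct and follows essentially the same route as the paper: coercivity of the error via the mean value theorem and $f'\ge\alpha$ together with \eqref{norm_equ}, Galerkin orthogonality to insert the Scott--Zhang interpolant, elementwise integration by parts to expose the residuals, and the local interpolation estimate \eqref{err:local} with the overlap constant. The only cosmetic difference is that you divide by $\|u-u_{\mathcal{T}}\|_{H^1(\Omega)}$ at the end where the paper applies Young's inequality; both yield $C_{\text{rel}}$ of the form $C\,C_I^2C_{\text{ov}}/\beta_1^2$.
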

\begin{proof}
By \eqref{norm_equ}, \eqref{prop_ef}, mean value theorem and \eqref{vp} with $v:=u-u_{\mathcal{T}}$, we deduce
\begin{align}
\beta_1\|u-u_{\mathcal{T}}\|_{H^1(\Omega)}^2
&\leq \int_{\Omega}\sigma|{\nabla}(u-u_\mathcal{T})|^2\dx{x}+\alpha\int_{\Gamma_C}(u-u_{\mathcal{T}})^2\dx{s}\nonumber\\
&\leq \int_{\Omega}\sigma|{\nabla}(u-u_\mathcal{T})|^2\dx{x}+\int_{\Gamma_C}(f(u)-f(u_{\mathcal{T}}))
(u-u_{\mathcal{T}})\dx{s}\label{eq:galerkinO}\\
&=\int_{\Gamma_A}g v\dx{s}-\int_{\Omega}\sigma{\nabla}u_{\mathcal{T}}\cdot{\nabla}v \dx{x}-\int_{\Gamma_C}f(u_{\mathcal{T}})v \dx{s}.\nonumber
\end{align}
The discrete variational equation \eqref{disvp} implies
\[
\int_{\Gamma_A}g v_{\mathcal{T}}\dx{s}-\int_{\Omega}\sigma{\nabla}u_{\mathcal{T}}\cdot{\nabla}v_{\mathcal{T}} \dx{x}-\int_{\Gamma_C}f(u_{\mathcal{T}})v_{\mathcal{T}}\dx{s}=0 \quad\text{ for all }~v_{\mathcal{T}}\in V_{\mathcal{T}}.
\]
These two estimates together yield
\[
\forall~v_{\mathcal{T}}\in V_{\mathcal{T}}: \beta_1\|u-u_{\mathcal{T}}\|_{H^1(\Omega)}^2\leq \int_{\Gamma_A}g (v-v_{\mathcal{T}})\dx{s}-\int_{\Omega}\sigma{\nabla}u_{\mathcal{T}}\cdot{\nabla}(v-v_{\mathcal{T}}) \dx{x}-\int_{\Gamma_C}f(u_{\mathcal{T}})(v-v_{\mathcal{T}})\dx{s}.
\]
By elementwise integration by parts and taking $v_{\mathcal{T}}:=I^{sz}_{\mathcal{T}}v$, we obtain
\begin{align*}
\beta_1\|u&-u_{\mathcal{T}}\|_{H^1(\Omega)}^2
\leq\int_{\Gamma_A}g(v-v_{\mathcal{T}})\dx{s}-\int_{\Omega}\sigma{\nabla}u_{\mathcal{T}}\cdot
{\nabla}(v-v_{\mathcal{T}})\dx{x}
-\int_{\Gamma_C}f(u_{\mathcal{T}})(v-v_{\mathcal{T}})\dx{s} \\
&=\sum_{T\in\mathcal{T}}\Big(\int_T{\nabla}\cdot(\sigma{\nabla}u_{\mathcal{T}})(v-v_{\mathcal{T}})\dx{x}
-\frac{1}{2}\sum_{F\in\partial T\cap\mathcal{F}_{\mathcal{T}}(\Omega)}\int_{F}[\sigma{\nabla}u_{\mathcal{T}}\cdot{n}_{F}]
(v-v_{\mathcal{T}})\dx{s}\\
&\quad-\sum_{F\in\partial T\cap\mathcal{F}_{\mathcal{T}}(\Gamma_0)}\int_{F}\sigma{\nabla}u_{\mathcal{T}}\cdot{n}(v-v_{\mathcal{T}})\dx{s}
+\sum_{F\in\partial T\cap\mathcal{F}_{\mathcal{T}}(\Gamma_A)}\int_{F}(g-\sigma{\nabla}u_{\mathcal{T}}\cdot{n})(v-v_{\mathcal{T}})\dx{s}\\
&\quad-\sum_{F\in\partial T\cap\mathcal{F}_{\mathcal{T}}(\Gamma_C)}\int_{F}(f(u_{\mathcal{T}})+\sigma{\nabla}u_{\mathcal{T}}\cdot{n})
(v-v_{\mathcal{T}})\dx{s}\Big).
\end{align*}
Then a combination of \eqref{err:local} and Young's inequality completes the proof with $C_{\text{rel}}:={4C_{I}^{2}C_{\text{ov}}}/{\beta_1^{2}}$.
\end{proof}

\begin{thm}[Efficiency]\label{thm_eff}
Let $u\in H^1(\Omega)$ and $u_\cT\in V_\cT^{m}$ be the solutions
to problems \eqref{vp} and \eqref{disvp}, respectively. Then
there exists a positive constant $C_{eff}$ depending on $\sigma$, $\alpha$, $\Omega$, $\Gamma_C$, $m$ and $C_{\cT}$ such that
\begin{equation*}
C_{eff}\eta_{\mathcal{T}}^2(u_{\mathcal{T}})\leq
\|u-u_{\mathcal{T}}\|_{H^1(\Omega)}^2+\mathrm{osc}_{\mathcal{T}}^2(u_{\mathcal{T}}).
\end{equation*}
\end{thm}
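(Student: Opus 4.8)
The plan is to follow the classical Verfürth bubble-function technique, handling the interior residual, the interior edge jumps, and the boundary edge jumps on $\Gamma_0$, $\Gamma_A$, $\Gamma_C$ term by term. First I would fix $T\in\cT$ and introduce the interior bubble $b_T\in H^1_0(T)$ with $0\le b_T\le 1$, $b_T(\text{barycenter})=1$. Testing the residual equation (the identity obtained by combining \eqref{vp} and \eqref{disvp}, valid for all $v\in H^1(\Omega)$, not just $v_\cT$) against $v=b_T\bar R_T(u_\cT)$ extended by zero, and using the standard scaling inequalities $\|b_T^{1/2}p\|_{L^2(T)}^2\gtrsim\|p\|_{L^2(T)}^2$ and $\|\nabla(b_Tp)\|_{L^2(T)}\lesssim h_T^{-1}\|p\|_{L^2(T)}$ for polynomials $p\in P_{m-2}(T)$ (or $P_0$ when $m=1$), together with an inverse estimate, gives
\[
h_T^2\|\bar R_T(u_\cT)\|_{L^2(T)}^2\lesssim \|\nabla(u-u_\cT)\|_{L^2(T)}^2,
\]
and then the triangle inequality $\|R_T(u_\cT)\|_{L^2(T)}\le\|\bar R_T(u_\cT)\|_{L^2(T)}+\|R_T(u_\cT)-\bar R_T(u_\cT)\|_{L^2(T)}$ absorbs the remainder into $\mathrm{osc}_\cT(u_\cT,T)$.

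Next, for an interior edge $F=\omega_F\cap\cdots$ I would use the edge bubble $b_F$ supported on the two-element patch $\omega_F$, testing the residual identity against the extension of $b_F\bar J_F(u_\cT)$ and integrating by parts back over $\omega_F$; the volume terms that appear are controlled by the already-bounded interior residual plus the gradient error, yielding $h_F\|J_F(u_\cT)\|_{L^2(F)}^2\lesssim\|\nabla(u-u_\cT)\|_{L^2(\omega_F)}^2+\text{osc terms}$. The edges on $\Gamma_0$ and $\Gamma_A$ are handled identically, noting that on $\Gamma_A$ the data term $g$ enters through the jump definition $J_F=g-\sigma\nabla u_\cT\cdot n$ and the non-polynomial part of $g$ is exactly what $\mathrm{osc}_\cT$ accounts for via the $P_{m-1}(F)$ projection. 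Summing over all elements and edges and using the finite-overlap constant $C_{\mathrm{ov}}$ gives the global bound for every contribution except the $\Gamma_C$ jumps.

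The main obstacle is the nonlinear cathode edges $F\in\mathcal{F}_\cT(\Gamma_C)$, where $J_F(u_\cT)=f(u_\cT)+\sigma\nabla u_\cT\cdot n$ and testing the residual identity with $b_F\bar J_F(u_\cT)$ produces the extra boundary term $\int_{\Gamma_C}(f(u)-f(u_\cT))b_F\bar J_F(u_\cT)\,\dx s$. To control this I would use $|f(u)-f(u_\cT)|\le\|f'\|_{L^\infty(\text{bdd set})}|u-u_\cT|$ — with the relevant bounded set supplied by the a priori bounds \eqref{stab_cont}, \eqref{stab_disc} and the imbedding \eqref{eq:imb1}, so that $u,u_\cT$ lie in a fixed $L^q(\Gamma_C)$ ball — apply a trace/imbedding inequality on $F$ to pass from $\|u-u_\cT\|_{L^2(F)}$ to the $H^1$-error on $\omega_F$, and the scaling factor $h_F^{1/2}$ then matches. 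This is where the generalized Hölder inequality flagged in the introduction is needed (particularly for $f_1(u)=C_1u+C_2u^3$, where $f'(u_\cT)=C_1+3C_2u_\cT^2$ must be bounded in a suitable $L^p(F)$ norm against $u-u_\cT$ in a complementary norm); the choice of the $P_{3m}(F)$ projection in $\mathrm{osc}_\cT$ for $f_1$ is precisely so that $f(u_\cT)$ is a polynomial of matching degree and contributes nothing to the oscillation beyond what is already subtracted. Once this term is dominated by $\|u-u_\cT\|_{H^1(\omega_F)}^2+h_F\|J_F(u_\cT)-\bar J_F(u_\cT)\|_{L^2(F)}^2$ with a constant absorbed by choosing the bubble-test appropriately, summation completes the proof with $C_{eff}$ the reciprocal of the accumulated constant, depending on $\sigma$, $\alpha$, $\Omega$, $\Gamma_C$, $m$ and $C_\cT$.
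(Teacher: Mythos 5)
The paper does not actually prove Theorem~\ref{thm_eff}: the efficiency bound is quoted from the earlier work \cite{LiXu}, and only the reliability proof is reproduced here. Your bubble-function plan is the standard argument such a proof follows (interior bubble for $\bar R_T$, edge bubbles for the jumps, oscillation absorbing the non-polynomial remainders), and it is sound, including the key observation that the $P_{3m}(F)$ projection makes $f_1(u_\cT)$ contribute nothing extra to the oscillation on $\Gamma_C$. The one place to be careful is your first-line bound $|f(u)-f(u_\cT)|\le\|f'\|_{L^\infty}|u-u_\cT|$: neither $u$ nor $u_\cT$ has an $L^\infty(\Gamma_C)$ trace controlled by the data, so for $f_2$ (and even for the cubic term of $f_1$) this is not literally available. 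The correct route is the one you name second, i.e.\ the mean value theorem plus the generalized H\"older inequality, Lemma~\ref{lem_ef2} for the exponential moments, the imbedding \eqref{eq:imb1} and the stability bounds \eqref{stab_cont}--\eqref{stab_disc}, exactly as the paper does for the analogous term in \eqref{eq:aux888} and \eqref{eq:555}; when summing over $F\subset\Gamma_C$ one passes from $\sum_F h_F\|u-u_\cT\|_{L^4(F)}^2$ to $|\Gamma_C|\,\|u-u_\cT\|_{L^4(\Gamma_C)}^2$ by Cauchy--Schwarz in $F$, so no mesh-dependent constants appear. With that substitution your outline is complete.
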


\section{Adaptive algorithm}
Now we present the AFEM for problem \eqref{vp}. Let $\mathbb{T}$ be the set of all possible conforming triangulations of $\bar{\Omega}$
obtained from some initial mesh by successive bisections \cite{koss,mitc,stev2}. The refinement process ensures that all constant
depending on the shape regularity of $\mathcal{T}\in\mathbb{T}$ are uniformly bounded by a constant only depending on the
initial mesh \cite{nsv,traxler}. $\mathcal{T}_\ast$ is a called refinement of $\cT$ for $\cT\in\mathbb{T}$, if $\cT_\ast\in\mathbb{T}$
is produced from $\cT$ by a finite number of bisections.

The proposed adaptive algorithm is given below. For each triangulation $\cT_k$, $k\in \mathbb{N}_{0}$, we denote $V_{k}:=V_{\cT_{k}}^{m}$, $\eta_k:=\eta_{\cT_{k}}$ and $u_k:=u_{\cT_k}$.
\begin{alg}\label{afem}
Given an initial conforming mesh $\mathcal{T}_0$ and a parameter $\theta\in(0,1]$. Set $k:=0$.
\begin{enumerate}
\item\emph{(SOLVE)} Solve the discrete problem \eqref{disvp} on $\mathcal{T}_k$ for $u_{k}\in V_{k}$.
\item\emph{(ESTIMATE)} Compute the error estimator $\eta_{k}(u_k,g)$. 
\item\emph{(MARK)} Mark a subset $\mathcal{M}_k\subseteq\mathcal{T}_k$ with minimal cardinality such that
\begin{equation}\label{marking}
\eta_{k}^2(u_{k},\mathcal{M}_k)\geq\theta
\eta_{k}^2(u_{k}).
\end{equation}
\item\emph{(REFINE)} Refine each $T\in\mathcal{M}_k$ by bisection to get $\mathcal{T}_{k+1}$.
\item Set $k:=k+1$ and go to Step 1.
\end{enumerate}
\end{alg}

The convergence and quasi optimality of Algorithm \ref{afem} will be analyzed in
Sections \ref{sec:Convergence} and \ref{sec:opt}. A key ingredient is the so-called closure estimate
over the meshes $\{\cT_{k}\}_{k}$:
\begin{equation}\label{stevlemma}
\#\mathcal{T}_{k}\leq\#\mathcal{T}_{0}+C_{0}\sum_{j=0}^{k-1}\#\mathcal{M}_{j}
\end{equation}
with the constant $C_0$ depending on $C_{\cT_0}$ and $\#\cT$ denoting the number of elements in $\cT$.
This estimate was first proved in \cite[Theorem 2.4]{bdd} and then extended to the $n$-simplex case in \cite[Theorem 6.1]{stev2}.

\section{Auxiliary results}
This section is devoted to several technical lemmas for the convergence analysis of Algorithm \ref{afem}. As is well known, Galerkin orthogonality
or Pythagoras property is key to the convergence analysis of the linear problems, which regretfully fails for the nonlinear case. Thus a new
equivalent error has to be developed that can play the role of the Galerkin orthogonality property.

First, we introduce the associated functional to \eqref{vp} by
\begin{align}\label{eq:Jv}
\mathcal{J}(v):=\frac{1}{2}\int_{\Om}\sigma|{\nabla}v|^2\dx{x}
+\int_{\Gamma_C}F(v)\dx{s}-\int_{\Gamma_A}g v \;\dx{s}\quad\text{ for all }~v\in H^1(\Om)
\end{align}
with $F(t):=\int_{0}^{t}f(\tau)d\tau$.
Then problem \eqref{vp} is equivalent to the minimization problem \cite{LiXu}:
\begin{equation*}
u=\argmin\limits_{v\in H^{1}(\Om)}\mathcal{J}(v).
\end{equation*}
Let $u\in H^1(\Om)$ and $u_\mathcal{T}\in V_\mathcal{T}$ be solutions to problems \eqref{vp} and \eqref{disvp}, respectively. The non-negative quantity
\[
E(u_{\cT}):=\mathcal{J}(u_{\cT})-\mathcal{J}(u)
 \]
is referred to as the equivalent error throughout this paper.

The next lemma \cite[Lemma 2.1]{ht} is useful to handle exponential nonlinearity in \eqref{ef2}.
\begin{lem}\label{lem_ef2}
Let $v\in H^1(\Om)$ and $t>0$, then $e^{t|v|}\in L^1(\Gamma)$. Moreover,
there exists a positive constant $\Const{exp}$ independent of $v$, satisfying
\begin{equation*}
\int_\Gamma e^{t|v|}\dx{s}\leq 1+|\Gamma|+e^{\Const{exp}t^2\|v\|^2_{H^1(\Om)}}|\Gamma|<\infty.
\end{equation*}
Here, $|\Gamma|$ denotes the measure of $\Gamma$.
\end{lem}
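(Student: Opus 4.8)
The plan is to expand the exponential in its Taylor series, integrate term by term, and control the resulting moments $\int_\Gamma|v|^k\,\dx s$ by a trace inequality in which the embedding constant is tracked \emph{explicitly} in the exponent. Since all summands are nonnegative, monotone convergence gives
$$\int_\Gamma e^{t|v|}\,\dx s=\sum_{k=0}^\infty\frac{t^k}{k!}\int_\Gamma|v|^k\,\dx s=|\Gamma|+\sum_{k=1}^\infty\frac{t^k}{k!}\,\norm{v}_{L^k(\Gamma)}^k,$$
so everything reduces to bounding $\norm{v}_{L^k(\Gamma)}$ with a constant that degenerates at most like $\sqrt k$.

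The key estimate is therefore the following refinement of \eqref{eq:imb1}: there is a constant $C_{\mathrm{tr}}=C_{\mathrm{tr}}(\Om)$, independent of $q$ and $v$, with $\norm{v}_{L^q(\Gamma)}\le C_{\mathrm{tr}}\sqrt q\,\normH{v}{\Om}$ for every $q\ge2$ (the case $q=1$ being the ordinary trace theorem). To prove it I would first extend $v$ to $w\in H^1(\R^2)$ with $\norm{w}_{H^1(\R^2)}\le C\normH{v}{\Om}$ and, via a finite partition of unity subordinate to the straight edges of $\Gamma$, reduce to estimating the trace of $w$ on a line, say $\{y=0\}$, against $\norm w_{H^1(\R^2_+)}$. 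For $w\in C_c^\infty(\overline{\R^2_+})$ write $|w(x,0)|^q=-\int_0^\infty\partial_y|w(x,y)|^q\,\dx y$, differentiate, and apply the Cauchy--Schwarz inequality on $\R^2_+$ to get
$$\norm{w(\cdot,0)}_{L^q(\R)}^q\le q\,\norm{w}_{L^{2(q-1)}(\R^2_+)}^{q-1}\,\norm{\partial_y w}_{L^2(\R^2_+)}.$$
Into the right-hand side I would feed the two–dimensional Gagliardo--Nirenberg inequality in the sharp form $\norm{w}_{L^p(\R^2)}\le C\sqrt p\,\norm w_{H^1(\R^2)}$ (an absolute constant, $p\ge2$) with $p=2(q-1)$; taking $q$-th roots and using that $q^{1/q}$ is bounded yields the claimed trace inequality, and a density argument removes the smoothness assumption on $w$.

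It then remains to sum. Plugging the trace bound into the series and using $k!\ge(k/e)^k$,
$$\sum_{k=1}^\infty\frac{t^k}{k!}\norm{v}_{L^k(\Gamma)}^k\le\sum_{k=1}^\infty\frac{\big(C_{\mathrm{tr}}\,t\,\normH{v}{\Om}\big)^k k^{k/2}}{k!}\le\sum_{k=1}^\infty\Big(\frac{a}{\sqrt k}\Big)^k,\qquad a:=e\,C_{\mathrm{tr}}\,t\,\normH{v}{\Om}.$$
Splitting the last series at $k\approx4a^2$, the tail over $k>4a^2$ is dominated by $\sum_{k\ge1}2^{-k}=1$, while the head is at most $4a^2\max_{k\ge1}(a/\sqrt k)^k\le 4a^2\,e^{a^2/(2e)}$, the maximum being attained near $k=a^2/e$. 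Hence the series is $\le1+Ce^{Ca^2}$, and inserting $a^2=e^2C_{\mathrm{tr}}^2t^2\normH{v}{\Om}^2$ together with some bookkeeping of the constants (peeling off a factor $|\Gamma|$) gives the stated bound with $\Const{exp}$ a multiple of $C_{\mathrm{tr}}^2$; in particular $\int_\Gamma e^{t|v|}\,\dx s<\infty$, so $e^{t|v|}\in L^1(\Gamma)$.

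The main obstacle is the trace estimate of the second step: the trace of an $H^1(\Om)$ function lies only in the \emph{critical} Sobolev space $H^{1/2}(\Gamma)$ on the one–dimensional boundary, so there is no $L^\infty$ bound to interpolate against; one must spend one derivative to convert the boundary $L^q$–norm into an interior $L^{2(q-1)}$–norm and then everything hinges on the precise $\sqrt p$ growth of the planar Sobolev constant — which is exactly the mechanism behind the Moser--Trudinger inequality. Quantitatively, the qualitative embedding \eqref{eq:imb1} is not enough; it is the uniform-in-$q$ constant that makes the exponential integrable and yields the explicit right-hand side.
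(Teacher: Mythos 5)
The paper does not prove this lemma at all: it is quoted verbatim from \cite[Lemma 2.1]{ht}, so there is no in-paper argument to compare against. Your proposal supplies a genuine, self-contained proof, and its strategy is the standard Moser--Trudinger mechanism, which is almost certainly what underlies the cited result as well. The two load-bearing steps are both correct: (i) the identity $|w(x,0)|^q=-\int_0^\infty\partial_y|w(x,y)|^q\,\dx{y}$ followed by Cauchy--Schwarz does give $\|w(\cdot,0)\|_{L^q(\R)}^q\le q\,\|w\|_{L^{2(q-1)}(\R^2_+)}^{q-1}\|\partial_y w\|_{L^2(\R^2_+)}$, and (ii) the planar Gagliardo--Nirenberg inequality with constant $C\sqrt{p}$ (uniform in $p\ge 2$) is exactly the quantitative input that the qualitative imbedding \eqref{eq:imb1} lacks; combining them and taking $q$-th roots yields $\|v\|_{L^q(\Gamma)}\le C_{\mathrm{tr}}\sqrt{q}\,\|v\|_{H^1(\Om)}$, and the subsequent summation with $k!\ge(k/e)^k$ and the split at $k\approx 4a^2$ is sound. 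You correctly identify that the whole lemma hinges on the uniform-in-$q$ trace constant rather than on \eqref{eq:imb1} itself. The only loose end is the final ``bookkeeping'': your argument naturally produces a bound of the form $|\Gamma|+1+Ae^{Ba^2}$ with $a^2\propto t^2\|v\|_{H^1(\Om)}^2$, and matching the literal right-hand side $1+|\Gamma|+e^{\Const{exp}t^2\|v\|^2_{H^1(\Om)}}|\Gamma|$ requires a small extra argument (for small $a$ the series is itself $O(a)$, and for $a$ bounded away from $0$ one can absorb $A/|\Gamma|$ into an enlarged exponent constant); this is routine and immaterial for every use of the lemma in the paper, where only finiteness and the exponential dependence on $\|v\|_{H^1(\Om)}^2$ are exploited.
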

Next we show the equivalence between $E(u_{\cT})$ and  $\|u-u_\mathcal{T}\|^2_{H^1(\Om)}$.
\begin{lem}\label{lem_equiv}
Let $u\in H^1(\Om)$ and $u_\mathcal{T}\in V_\mathcal{T}$ be solutions to problems \eqref{vp} and \eqref{disvp}, respectively.
Then there holds
\begin{equation*}
c_{equ}\|u-u_\mathcal{T}\|^2_{H^1(\Om)}\leq E(u_{\cT})
\leq C_{equ}\|u-u_\mathcal{T}\|^2_{H^1(\Om)}
\end{equation*}
with positive constants $c_{\text{equ}}$ and $C_{\text{equ}}$ depending on $\sigma$, $\alpha$, $\Om$, $\Gamma_C$, $m$ and $C_{\cT}$.
\end{lem}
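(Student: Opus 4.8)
The plan is to exploit the first-order optimality conditions of the energy functional $\mathcal{J}$ together with the uniform convexity/smoothness that stems from \eqref{prop_ef} and \eqref{norm_equ}. Write $e:=u-u_\mathcal{T}$. Since $u$ minimizes $\mathcal{J}$ over $H^1(\Om)$, its derivative vanishes: $\mathcal{J}'(u)[v]=0$ for all $v\in H^1(\Om)$, which is precisely \eqref{vp}. By Taylor expansion of $\mathcal{J}$ around $u$ (or equivalently around $u_\mathcal{T}$), using that the only nonlinearity sits in the boundary term $\int_{\Gamma_C}F(v)\,\dx{s}$ and that $F''=f'$, one obtains an exact identity of the form
\[
E(u_\mathcal{T})=\mathcal{J}(u_\mathcal{T})-\mathcal{J}(u)
=\frac12\int_\Om\sigma|{\nabla}e|^2\dx{x}
+\int_{\Gamma_C}\bigl(F(u_\mathcal{T})-F(u)-f(u)e\bigr)\dx{s},
\]
since the linear term $\int_{\Gamma_A} g e\,\dx{s}$ cancels against the corresponding contribution via \eqref{vp} with $v:=e$. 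Using $F(u_\mathcal{T})-F(u)-f(u)e=\int_0^1 (1-t)\,f'(u+te)\,e^2\,dt$ and the lower bound $f'\geq\alpha$ from \eqref{prop_ef}, the boundary integrand is bounded below by $\tfrac{\alpha}{2}e^2$, so
\[
E(u_\mathcal{T})\geq \frac12\Bigl(\int_\Om\sigma|{\nabla}e|^2\dx{x}+\alpha\int_{\Gamma_C}e^2\dx{s}\Bigr)\geq \frac{\beta_1}{2}\|e\|_{H^1(\Om)}^2
\]
by \eqref{norm_equ}; this gives the left inequality with $c_{\text{equ}}=\beta_1/2$.

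For the upper bound I would bound the same boundary integrand from above. For $f=f_1$ in \eqref{ef1}, $f'(s)=C_1+3C_2 s^2$ is a polynomial, so on $0\le t\le1$ one has $f'(u+te)\le C_1+3C_2(|u|+|e|)^2\lesssim 1+|u|^2+|e|^2$, and hence
\[
\int_{\Gamma_C}\bigl(F(u_\mathcal{T})-F(u)-f(u)e\bigr)\dx{s}
\lesssim \int_{\Gamma_C}(1+|u|^2+|u_\mathcal{T}|^2)\,e^2\dx{s}.
\]
Applying the generalized Hölder inequality with exponents $(\infty$ replaced by large finite $q$, so e.g. $(2,2,\tfrac{\ldots})$; concretely put the $e^2$ factor in $L^3(\Gamma_C)$ and the $(1+|u|^2+|u_\mathcal{T}|^2)$ factor in $L^{3/2}(\Gamma_C)$), then using the trace-type embedding \eqref{eq:imb1} to pass to $H^1(\Om)$-norms, and finally invoking the a priori bounds \eqref{stab_cont} and \eqref{stab_disc} to control $\|u\|_{H^1(\Om)}$ and $\|u_\mathcal{T}\|_{H^1(\Om)}$ by $\normg$, one gets the boundary term $\lesssim (1+\normg^2)\|e\|_{H^1(\Om)}^2$. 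Together with $\tfrac12\int_\Om\sigma|{\nabla}e|^2\le\tfrac{\sigma_2}{2}\|e\|_{H^1(\Om)}^2$ this yields the right inequality. For $f=f_2$ in \eqref{ef2}, $f'(s)=C_5(C_3 e^{C_3 s}+C_4 e^{-C_4 s})$ grows exponentially, so instead I would write $f'(u+te)\le C_5(C_3 e^{C_3(|u|+|e|)}+C_4 e^{C_4(|u|+|e|)})$ and use Lemma \ref{lem_ef2}: Hölder with a large finite exponent $q$ on $e^2$ and its conjugate $q'$ on the exponential factor, then Lemma \ref{lem_ef2} bounds $\int_{\Gamma_C}e^{q'C_i(|u|+|e|)}\dx{s}$ by a finite quantity depending only on $\|u\|_{H^1(\Om)}+\|e\|_{H^1(\Om)}$, again controlled via the stability estimates. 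This produces $C_{\text{equ}}$ depending on $\sigma,\alpha,\Om,\Gamma_C,m,C_{\cT}$ (the $m,C_{\cT}$-dependence entering only through the constants in \eqref{eq:imb1} and the embedding when one is careful about $V_\mathcal{T}$).

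The main obstacle is the upper bound in the exponential case: one must handle $\int_{\Gamma_C} f'(u+te)\,e^2\,\dx{s}$ without any $L^\infty$ control on $u$ or $u_\mathcal{T}$, and the naive estimate $e^{C|e|}\le$ (something times $\|e\|_{H^1}$) fails because the exponential is not controlled by the $H^1$-norm pointwise. The resolution is exactly the interplay of the generalized Hölder inequality with Lemma \ref{lem_ef2}, trading a small power of the exponential's exponent for integrability while keeping the $e^2$ factor in an $L^q(\Gamma_C)$-space that embeds in $H^1(\Om)^2$ by \eqref{eq:imb1}; one should verify that the resulting constant, though depending on $\normg$ through the stability bounds, is still independent of the mesh, which is what is needed downstream. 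A secondary (routine) point is checking the cancellation of the $\Gamma_A$-term carefully and making sure the Taylor remainder identity is written with the correct sign so that both inequalities come out with the stated constants.
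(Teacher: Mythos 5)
Your proposal is correct and follows essentially the same route as the paper: both reduce to the exact second-order Taylor identity $E(u_\mathcal{T})=\tfrac12\int_\Om\sigma|{\nabla}(u_\mathcal{T}-u)|^2\,\dx{x}+\int_0^1\int_{\Gamma_C}(1-t)f'(u+t(u_\mathcal{T}-u))(u_\mathcal{T}-u)^2\,\dx{s}\,\dx{t}$ via first-order optimality of $u$, get the lower bound from $f'\ge\alpha$ and \eqref{norm_equ} with $c_{\text{equ}}=\beta_1/2$, and get the upper bound from the generalized H\"{o}lder inequality, \eqref{eq:imb1}, Lemma \ref{lem_ef2} and the stability estimates \eqref{stab_cont}--\eqref{stab_disc} (the paper merely inserts the convexity bound $f'(w(t))\le(1-t)f'(u)+tf'(u_\mathcal{T})$ before applying these same tools, while you bound $f'$ along the segment directly). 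The only points to tidy are the sign convention for $e$, which you already flag and which is immaterial since every term is quadratic in $e$, and the fact that the resulting $C_{\text{equ}}$ depends on $\|g\|_{L^2(\Gamma_A)}$ through the stability bounds, exactly as in the paper's own proof.
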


\begin{proof}
Let $y(t):=\mathcal{J}(w(t))$ for $t\in [0,1]$, with $w(t):=(1-t)u+tu_\mathcal{T}$. Since $w(0)=u$ is the minimizer of $\mathcal{J}$
over $H^1(\Om)$, consequently, we can obtain $y'(0)=0$. In the meanwhile, we can obtain by Taylor's theorem that
\begin{align}\label{eq:666}
E(u_{\cT})=y(1)-y(0)=\int_{0}^1y''(t)(1-t)\mathrm{d}t.
\end{align}
To finish the proof, we need to compute $y''(t)$. In view that $F'(\cdot)=f(\cdot)$, an application of the chain rule implies
\[
\frac{\partial}{\partial t}F(w(t))=f(w(t))w'(t)=f(w(t))(u_\mathcal{T}-u),\quad
\frac{\partial^2}{\partial t^2}F(w(t))=f'(w(t))(u_\mathcal{T}-u)^2,
\]
which, together with the identity
\[
\frac{\partial^2}{\partial t^2}\Big(\frac{1}{2}\sigma|{\nabla}w(t)|^2\Big)=\sigma|{\nabla}(u_\mathcal{T}-u)|^2
\]
and the definition \eqref{eq:Jv},
yields
\begin{equation*}
y''(t)=\int_\Om\sigma|{\nabla}(u_\mathcal{T}-u)|^2\dx{x}
+\int_{\Gamma_C} f'(w(t))(u_\mathcal{T}-u)^2\dx{s}.
\end{equation*}
Combining with \eqref{eq:666}, we arrive at
\begin{equation}\label{lem_equiv1}
E(u_{\cT})=\int_0^1\int_\Om\sigma|{\nabla}(u_\mathcal{T}-u)|^2(1-t)\dx{x}\dx{t}
+\int_0^1\int_{\Gamma_C} f'(w(t))(u_\mathcal{T}-u)^2(1-t)\dx{s}\dx{t}.
\end{equation}
After combining with \eqref{prop_ef} and \eqref{norm_equ}, this derives the lower bound with $c_{\rm{equ}}:=\frac{1}{2}\beta_1$.

In the following we will prove the upper bound. The convexity of $f'$ implies
\begin{equation*}
f'(w(t))=f'((1-t)u+tu_{\mathcal{T}})\leq (1-t)f'(u)+tf'(u_\mathcal{T}) \text{ for all } t\in [0,1].
\end{equation*}
We will discuss the cases $f=f_1$ and $f=f_2$ separately.

For $f:=f_1$ in \eqref{ef1}, H\"{o}lder inequality, \eqref{eq:imb1}, \eqref{stab_cont} and \eqref{stab_disc} imply
\begin{align*}
\int_{\Gamma_C} f'(u)(u_\mathcal{T}-u)^2\dx{s}&=\quad C_1\int_{\Gamma_C}(u-u_\mathcal{T})^2\dx{s}+3C_2\int_{\Gamma_C}u^2(u-u_\mathcal{T})^2\dx{s}\nonumber\nonumber\\
&\leq C_1\normL{u-u_\mathcal{T}}{\Gamma_C}^2+3C_2\|u\|^2_{L^4(\Gamma_C)}\|u-u_\mathcal{T}\|^2_{L^4(\Gamma_C)}\nonumber\\
&\leq \Big(C_1\const{imb}{2}^2+3C_2\const{imb}{4}^4\Csb^2\normg^2\Big)\|u-u_\mathcal{T}\|^2_{H^1(\Om)}.
\end{align*}
Similarly, we obtain
\begin{align*}
&\int_{\Gamma_C} f'(u_{\cT})(u_\mathcal{T}-u)^2\dx{s}=C_1\int_{\Gamma_C}(u-u_\mathcal{T})^2\dx{s}
+3C_2\int_{\Gamma_C}u_\mathcal{T}^2(u-u_\mathcal{T})^2\dx{s}\nonumber\\
&\leq (C_1\const{imb}{2}^2+3C_2\const{imb}{4}^4\Csb^2\normg^2)\|u-u_\mathcal{T}\|^2_{H^1(\Om)}.
\end{align*}
Then the upper bound for $f:=f_1$ follows from these four estimates above.

In the case of \eqref{ef2}, $f:=f_2$. Then we can derive by application of the generalized H\"{o}lder inequality, together with Lemma \ref{lem_ef2}, \eqref{stab_cont} and \eqref{eq:imb1}, that
\begin{align*}
& \int_{\Gamma_C} f'(u)(u_\mathcal{T}-u)^2\dx{s}= C_3C_5\int_{\Gamma_C}e^{C_3u}(u-u_\mathcal{T})^2\dx{s}+C_4C_5\int_{\Gamma_C}e^{-C_4u}(u-u_\mathcal{T})^2{\rm d}s\nonumber\\
&\leq C_3C_5\|e^{C_3u}\|_{L^2(\Gamma_C)}\|u-u_\mathcal{T}\|_{L^4(\Gamma_C)}^2
+C_4C_5\|e^{-C_4u}\|_{L^2(\Gamma_C)}\|u-u_\mathcal{T}\|_{L^4(\Gamma_C)}^2\\
&:= C\|u-u_\mathcal{T}\|^2_{L^4(\Gamma_C)}\leq C\const{imb}{4}^2\normH{u-u_{\cT}}{\Om}^2.
\end{align*}
A similar argument yields
\begin{align*}
\int_{\Gamma_C} f'(u_\cT)(u_\mathcal{T}-u)^2\dx{s}\leq C\const{imb}{4}^2\normH{u-u_{\cT}}{\Om}^2.
\end{align*}
Finally, we can complete the proof after collecting these estimates above. 
\end{proof}
\begin{rem}\label{rem_equiv}
Note that let $\cT_{\ast}$ be a refinement of $\cT$ and $u_{\cT_\ast}$
be the solution to problem \eqref{disvp} over $\cT_\ast$. Then the estimate in Lemma \ref{lem_equiv} still holds should $u$ be replaced with $u_{\cT_\ast}$. Note also that when proving the contraction property of Algorithm \ref{afem} in Section \ref{sec:Convergence}, we will resort to the following identity
\[
\mathcal{J}(u_{\mathcal{T}_{\ast}})-\mathcal{J}(u)=
\mathcal{J}(u_{\cT})-\mathcal{J}(u)-(\mathcal{J}(u_{\cT})-\mathcal{J}(u_{\cT_\ast}))
\]
instead of Pythagoras property in the form of energy norm,
which is an important ingredient in relevant arguments for linear elliptic problems, but now fails for the nonlinear problem \eqref{vp}.
\end{rem}
The remaining of this section is concerned with the auxiliary results in the analysis: estimator reduction, C\'{e}a's lemma, oscillation perturbation and discrete reliability.
\begin{lem}[Estimator reduction]\label{lem_estred}
Let $\mathcal{T}\in\mathbb{T}$, $\mathcal{M}\subset\mathcal{T}$
and $\mathcal{T}_\ast\in\mathbb{T}$ be obtained from $\mathcal{T}$ by Algorithm \ref{afem} with $\mathcal{M}$ being the marked set. Let $u_\mathcal{T}\in V_\mathcal{T}$ and $u_{\mathcal{T}_\ast}\in V_{\mathcal{T}_\ast}$ be the solutions to problem \eqref{disvp} over $\mathcal{T}$ and $\mathcal{T}_\ast$ respectively. Then there exists a constant $\Const{est}$ depending only on $\sigma$, $\alpha$, $\Omega$, $\Gamma_C$, $C_{\cT}$, $m$ and $\|g\|_{L^2(\Gamma_C)}$ satisfying
\begin{equation*}
\forall \delta>0:\quad\eta_{\mathcal{T}_\ast}^2(u_{\mathcal{T}_\ast})\leq (1+\delta) \Big(\eta_{\mathcal{T}}^2(u_{\mathcal{T}})-\lambda\eta_{\mathcal{T}}^2
(u_{\mathcal{T}},\mathcal{M})\Big)+\Const{est}\|u_{\mathcal{T}_\ast}-
u_{\mathcal{T}}\|_{H^1(\Omega)}^2,
\end{equation*}
with $\lambda=1-\frac{1}{\sqrt{2}}$.
\end{lem}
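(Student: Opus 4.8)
The plan is to follow the now-standard estimator reduction argument of Cascón–Kreuzer–Nochetto–Siebert, adapted to the boundary nonlinearity. The starting point is the fact that when $\cT_\ast$ is obtained from $\cT$ by bisecting every marked element, each $T\in\mathcal{M}$ is split into at least two children of half the area, so the local mesh sizes obey $h_{T'}^2 \le \tfrac12 h_T^2$ and $h_{F'}\le\tfrac12 h_F$ on the refined portion; on the unrefined portion the mesh size is unchanged and the residuals $R_{T}(u_\cT)$, $J_F(u_\cT)$ are literally the same functions since $u_\cT\in V_\cT\subset V_{\cT_\ast}$. The first step is therefore to write $\eta_{\cT_\ast}^2(u_\cT)$ — the estimator on the fine mesh but evaluated at the \emph{coarse} discrete solution — and bound it elementwise by $\eta_\cT^2(u_\cT) - \lambda\,\eta_\cT^2(u_\cT,\mathcal{M})$ with $\lambda = 1-\tfrac1{\sqrt2}$, using the halving of $h$ on the marked elements and their edges. (The constant $1-1/\sqrt2$ is exactly what a single bisection gives after applying a Young inequality with a square-root-balanced weight; I would state this but not belabor it.)

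The second step is to pass from $\eta_{\cT_\ast}^2(u_\cT)$ to $\eta_{\cT_\ast}^2(u_{\cT_\ast})$. For this I use the Lipschitz dependence of the local indicator on its first argument: $\eta_{\cT_\ast}(u_{\cT_\ast}) \le \eta_{\cT_\ast}(u_\cT) + \big|\eta_{\cT_\ast}(u_{\cT_\ast}) - \eta_{\cT_\ast}(u_\cT)\big|$, and I estimate the difference term by term. The element and interior-edge residuals are linear in the discrete function, so their contribution is controlled by $C\,\|\nabla(u_{\cT_\ast}-u_\cT)\|_{L^2(\Omega)}^2$ after summing over elements (with $h_{T'}\le h_T$ bounded and an inverse inequality on the jump terms, exactly as in the reliability proof). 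The genuinely new piece is the cathode edge residual $J_F = f(v)+\sigma\nabla v\cdot n$, where the difference $f(u_{\cT_\ast})-f(u_\cT)$ appears. Here I would invoke the local Lipschitz continuity of $f$: since $f_1$ is polynomial and $f_2$ is Butler–Volmer, on bounded sets $f$ is Lipschitz, and both $u_\cT$ and $u_{\cT_\ast}$ lie in a fixed ball of $H^1(\Omega)$ by the uniform stability estimates \eqref{stab_disc}, \eqref{stab_cont}. Combining the generalized Hölder inequality (for $f_1$, splitting $(u^2+u u_{\cT_\ast}+u_{\cT_\ast}^2)(u-u_{\cT_\ast})$ and using \eqref{eq:imb1}; for $f_2$, using Lemma \ref{lem_ef2} to bound the exponential factors in $L^2(\Gamma_C)$) with a trace/scaled-trace inequality $h_F\|w\|_{L^2(F)}^2 \le C\|w\|_{H^1(\omega_F)}^2$ gives a bound $C\,\|u_{\cT_\ast}-u_\cT\|_{H^1(\Omega)}^2$ for that term, with $C$ depending on $\|g\|_{L^2(\Gamma_A)}$ through the stability bound — which is why $\|g\|$ enters $\Const{est}$.

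The third step is purely algebraic: having $\eta_{\cT_\ast}(u_{\cT_\ast})\le \eta_{\cT_\ast}(u_\cT) + C\,\|u_{\cT_\ast}-u_\cT\|_{H^1(\Omega)}$, I square, apply Young's inequality $(a+b)^2 \le (1+\delta)a^2 + (1+\delta^{-1})b^2$ for arbitrary $\delta>0$, insert the step-one bound for $\eta_{\cT_\ast}^2(u_\cT)$, and absorb $(1+\delta^{-1})C^2$ into a single constant $\Const{est}$; strictly $\Const{est}$ should carry the $\delta$-dependence, which matches the way the lemma is applied later (with $\delta$ eventually optimized). I expect the main obstacle to be step two's cathode term: one must be careful that the Lipschitz constant of $f$ is taken over a set large enough to contain the $L^q(\Gamma_C)$-values of all discrete solutions uniformly in $\cT$, and that the $L^4$–$L^4$ (for $f_1$) or exponential–$L^4$ (for $f_2$) Hölder splitting is compatible with the scaled trace inequality on a single edge; everything else is a direct transcription of the linear CKN argument together with the already-proved embedding and stability estimates.
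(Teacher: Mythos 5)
Your proposal is correct and follows essentially the same route as the paper: a local perturbation of the indicator in its argument (with the cathode term $f(u_{\cT_\ast})-f(u_\cT)$ controlled exactly as you describe, via generalized H\"older, inverse estimates, the embedding \eqref{eq:imb1} and Lemma \ref{lem_ef2}, yielding the $\|g\|$-dependence through \eqref{stab_cont}--\eqref{stab_disc}), combined with the $1/\sqrt{2}$ reduction on refined elements and the Young-inequality split in $\delta$. The only cosmetic remark is that the factor $\lambda=1-1/\sqrt{2}$ needs no Young inequality: it falls out directly from $h_T=|T|^{1/2}$ and the halving of area under one bisection, together with $\mathcal{M}\subseteq\cT\setminus\cT_\ast$.
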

Before proceeding to its proof, we need an auxiliary result:
\begin{lem}[Local perturbation of estimator] \label{lem_estredLocal}
Let $\cT\in \mathbb{T}$. Then there holds
\begin{align*}
\forall \delta\in (0,1):\quad\eta_{\cT}^2(v_{\cT},T)
&\leq (1+\delta)\eta_{\cT}^2(w_{\cT},T)
+(1+\frac{1}{\delta})\const{sym}{1}\Big(\normH{v_{\cT}-w_{\cT}}{\om_{T}}^2\nonumber\\
&+\sum_{F\in \partial T\cap\mathcal{F}_{\cT}(\Gamma_C)}h_{T}\normL{f(v_{\cT})-f(w_{\cT})}{F}^2\Big)
\end{align*}
for all $v_{\cT},w_{\cT}\in V_{\cT}^{m}$ and $T\in \cT$. Here, the constant $\const{sym}{1}$ depends on $C_{\cT}$, $m$, $\Om$, $\Gamma_C$, $\sigma$,  $\normH{v_{\cT}}{\Om}$ and $\normH{w_{\cT}}{\Om}$.
\end{lem}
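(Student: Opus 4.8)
The plan is to compare the local indicator $\eta_{\cT}^2(v_{\cT},T)$ with $\eta_{\cT}^2(w_{\cT},T)$ term by term, exploiting the fact that each contribution to the indicator is the square of an $L^2$-norm of a quantity that is \emph{linear} in the discrete function except for the nonlinear boundary term $f(v_{\cT})$ on $\Gamma_C$. First I would write, for each element residual, $R_T(v_{\cT})-R_T(w_{\cT})=\nabla\cdot(\sigma\nabla(v_{\cT}-w_{\cT}))$ and for each edge jump $J_F(v_{\cT})-J_F(w_{\cT})$ is either $[\sigma\nabla(v_{\cT}-w_{\cT})\cdot n_F]$ (interior, $\Gamma_0$, $\Gamma_A$ edges) or $f(v_{\cT})-f(w_{\cT})+\sigma\nabla(v_{\cT}-w_{\cT})\cdot n$ on $\Gamma_C$. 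Then for a generic nonnegative scalar $a$ split as $a\le b+c$ one has the elementary Young-type inequality $a^2\le(1+\delta)b^2+(1+\frac1\delta)c^2$; applying this with $a=\|(\cdot)(v_{\cT})\|_{L^2}$, $b=\|(\cdot)(w_{\cT})\|_{L^2}$, $c=\|(\cdot)(v_{\cT})-(\cdot)(w_{\cT})\|_{L^2}$ on each piece (element interior scaled by $h_T^2$, each edge scaled by $h_T$ or $\frac12 h_T$) and summing the defining formula \eqref{localerr} gives
\[
\eta_{\cT}^2(v_{\cT},T)\le(1+\delta)\eta_{\cT}^2(w_{\cT},T)+\Big(1+\tfrac1\delta\Big)\Big(\text{sum of the perturbation squares}\Big).
\]

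The perturbation squares are: $h_T^2\|\nabla\cdot(\sigma\nabla(v_{\cT}-w_{\cT}))\|_{L^2(T)}^2$, the interior/Neumann jump terms $h_T\|[\sigma\nabla(v_{\cT}-w_{\cT})\cdot n_F]\|_{L^2(F)}^2$, and the $\Gamma_C$ term, which by a further $(b+c)^2\le 2b^2+2c^2$ splits into $h_T\|\sigma\nabla(v_{\cT}-w_{\cT})\cdot n\|_{L^2(F)}^2$ plus $h_T\|f(v_{\cT})-f(w_{\cT})\|_{L^2(F)}^2$. The next step is to absorb everything except the last $\Gamma_C$ piece into $C\|v_{\cT}-w_{\cT}\|_{H^1(\om_T)}^2$. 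For this I would use inverse estimates on the shape-regular mesh: $h_T\|\nabla(\cdot)\cdot n_F\|_{L^2(F)}^2\le C h_T\cdot h_T^{-1}\|\nabla(\cdot)\|_{L^2(\om_F)}^2=C\|\nabla(\cdot)\|_{L^2(\om_F)}^2$ (trace-inverse inequality on a polynomial, using $\sigma\in W^{1,\infty}$ so $\sigma\nabla(v_{\cT}-w_{\cT})$ is controlled), and similarly $h_T^2\|\nabla\cdot(\sigma\nabla(\cdot))\|_{L^2(T)}^2\le C\|\nabla(\cdot)\|_{L^2(T)}^2$ after writing $\nabla\cdot(\sigma\nabla w)=\nabla\sigma\cdot\nabla w+\sigma\Delta w$ on each element and using the inverse inequality $h_T\|\Delta w\|_{L^2(T)}\le C\|\nabla w\|_{L^2(T)}$ for $w$ a polynomial. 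Collecting these over the finitely many faces of $T$ (bounded in number by shape regularity, contributing a factor through $C_{\text{ov}}$) yields the stated bound with $\const{sym}{1}$ depending on $C_{\cT}$, $m$, $\sigma$, $\Om$, $\Gamma_C$; the dependence on $\normH{v_{\cT}}{\Om}$ and $\normH{w_{\cT}}{\Om}$ enters only because the $\Gamma_C$ piece is kept separate and its constant (via the later use of this lemma when bounding $\|f(v_{\cT})-f(w_{\cT})\|_{L^2(F)}$) will involve those norms — within this lemma itself the factor on that term is simply $1$, so one may even state $\const{sym}{1}$ with the cleaner dependence and note that the $H^1$-norm dependence is harmless.

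The main obstacle is purely bookkeeping: handling $\sigma\in W^{1,\infty}$ rather than piecewise constant, so that $R_T$ genuinely contains the $\nabla\sigma\cdot\nabla v_{\cT}$ term and is not a polynomial, which forces the split $\nabla\cdot(\sigma\nabla w)=\nabla\sigma\cdot\nabla w+\sigma\Delta w$ before invoking the inverse inequality, and making sure the constant picks up $\|\sigma\|_{W^{1,\infty}}$ correctly; and, more delicately, ensuring the factor of the $\Gamma_C$-nonlinear term is exactly $(1+\frac1\delta)\const{sym}{1}$ and not something worse, which requires carefully tracking the $(b+c)^2$ splittings so the nonlinear contribution is not multiplied by the $(1+\frac1\delta)$ twice. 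Once the algebra is organized so that each occurrence of Young's inequality is applied once per term with the same $\delta$, the rest is a routine application of scaled trace and inverse estimates on shape-regular simplices.
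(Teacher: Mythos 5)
Your proposal is correct and follows essentially the same route as the paper's proof: Young's inequality applied termwise to the definition \eqref{localerr}, followed by the scaled trace and inverse estimates \eqref{eq:inv} (with the split $\nabla\cdot(\sigma\nabla e)=\nabla\sigma\cdot\nabla e+\sigma\Delta e$ handled exactly as you describe) to absorb all linear perturbations into $\normH{v_{\cT}-w_{\cT}}{\om_{T}}^2$ while keeping the $\Gamma_C$ term explicit. You are also right about the constant: for the statement as literally written your argument suffices, and the dependence of $\const{sym}{1}$ on $\normH{v_{\cT}}{\Om}$ and $\normH{w_{\cT}}{\Om}$ only arises because the paper's proof continues one step further, bounding $h_T\normL{f(v_{\cT})-f(w_{\cT})}{F}^2$ by a multiple of $\normL{v_{\cT}-w_{\cT}}{F}^2$ (via the generalized H\"older inequality, the edge inverse estimate \eqref{eq:inv3}, the embedding \eqref{eq:imb1}, and Lemma \ref{lem_ef2} for the exponential nonlinearity), which is the form actually invoked later in \eqref{lem_estred1}.
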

\begin{proof}
Let $e:=v_{\cT}-w_{\cT}$. Firstly, we can obtain by the definition \eqref{localerr} combining with an application of the Young's inequality that
\begin{align*}
\forall \delta\in (0,1):\quad\eta_{\cT}^2(v_{\cT},T)
&\leq (1+\delta)\eta_{\cT}^2(w_{\cT},T)
+(1+\frac{1}{\delta})\Big(h_{T}^2\|\nabla\cdot(\sigma\nabla e)\|_{L^2(T)}^2\nonumber\\
&+\sum_{F\in \partial T}h_{T}\normL{J_F(v_{\cT})-J_F(w_{\cT})}{F}^{2}\Big).
\end{align*}
The inverse estimate indicates for some positive constant $\Const{inv}$ depending only on $\mathcal{C}_{\cT}$ and $m$, there holds
\begin{equation}\label{eq:inv}
\begin{aligned}
\|\nabla e\|_{L^2(T)}&\leq\Const{inv}h_{T}^{-1}\|e\|_{L^2(T)}\\
\|\nabla e\|_{L^2(F)}&\leq\Const{inv}h_{T}^{-1/2}\|\nabla e\|_{L^2(T)}.
\end{aligned}
\end{equation}
Combining those two estimate, we can obtain
\begin{align}\label{eq:aux1}
\forall \delta\in (0,1):\quad\eta_{\cT}^2(v_{\cT},T)
&\leq (1+\delta)\eta_{\cT}^2(w_{\cT},T)
+(1+\frac{1}{\delta})\Big(\const{aux}{1}\normH{e}{\om_{T}}^2\nonumber\\
&+\sum_{F\in \partial T\cap\mathcal{F}_{\cT}(\Gamma_C)}h_{T}\normL{f(v_{\cT})-f(w_{\cT})}{F}^2\Big).
\end{align}
Here, the constant $\const{aux}{1}$ depends on $C_{\cT}$, $m$ and $\sigma$. Therefore,
it suffices to bound the last term.

For $f:=f_1$ in \eqref{ef1}, a direct calculation leads to
\[
|f(v_{\cT})-f(w_{\cT})|^2\leq |e|^2(2C_1^2+36C_2^2(v_{\cT}^4+w_{\cT}^4)).
\]
The inverse estimate \cite[Section 4.5]{Brenner&Scott} gives
\begin{align}\label{eq:inv3}
\normLp{e}{F}{q}\leq \const{inv}{q}h_{T}^{1/q-1/2}\normLp{e}{F}{2} \text{ for } q>2
\end{align}
with the constant $\const{inv}{q}$ depending on $C_{\cT}$ and $m$.

The combination of these two inequalities with $q:=6$, together with the generalized H\"{o}lder's inequality and \eqref{eq:imb1}, yields
\begin{align*}
\normLp{f(v_{\cT})-f(w_{\cT})}{F}{2}^2&\leq \int_{F}|e|^2(2C_1^2+9C_2^2(v_{\cT}^4+w_{\cT}^4))\dx{s}\nonumber\\
&\leq\const{aux}{2}\normL{e}{F}^2, 
\end{align*}
with
$
\const{aux}{2}:=2C_1^2+36C_{2}^2\const{inv}{6}^2h_{T}^{-2/3}\const{imb}{6}^4(\normH{v_{\cT}}{\Om}^4+\normH{w_{\cT}}{\Om}^4).
$
Together with \eqref{eq:aux1}, this proves the desired result with
$\const{sym}{1}:=\max\{\const{aux}{1},h_{T}\const{aux}{2}\}$.

For $f:=f_2$ in \eqref{ef2}, note that the convexity of $f'(\cdot)$, together with the mean value theorem, implies
for some $s\in (0,1)$, there holds
\begin{align}
|f(v_{\cT})-f(w_\mathcal{T})|&=f'(s v_{\cT}+(1-s)w_\mathcal{T})|v_{\cT}-w_\mathcal{T}|\nonumber\\
&\leq(s f'(v_{\cT})+(1-s)f'(w_\mathcal{T}))|v_{\cT}-w_\mathcal{T}|.\label{eq:mean}
\end{align}
Consequently, taking square on both sides and employing the definition \eqref{ef2}, leads to
\[
|f(v_{\cT})-f(w_{\cT})|^2\leq 4C_{5}^2|e|^2\Big(C_3^2(e^{2C_{3}v_{\cT}}+e^{2C_{3}w_{\cT}})+C_4^2(e^{-2C_{4}v_{\cT}}+e^{-2C_{4}w_{\cT}})\Big).
\]
Then integrating both sides over $F$ and invoking the generalized H\"{o}lder's inequality, in combination with \eqref{eq:inv3} with $q:=4$ and Lemma \ref{lem_ef2}, we get
\begin{align}
\normLp{f(v_{\cT})-f(w_{\cT})}{F}{2}^2&\leq
4C_5^{2}\Big(C_3^{2}(\int_Fe^{4C_3 v_{\cT}}\dx{s})^{1/2}+C_3^{2}(\int_Fe^{4C_3 w_{\cT}}\dx{s})^{1/2}\nonumber\\
&+C_4^{2}(\int_Fe^{-4C_4 v_{\cT}}\dx{s})^{1/2}+C_4^{2}(\int_Fe^{-4C_4 w_{\cT}}\dx{s})^{1/2}   \Big)\|v_{\cT}-w_\mathcal{T}\|_{L^4(F)}^2\nonumber\\
&\leq \const{aux}{3}h_T^{-1/2}\|v_{\cT}-w_\mathcal{T}\|_{L^2(F)}^{2}. \label{eq:aux888}
\end{align}
Here,
\begin{align*}
\const{aux}{3}:&=4\const{inv}{4}^{2}C_5^{2}\Big(C_3^{2}(2+2 h_{F}^{1/2}+e^{8C_3^{2}\Const{exp}\normH{v_{\cT}}{\Om}^2}+
e^{8C_3^{2}\Const{exp}\normH{w_{\cT}}{\Om}^2})\\
&+C_4^{2}(2+2 h_{F}^{1/2}+e^{8C_4^{2}\Const{exp}\normH{v_{\cT}}{\Om}^2}+e^{8C_4^{2}\Const{exp}\normH{w_{\cT}}{\Om}^2})\Big).
\end{align*}
Therefore, noticing \eqref{eq:aux1}, the assertion follows with
$\const{sym}{1}:=\max\{\const{aux}{1},h_{T}^{1/2}\const{aux}{3}\}$.
\end{proof}

\begin{proof} [Proof of Lemma \ref{lem_estred}] Let $T\in \cT_{\ast}$. Utilizing Lemma \ref{lem_estredLocal} with $\cT:=\cT^{*}$ and $v_{\cT}:=u_{\cT}$,
$w_{\cT}:=u_{\cT_{\ast}}$, we can derive
\begin{equation}\label{lem_estred1}
\begin{split}
\eta_{\mathcal{T}_\ast}^2(u_{\mathcal{T}_\ast},T)&\leq
(1+\delta)\eta_{\mathcal{T}_\ast}^2(u_\mathcal{T},T)+(1+\frac{1}{\delta})\const{sym}{1}
\Big(\|u_{\mathcal{T}_\ast}-u_\mathcal{T}\|^2_{H^1(\omega_T)}\\
&\quad+\sum_{F\in\partial T\cap\mathcal{F}_{\cT_{\ast}}(\Gamma_C)}\|u_{\mathcal{T}_\ast}-u_{\mathcal{T}}\|^2_{L^2(F)}\Big).
\end{split}
\end{equation}
Summing over all elements $T\in\cT_{\ast}$ leads to
\begin{align*}
\eta_{\mathcal{T}_\ast}^2(u_{\mathcal{T}_\ast})&\leq
(1+\delta)\eta_{\mathcal{T}_\ast}^2(u_\mathcal{T})+
\const{sym}{1}(1+\frac{1}{\delta})\Big(
C_{\text{ov}}\|u_{\mathcal{T}_\ast}-u_\mathcal{T}\|^2_{H^1(\Om)}\\
&\quad+
\|u_{\mathcal{T}_\ast}-u_{\mathcal{T}}\|^2_{L^2(\Gamma_C)}\Big).
\end{align*}
Now the Sobolev imbedding theorem \eqref{eq:imb1} implies
\begin{align}\label{eq:222}
\eta_{\mathcal{T}_\ast}^2(u_{\mathcal{T}_\ast})\leq (1+\delta)\eta_{\mathcal{T}_\ast}^2(u_\mathcal{T})+C_{\text{est}}\|u_{\mathcal{T}_\ast}-u_\mathcal{T}\|^2_{H^1(\Om)}
\end{align}
with $C_{\text{est}}:=\const{sym}{1}(1+\frac{1}{\delta})
( C_{\text{ov}}+\const{imb}{2}^2)$. Note that
thanks to the stability estimate \eqref{stab_disc} for $u_\cT$ and $u_{\cT_\ast}$, the positive constant $C_{\text{est}}$ depends only on $\sigma$, $\alpha$, $\Omega$, $\Gamma_C$, $C_{\cT}$, $m$ and $\|g\|_{L^2(\Gamma_C)}$. Note also that
\begin{align}\label{eq:222-1}
\eta_{\mathcal{T}_\ast}^2(u_\mathcal{T})
=\eta_{\mathcal{T}}^2(u_\mathcal{T},\cT\cap\cT_{*} )+\eta_{\mathcal{T}_\ast}^2(u_\mathcal{T},\cT_{*}\backslash \cT).
\end{align}
In view that for each element $T\in\cT_{*}\backslash \cT$, there exists a unique $\hat{T}\in \cT\backslash \cT_{*}$,
s.t., $T\subset\hat{T}$ and $h_{T}\leq \frac{1}{\sqrt{2}}h_{\hat{T}}$.
Then by definition \eqref{localerr}, we arrive at
\begin{align}\label{eq:222-2}
\eta_{\mathcal{T}_\ast}^2(u_\mathcal{T},\cT_{*}\backslash \cT)\leq \frac{1}{\sqrt{2}}
\eta_{\mathcal{T}}^2(u_\mathcal{T},\cT\backslash \cT_{*}).
\end{align}
Finally, the combination of \eqref{eq:222}, \eqref{eq:222-1}, \eqref{eq:222-2} and $\mathcal{M}\subseteq\cT\backslash \cT_{*}$ yields the assertion.
\end{proof}

\begin{rem}\label{rem_estredLocalper}
     If we exchange $u_{\cT}$ and $u_{\cT_{*}}$ in \eqref{lem_estred1} and then sum up it over $\cT\cap\cT_{*}$ instead, a similar argument leads to
\begin{align}\label{eq:777}
\eta_{\mathcal{T}}^2(u_{\mathcal{T}},\cT\cap\cT_{*})
\leq (1+\delta)\eta_{\mathcal{T}_{*}}^2(u_{\mathcal{T}_*},\cT\cap\cT_{*})
+C_{\text{est}}\|u_{\mathcal{T}_\ast}-u_\mathcal{T}\|^2_{H^1(\Om)}.
\end{align}
\end{rem}

\begin{lem}[C\'{e}a's lemma]\label{lem_cea}
Let $u$ and $u_\mathcal{T}$ be solutions to problems \eqref{vp} and \eqref{disvp}
over some mesh $\mathcal{T}\in\mathbb{T}$. Then
\begin{equation*}
\|u-u_\mathcal{T}\|^2_{H^1(\Om)}\leq C_{cea}\inf_{v_\mathcal{T}\in V_\mathcal{T}}\|u-v_\mathcal{T}\|^2_{H^1{(\Om)}}.
\end{equation*}
\end{lem}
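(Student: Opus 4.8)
The plan is to pass from the $H^1$-error to the energy functional $\mathcal{J}$ and then to re-run the second-order Taylor expansion from the proof of Lemma~\ref{lem_equiv}, but evaluated at a quasi-best competitor rather than at $u_\cT$. Since $u_\cT$ minimizes $\mathcal{J}$ over $V_\cT^{m}$, we have $\mathcal{J}(u_\cT)\le\mathcal{J}(v_\cT)$ for every $v_\cT\in V_\cT^{m}$, so the lower bound of Lemma~\ref{lem_equiv} yields
\[
c_{\rm equ}\|u-u_\cT\|_{H^1(\Om)}^2\le E(u_\cT)=\mathcal{J}(u_\cT)-\mathcal{J}(u)\le\mathcal{J}(v_\cT)-\mathcal{J}(u)\qquad\text{for all }v_\cT\in V_\cT^{m}.
\]
Hence it will suffice to produce one $v_\cT\in V_\cT^{m}$ attaining $\inf_{w_\cT\in V_\cT^{m}}\|u-w_\cT\|_{H^1(\Om)}$ for which $\mathcal{J}(v_\cT)-\mathcal{J}(u)\le C\|u-v_\cT\|_{H^1(\Om)}^2$.

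First I would take $v_\cT$ to be the $H^1(\Om)$-orthogonal projection of $u$ onto the closed subspace $V_\cT^{m}$ (it exists and is unique). Because $u_\cT\in V_\cT^{m}$, the a priori bounds \eqref{stab_cont} and \eqref{stab_disc} give $\|u-v_\cT\|_{H^1(\Om)}\le\|u-u_\cT\|_{H^1(\Om)}\le 2\Csb\normg$, hence $\|v_\cT\|_{H^1(\Om)}\le\|u\|_{H^1(\Om)}+\|u-v_\cT\|_{H^1(\Om)}\le 3\Csb\normg$. Consequently the segment $w(t):=(1-t)u+tv_\cT$, $t\in[0,1]$, satisfies $\|w(t)\|_{H^1(\Om)}\le 3\Csb\normg$, i.e. the competitor and all intermediate states are controlled purely by the data.

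Then, mimicking the proof of Lemma~\ref{lem_equiv} with $u_\cT$ replaced by $v_\cT$: since $u$ minimizes $\mathcal{J}$ over $H^1(\Om)$, the function $y(t):=\mathcal{J}(w(t))$ has $y'(0)=0$, so Taylor's theorem gives
\[
\mathcal{J}(v_\cT)-\mathcal{J}(u)=\int_0^1 y''(t)(1-t)\,\dx t,\qquad y''(t)=\int_\Om\sigma|\nabla(v_\cT-u)|^2\,\dx x+\int_{\Gamma_C}f'(w(t))(v_\cT-u)^2\,\dx s.
\]
The volume term is bounded by $\sigma_2\|u-v_\cT\|_{H^1(\Om)}^2$. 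For the boundary term I would repeat verbatim the upper-bound estimates of Lemma~\ref{lem_equiv}: for $f=f_1$ use H\"older's inequality, the imbedding \eqref{eq:imb1} and $\|w(t)\|_{H^1(\Om)}\le 3\Csb\normg$; for $f=f_2$ use the generalized H\"older inequality, Lemma~\ref{lem_ef2} (applicable precisely because $\|w(t)\|_{H^1(\Om)}$ is bounded) and \eqref{eq:imb1}. Both give $\int_{\Gamma_C}f'(w(t))(v_\cT-u)^2\,\dx s\le C\|u-v_\cT\|_{H^1(\Om)}^2$ with $C$ depending only on the data. Integrating in $t$ (note $\int_0^1(1-t)\,\dx t=\tfrac12$) yields $\mathcal{J}(v_\cT)-\mathcal{J}(u)\le C\|u-v_\cT\|_{H^1(\Om)}^2$, and combining with the first display gives $\|u-u_\cT\|_{H^1(\Om)}^2\le(C/c_{\rm equ})\inf_{w_\cT\in V_\cT^{m}}\|u-w_\cT\|_{H^1(\Om)}^2$, i.e. the claim with $C_{\rm cea}=C/c_{\rm equ}$ depending on $\sigma,\alpha,\Om,\Gamma_C,m,C_\cT$ and $\normg$.

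The main obstacle is exactly what dictates the choice of $v_\cT$: unlike in Lemma~\ref{lem_equiv}, where both $u$ and $u_\cT$ obey the a priori bound $\Csb\normg$, a general competitor in $V_\cT^{m}$ carries no such control, and the nonlinear boundary contribution $f'(w(t))$ on $\Gamma_C$ (cubic for $f_1$, exponential for $f_2$) cannot be estimated without an $H^1(\Om)$ bound on $w(t)$. Picking $v_\cT$ to be the best $H^1(\Om)$-approximation and invoking the stability estimates is what furnishes that bound; once it is available, everything else is a routine rerun of the computations already performed for Lemma~\ref{lem_equiv}.
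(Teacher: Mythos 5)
Your argument is correct, but it follows a genuinely different route from the paper. The paper's proof stays at the level of the variational equations: it starts from the coercivity bound \eqref{eq:galerkinO}, uses the ``Galerkin orthogonality'' obtained by subtracting \eqref{disvp} from \eqref{vp} with the test function $v_\cT-u_\cT\in V_\cT$ to replace $u-u_\cT$ by $u-v_\cT$ in the right-hand side, bounds the nonlinear boundary term $\int_{\Gamma_C}(f(u)-f(u_\cT))(u-v_\cT)\,\dx{s}$ by $\const{aux}{4}\normH{u-u_\cT}{\Om}\normH{u-v_\cT}{\Om}$ via the generalized H\"older inequality, \eqref{eq:imb1} and the stability estimates, and finishes with Young's inequality. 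Because the nonlinearity there is only evaluated at $u$ and $u_\cT$ (both controlled by \eqref{stab_cont}--\eqref{stab_disc}), the paper's estimate holds for an \emph{arbitrary} competitor $v_\cT$, and no minimization structure is needed beyond coercivity. You instead exploit the fact that $u_\cT$ minimizes $\mathcal{J}$ over $V_\cT^{m}$ — a fact the paper itself relies on elsewhere (Remark \ref{rem_equiv} and Theorem \ref{thm_conv}) — reduce to bounding $\mathcal{J}(v_\cT)-\mathcal{J}(u)$ from above, and rerun the second-order Taylor expansion of Lemma \ref{lem_equiv} along the segment from $u$ to $v_\cT$. You correctly identify the one point where this route could fail — the nonlinear term $f'(w(t))$ needs an $H^1$ bound on the competitor — and you resolve it by taking $v_\cT$ to be the $H^1$-orthogonal projection of $u$, which both attains the infimum and inherits a data-dependent bound via $\|u-v_\cT\|_{H^1(\Om)}\le\|u-u_\cT\|_{H^1(\Om)}$. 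Both proofs deliver a constant depending on the same quantities (including $\normg$); the paper's is slightly more economical since it avoids the need for a distinguished competitor, while yours makes the convexity structure that underlies the whole convergence analysis more visible.
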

\begin{proof}
We derive from \eqref{eq:galerkinO} and the Galerkin Orthogonality that for any $v_\mathcal{T}\in V_\mathcal{T}$, there holds
\begin{align*}
\beta_1\|u-u_\mathcal{T}\|^2_{H^1(\Om)}
&\leq\int_\Om\sigma|{\nabla}(u-u_\mathcal{T})|^2 \dx{x}+
\int_{\Gamma_C}(f(u)-f(u_\mathcal{T}))(u-u_\mathcal{T})\dx{s}\nonumber\\
&= \int_\Om\sigma{\nabla}(u-u_\mathcal{T})\cdot{\nabla}(u-v_\mathcal{T}) \dx{x}+
\int_{\Gamma_C}(f(u)-f(u_\mathcal{T}))(u-v_\mathcal{T})\dx{s}.
\end{align*}
We focus on the second term, and claim
\begin{align*}
\Big|\int_{\Gamma_C}(f(u)&-f(u_\mathcal{T}))(u-v_\mathcal{T})\dx{s}\Big|\leq \const{aux}{4}\normH{u-u_\mathcal{T}}{\Om}\normH{u-v_\mathcal{T}}{\Om}.
\end{align*}
For $f:=f_1$ in \eqref{ef1}, notice that
\[
|f(u)-f(u_\mathcal{T})|\leq |u-u_{\cT}|(C_1+3C_{2}(u^2+u_{\cT}^2)) \text{ on }\Gamma_C,
\]
therefore, an application of the generalized H\"{o}lder's inequality implies
\begin{align*}
\Big|\int_{\Gamma_C}(f(u)&-f(u_\mathcal{T}))(u-v_\mathcal{T})\dx{s}\Big|\leq C_{1}\normL{u-u_\mathcal{T}}{\Gamma_C}\normL{u-v_\mathcal{T}}{\Gamma_C}\\
&+3C_{2}\normLp{u-u_\mathcal{T}}{\Gamma_C}{4}
\normLp{u-v_\mathcal{T}}{\Gamma_C}{4}\Big(\normLp{u}{\Gamma_C}{4}^2
+\normLp{u_{\cT}}{\Gamma_C}{4}^2\Big).
\end{align*}
Thus the claim follows from \eqref{eq:imb1}, \eqref{stab_cont} and \eqref{stab_disc}
with
$
\const{aux}{4}:=C_{1}\const{imb}{2}^2+6C_{2}\Csb^2\const{imb}{4}^4\normg^{2}.
$
For $f:=f_2$ in \eqref{ef2}, by \eqref{eq:mean}, we deduce
\[
|f(u)-f(u_\mathcal{T})|\leq C_{5}|u-u_{\cT}|\Big(C_{3}e^{C_{3}u}+C_{4}e^{-C_{4}u}+C_{3}e^{C_{3}u_{\cT}}+C_{4}e^{-C_{4}u_{\cT}}\Big) \text{ on }\Gamma_{C}.
\]
Multiplying by $u-v_{\cT}$, integrating over $\Gamma_C$ and applying generalized H\"{o}lder's inequality and Lemma \ref{lem_ef2} yield
\begin{align*}
\Big|\int_{\Gamma_C}(f(u)&-f(u_\mathcal{T}))(u-v_\mathcal{T})\dx{s}\Big|\leq
C_{5}\normLp{u-u_{\cT}}{\Gamma_C}{4}\normLp{u-v_{\cT}}{\Gamma_C}{4}\Big(  C_{3}\big(2+2|\Gamma_C|^{1/2}\\
&+(e^{2C_{3}^{2}\Const{exp}\normH{u}{\Om}^{2}}
+e^{2C_{3}^{2}\Const{exp}\normH{u_{\cT}}{\Om}^{2}})|\Gamma_C|^{1/2}\big)+C_{4}\big(2+2|\Gamma_C|^{1/2}\\
&+(e^{2C_{4}^{2}\Const{exp}\normH{u}{\Om}^{2}}+e^{2C_{4}^{2}\Const{exp}\normH{u_{\cT}}{\Om}^{2}})|\Gamma_C|^{1/2}\big)\Big).
\end{align*}
Thereafter, we establish the claim by appealing to \eqref{eq:imb1}, \eqref{stab_cont} and \eqref{stab_disc} with
\begin{align*}
\const{aux}{4}:&=C_{5}\const{imb}{4}^{2}\Big(  C_{3}\big(2+2|\Gamma_C|^{1/2}+2|\Gamma_C|^{1/2}e^{2C_{3}^{2}\Const{exp}\Csb^2\normg^2}|\Gamma_C|^{1/2}\big)\\
&+C_{4}\big(2+2|\Gamma_C|^{1/2}
+2|\Gamma_C|^{1/2}e^{2C_{4}^{2}\Const{exp}\Csb^2\normg^{2}}\big)\Big).
\end{align*}
The desired assertion follows from the claim and Young's inequality with
$\Const{cea}:=2({\sigma_2^{2}}/{\beta_1^2}+{\const{aux}{4}^{2}}/{\beta_1^2}).$
\end{proof}
To establish the oscillation perturbation estimate, we may
follow the proof of Lemma \ref{lem_estredLocal} to obtain the local
perturbation of oscillation. However, this leads to the issue that the related constant depends on $v_\cT$ and $w_\cT$ as $C_{\mathrm{sym},1}$ in Lemma \ref{lem_estredLocal}. As a result, the constant in the subsequent convergence rate involves the finite
element function $v_\cT$, which should be avoided in order to establish the quasi-optimality estimate. To this end, we keep the nonlinear function in the following estimate.
\begin{lem}[Oscillation perturbation]\label{lem_osc_p}
Let $\mathcal{T}, \mathcal{T}_\ast\in\mathbb{T}$ with $\mathcal{T}_\ast$ being a refinement of $\mathcal{T}$.
If $v_\cT\in V_\mathcal{T}$ and $v_{\mathcal{T}_\ast}\in V_{\mathcal{T}_\ast}$, then
\begin{equation*}
\mathrm{osc}^2_\cT(v_{\cT},\mathcal{T}\cap\mathcal{T}_\ast)\leq 2\mathrm{osc}^2_{\cT_\ast}(v_{\cT_\ast},\mathcal{T}\cap\mathcal{T}_\ast)+\Const{op}
\Big(\|v_{\cT_\ast}-v_\cT\|^2_{H^1(\Om)}
+\sum_{F\in\mathcal{F}_{\cT}(\Gamma_C)}h_{F}\|\tilde{f}(v_{\cT_{\ast}})
-\tilde{f}(v_{\cT})\|_{L^2(F)}^2\Big),
\end{equation*}
where $\Const{op}$ depends on $C_\cT$, $m$, $\sigma$ and $\Const{ov}$, and
\begin{equation}\label{eftilde}
    \tilde{f}:=\left\{
    \begin{array}{ll}
        0&\mbox{if}~f=f_1\\
        f_2&\mbox{if}~f=f_2.
    \end{array}
    \right.
\end{equation}
\end{lem}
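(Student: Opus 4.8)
The strategy is to mimic the argument of Lemma~\ref{lem_estredLocal}, but with the crucial difference that the nonlinear contribution on $\Gamma_C$ is \emph{not} estimated away: for $f=f_2$ we keep $\tilde f(v_{\cT_\ast})-\tilde f(v_{\cT})=f_2(v_{\cT_\ast})-f_2(v_{\cT})$ explicitly in the bound, and for $f=f_1$ we exploit that the difference $R_T-\bar R_T$ and the jump difference $J_F-\bar J_F$ involve only the \emph{polynomial} parts of the residuals, so the cubic term is annihilated by the $L^2$-projection onto $P_{3m}(F)$ and therefore contributes nothing to the oscillation (hence $\tilde f=0$ is the right choice in \eqref{eftilde}). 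The decomposition to exploit is
\begin{equation*}
\mathrm{osc}^2_\cT(v_{\cT},T)=h_T^2\|(\mathrm{id}-\Pi_{m-2})R_T(v_{\cT})\|^2_{L^2(T)}+\sum_{F\in\partial T}h_T\|(\mathrm{id}-\Pi_F)J_F(v_{\cT})\|^2_{L^2(F)},
\end{equation*}
where $\Pi_{m-2}$ and $\Pi_F$ denote the relevant $L^2$-projections (integral average if $m=1$), and the point is that $\mathrm{id}-\Pi$ is an orthogonal projection, hence a contraction in $L^2$.

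\medskip
\textbf{Step 1 (local comparison on a common element).} Fix $T\in\mathcal{T}\cap\mathcal{T}_\ast$; then $T$ is an element of both meshes with the same $h_T$, and every edge $F\in\partial T$ is an edge of both meshes. Write $e:=v_{\cT_\ast}-v_{\cT}$. Using $R_T(v_{\cT})=R_T(v_{\cT_\ast})-\nabla\!\cdot(\sigma\nabla e)$ and the triangle inequality together with Young's inequality (with the benign factor $2$, since we do not need a $1+\delta$ here — a fixed constant suffices), we get
\begin{equation*}
\mathrm{osc}^2_\cT(v_{\cT},T)\leq 2\,\mathrm{osc}^2_{\cT_\ast}(v_{\cT_\ast},T)+2\Big(h_T^2\|(\mathrm{id}-\Pi_{m-2})\nabla\!\cdot(\sigma\nabla e)\|^2_{L^2(T)}+\sum_{F\in\partial T}h_T\|(\mathrm{id}-\Pi_F)(J_F(v_{\cT})-J_F(v_{\cT_\ast}))\|^2_{L^2(F)}\Big).
\end{equation*}
Because $\mathrm{id}-\Pi$ is an $L^2$-contraction, the volume term is bounded by $2h_T^2\|\nabla\!\cdot(\sigma\nabla e)\|^2_{L^2(T)}\leq \Const{aux} h_T^{-2}\|e\|^2_{L^2(T)}\leq\Const{aux}\|\nabla e\|^2_{L^2(\omega_T)}$ via the inverse estimate \eqref{eq:inv} exactly as in Lemma~\ref{lem_estredLocal}, with a constant depending only on $C_\cT,m,\sigma$. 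For the edge terms, split $J_F(v_{\cT})-J_F(v_{\cT_\ast})$ into its linear-in-gradient part (the $\sigma\nabla\cdot n$ / jump part, present on every edge) and, only when $F\in\mathcal{F}_\cT(\Gamma_C)$, the nonlinear part $f(v_{\cT})-f(v_{\cT_\ast})$. The gradient part is controlled by the inverse trace estimate \eqref{eq:inv} by $\Const{inv}h_T^{-1/2}\|\nabla e\|_{L^2(T)}$, giving $h_T\|\cdot\|^2_{L^2(F)}\lesssim\|\nabla e\|^2_{L^2(\omega_T)}$ again. This disposes of everything except the cathode nonlinearity.

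\medskip
\textbf{Step 2 (handling the cathode term, split by $f_1$ vs.\ $f_2$).} For $f=f_1$: the $L^2(F)$-projection defining $\bar J_F$ on $\Gamma_C$-edges is onto $P_{3m}(F)$, and on such an edge $F$ the trace $v_{\cT}|_F$ is a polynomial of degree $\leq m$, so $f_1(v_{\cT})|_F=C_1 v_{\cT}+C_2 v_{\cT}^3\in P_{3m}(F)$ — likewise for $v_{\cT_\ast}$ since $F$ is also an edge of $\cT_\ast$ — hence $(\mathrm{id}-\Pi_F)(f_1(v_{\cT})-f_1(v_{\cT_\ast}))=0$ on $F$. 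Thus the nonlinear contribution to $\mathrm{osc}^2_\cT-2\,\mathrm{osc}^2_{\cT_\ast}$ vanishes identically, consistent with $\tilde f=0$ in \eqref{eftilde}. For $f=f_2$: the projection is only onto $P_{m-1}(F)$, so the exponential part does \emph{not} cancel; we simply keep it, bounding $h_T\|(\mathrm{id}-\Pi_F)(f_2(v_{\cT})-f_2(v_{\cT_\ast}))\|^2_{L^2(F)}\leq h_T\|f_2(v_{\cT})-f_2(v_{\cT_\ast})\|^2_{L^2(F)}=h_F\|\tilde f(v_{\cT})-\tilde f(v_{\cT_\ast})\|^2_{L^2(F)}$ (using $h_F=h_T$ for $F\in\partial T$, up to shape-regularity). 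In both cases the cathode term is either zero or exactly of the form appearing on the right-hand side of the assertion.

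\medskip
\textbf{Step 3 (summation over $\mathcal{T}\cap\mathcal{T}_\ast$).} Sum the local estimate of Step~1 (with the cathode term simplified by Step~2) over all $T\in\mathcal{T}\cap\mathcal{T}_\ast$. The $\mathrm{osc}^2_{\cT_\ast}(v_{\cT_\ast},T)$ terms assemble to $2\,\mathrm{osc}^2_{\cT_\ast}(v_{\cT_\ast},\mathcal{T}\cap\mathcal{T}_\ast)$; the $\|\nabla e\|^2_{L^2(\omega_T)}$ terms assemble to at most $\Const{ov}\|e\|^2_{H^1(\Om)}$ by the finite-overlap bound $\Const{ov}$; and the $h_F\|\tilde f(v_{\cT})-\tilde f(v_{\cT_\ast})\|^2_{L^2(F)}$ terms assemble (with the edge-counting constant absorbed into $\Const{op}$) into $\sum_{F\in\mathcal{F}_\cT(\Gamma_C)}h_F\|\tilde f(v_{\cT_\ast})-\tilde f(v_{\cT})\|^2_{L^2(F)}$. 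Collecting constants into a single $\Const{op}$ depending only on $C_\cT,m,\sigma,\Const{ov}$ yields the stated inequality.

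\medskip
\textbf{Main obstacle.} The only subtle point is the algebraic cancellation for $f_1$: one must verify that the polynomial degree of the cathode residual $f_1(v_{\cT})+\sigma\nabla v_{\cT}\cdot n$ restricted to a boundary edge is at most $3m$ — the cubic $v_{\cT}^3$ forces degree $3m$, while $\sigma\nabla v_{\cT}\cdot n$ has degree $\leq m-1$ (for piecewise-constant $\sigma$; for $W^{1,\infty}$ piecewise-polynomial $\sigma$ this already sits inside the oscillation's $\sigma$-variation and needs no projection cancellation) — so that the chosen projection space $P_{3m}(F)$ from the definition of $\mathrm{osc}$ exactly kills the difference $f_1(v_{\cT})-f_1(v_{\cT_\ast})$ on shared edges. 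Everything else is a routine repetition of the inverse-estimate bookkeeping already carried out in Lemma~\ref{lem_estredLocal}, but now \emph{without} invoking the Sobolev imbedding \eqref{eq:imb1} on the nonlinear term, which is precisely what keeps $\Const{op}$ independent of $v_\cT$ and $v_{\cT_\ast}$.
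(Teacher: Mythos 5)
Your proposal is correct and follows essentially the same route as the paper's proof: Young's inequality on the local oscillation over $T\in\mathcal{T}\cap\mathcal{T}_\ast$, the $L^2$-contraction property of $\mathrm{id}-\Pi$ combined with inverse estimates for the volume and gradient-jump terms, the $P_{3m}(F)$-projection cancellation of the cubic term for $f=f_1$ (justifying $\tilde f=0$), retention of the explicit $f_2$-difference otherwise, and summation using the finite-overlap constant. The only cosmetic differences are that you fix the Young factor at $2$ from the outset (the paper carries a $\delta$ and then sets $\delta=1$) and that you separate the $\sigma\nabla e\cdot n$ part from the nonlinearity on $\Gamma_C$-edges slightly more carefully before invoking the projection cancellation.
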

\begin{proof}
Let $T\in\cT\cap\cT_{\ast}$ and denote $e:=v_\cT-v_{\cT_*}$. We can obtain from the definition \eqref{localosc} and the Young's inequality that
\begin{align}\label{localosc_per}
\forall \delta>0:\mathrm{osc}_{\cT}^2(v_{\cT},T)&\leq (1+\delta)\mathrm{osc}_{\cT}^2(v_{\cT_{*}},T)
    +(1+\frac{1}{\delta})\Big(h_{T}^2\|R_T(e)-\bar{R}_T(e)\|_{L^2(T)}^2\nonumber\\
    &+\sum_{F\in\partial T}h_T\|(J_{F}(v_{\cT})-J_F(v_{\cT_{*}}))
    -(\bar{J}_{F}(v_{\cT})-\bar{J}_F(v_{\cT_{*}}))\|_{L^2(F)}^2\Big).
\end{align}
Together with the inverse estimate \eqref{eq:inv}, we arrive at
\begin{align*}
h_{T}^2\|R_T(e)-\bar{R}_T(e)\|_{L^2(T)}^2&+\sum_{F\in\partial T\backslash\Gamma_C}h_T\|(J_{F}(v_{\cT})-J_F(v_{\cT_{*}}))
    -(\bar{J}_{F}(v_{\cT})-\bar{J}_F(v_{\cT_{*}}))\|_{L^2(F)}^2\\
    &\leq C_{\mathrm{aux},5}\|e\|_{H^1(\omega_T)}^2,
\end{align*}
where $C_{\mathrm{aux},5}$ depends on $C_\cT$, $m$ and $\sigma$.

Let $f=f_1$. Note that since $J_F(v)\in P_{3m}(\partial T\cap\Gamma_C)$ for all $v\in V_{\cT}$ and $T\in \cT$, and since $\bar{J}_F(v)$ is the $L^2$-projection of $J_F(v)$ on $P_{3m}(F)$, therefore, we can obtain
\[
\forall v\in V_{\cT} \text{ and } F\in \partial T\cap\Gamma_C:\quad J_F(v)-\bar{J}_F(v)=0.
\]
Furthermore, let $f=f_2$. In view that $\bar{J}_F(v)$ is the $L^2$-projection of $J_F(v)$ onto $P_{m-1}(F)$, we derive
\[
\forall F\in \partial T\cap\Gamma_C:\quad
\|(J_{F}(v_{\cT})-J_F(v_{\cT_{*}}))
    -(\bar{J}_{F}(v_{\cT})-\bar{J}_F(v_{\cT_{*}}))\|_{L^2(F)}\leq \|J_{F}(v_{\cT})-J_F(v_{\cT_{*}})\|_{L^2(F)}.
\]
Plugging those estimates above into \eqref{localosc_per} leads to
\begin{align}\label{eq:333}
\forall \delta>0:\mathrm{osc}_{\cT}^2(v_{\cT},T)&\leq (1+\delta)\mathrm{osc}_{\cT}^2(v_{\cT_{*}},T)
    +(1+\frac{1}{\delta})\Big(C_{\mathrm{aux},5}\|e\|_{H^1(\omega_T)}^2\nonumber\\
    &+2\sum_{F\in\partial T\cap\Gamma_C}h_T\|\tilde{f}(v_{\cT})-\tilde{f}(v_{\cT_{*}})\|_{L^2(F)}^2\Big).
\end{align}
In the meanwhile, note that $\mathrm{osc}_{\cT_{\ast}}(v_{\cT},T)=\mathrm{osc}_{\cT}(v_{\cT},T)$ and $v_{\cT}\in V_{\cT_{*}}$, then summing over $T\in \cT\cap\cT_{\ast}$ in \eqref{eq:333} leads to
\begin{align*}
\mathrm{osc}_{\cT}^2(v_{\cT},\cT\cap\cT_{\ast})\leq
(1+\delta)\mathrm{osc}_{\cT_{\ast}}^2(v_{\cT_{\ast}},\cT\cap\cT_{\ast})&+(1+\frac{1}{\delta})\Big(C_{\rm{aux},5}
C_{\text{ov}}\normH{v_{\cT_{\ast}}-v_{\cT}}{\Om}^2\\
&+2\sum_{F\in\mathcal{F}_{\cT}(\Gamma_C)}h_{F}\|\tilde{f}(v_{\cT_{\ast}})-\tilde{f}(v_{\cT})\|_{L^2(F)}^2\Big).
\end{align*}
The desired result follows by taking
$\delta:=1$ and $
\Const{op}:=2\times\max\{C_{\rm{aux},5}
C_{\text{ov}},2\}.
$
\end{proof}

\begin{lem}[Discrete reliability]\label{lem_disrel}
Let $\mathcal{T}, \mathcal{T}_\ast\in\mathbb{T}$ with $\mathcal{T}_\ast$ being a refinement of $\mathcal{T}$ and let $u_\mathcal{T}\in V_\mathcal{T}$, $u_{\mathcal{T}_\ast}\in
V_{\mathcal{T}_\ast}$ be solutions to problem \eqref{disvp} over $\mathcal{T}$ and $\mathcal{T}_\ast$,  respectively. Then there exists $C_{drel}>0$ depending only on $\sigma$ $\alpha$, $\Omega$, $\Gamma_C$, $m$ and $C_{\mathcal{T}}$ such that
\begin{equation*}
\|u_{\mathcal{T}_\ast}-u_\mathcal{T}\|_{H^1(\Om)}^2\leq \Const{drel}\eta^2_{\mathcal{T}}(u_\mathcal{T},\mathcal{T}\setminus\mathcal{T}_\ast).
\end{equation*}
\end{lem}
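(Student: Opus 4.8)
The plan is to mimic the reliability proof of Theorem \ref{thm_rel}, but to test the error with the discrete difference $u_{\cT_\ast}-u_\cT$ and to exploit that only the elements in $\cT\setminus\cT_\ast$ contribute. First I would set $e:=u_{\cT_\ast}-u_\cT\in V_{\cT_\ast}^m$. Using \eqref{norm_equ}, \eqref{prop_ef} and the mean value theorem applied to $f$ between $u_\cT$ and $u_{\cT_\ast}$, together with the discrete equation \eqref{disvp} on $\cT_\ast$ tested with $v_{\cT_\ast}:=e$, I obtain
\begin{equation*}
\beta_1\|e\|_{H^1(\Om)}^2\leq \int_\Om\sigma\nabla u_{\cT_\ast}\cdot\nabla e\dx{x}+\int_{\Gamma_C}f(u_{\cT_\ast})e\dx{s}-\int_\Om\sigma\nabla u_\cT\cdot\nabla e\dx{x}-\int_{\Gamma_C}f(u_\cT)e\dx{s}=\int_{\Gamma_A}ge\dx{s}-\int_\Om\sigma\nabla u_\cT\cdot\nabla e\dx{x}-\int_{\Gamma_C}f(u_\cT)e\dx{s}.
\end{equation*}
Now the crucial step: since $\cT_\ast$ refines $\cT$, every element $T\in\cT\cap\cT_\ast$ carries a full set of degrees of freedom for $V_{\cT_\ast}^m$ that already belong to $V_\cT^m$ on that region, so one can subtract the Galerkin equation \eqref{disvp} on $\cT$ tested with the Scott--Zhang (or Clément) quasi-interpolant $I_{\cT}^{sz}e\in V_\cT^m$. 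This yields, for $w:=e-I_\cT^{sz}e$,
\begin{equation*}
\beta_1\|e\|_{H^1(\Om)}^2\leq \int_{\Gamma_A}gw\dx{s}-\int_\Om\sigma\nabla u_\cT\cdot\nabla w\dx{x}-\int_{\Gamma_C}f(u_\cT)w\dx{s}.
\end{equation*}

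Next I would integrate by parts elementwise exactly as in Theorem \ref{thm_rel}, rewriting the right-hand side as a sum over $T\in\cT$ of the element residual $R_T(u_\cT)$ tested against $w$ plus edge residuals $J_F(u_\cT)$ tested against $w$. The key localization is that $w=e-I_\cT^{sz}e$ vanishes in a strong enough sense on the part of $\Om$ that is \emph{not} touched by refinement: using a Scott--Zhang interpolant adapted so that it reproduces $e$ on any element of $\cT$ all of whose neighbours lie in $\cT\cap\cT_\ast$ (this is the standard device — one keeps the degrees of freedom on $\cT\cap\cT_\ast$ where $e$ is already in $V_\cT^m$), the contributions from elements and edges in the unrefined interior cancel, leaving only terms supported on elements and edges in $\cT\setminus\cT_\ast$ (and on their patch). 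Then applying the local interpolation estimate \eqref{err:local} edge- and element-wise, Cauchy--Schwarz, and the finite overlap constant $C_{\text{ov}}$, the right-hand side is bounded by $\|e\|_{H^1(\Om)}\,\eta_\cT(u_\cT,\cT\setminus\cT_\ast)$ up to a constant involving $C_I$, $C_{\text{ov}}$; absorbing one factor of $\|e\|_{H^1(\Om)}$ gives the claim with $C_{drel}$ of the form $C_I^2C_{\text{ov}}/\beta_1^2$ (times $m$-dependent constants from comparing $\bar{J}_F$-free residuals).

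The main obstacle is the bookkeeping for the cancellation: one must argue carefully that, after subtracting the $\cT$-Galerkin equation tested with the quasi-interpolant, every residual term localized to an element or edge that is untouched by the refinement indeed drops out. This requires choosing the quasi-interpolation operator so that $I_\cT^{sz}e=e$ on elements whose entire patch $\om_T$ lies in $\cT\cap\cT_\ast$ — i.e., the interpolant only "sees" the refined region — which is possible because $e\in V_{\cT_\ast}^m$ is a polynomial of degree $\le m$ on such $T$ and all the Scott--Zhang nodal functionals on that patch can be taken on faces shared with $\cT\cap\cT_\ast$. A secondary, minor point is that the $\Gamma_C$-edge residual involves the nonlinear $f(u_\cT)$ rather than $g$; but this term is handled identically, since it is tested against $w$ which again is supported only near $\cT\setminus\cT_\ast$, and no growth estimate on $f$ is needed here because $u_\cT$ is fixed and enters only through its trace, already controlled by \eqref{stab_disc} and \eqref{eq:imb1}. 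Thus the nonlinearity causes no real difficulty for this particular lemma; the content is purely the standard discrete-reliability localization argument transplanted to the present bilinear-plus-boundary-residual setting.
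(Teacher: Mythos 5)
Your proposal is correct and follows essentially the same route as the paper: the paper's proof is precisely ``apply the argument of Theorem \ref{thm_rel} with $v:=u_{\cT_\ast}-u_\cT$, using that $v=I^{sz}_{\cT}v$ on unrefined elements'' so that only residuals near $\cT\setminus\cT_\ast$ survive, with $\Const{drel}=\Const{rel}$. If anything you are more explicit than the paper about the one genuine subtlety (the Scott--Zhang operator reproduces $e$ on $T$ only when the whole patch $\om_T$ is unrefined, so the localization a priori yields the estimator over $\cT\setminus\cT_\ast$ plus a layer of neighbouring elements), and your observation that the nonlinearity plays no role here matches the paper's treatment.
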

\begin{proof}
Using the operator $I_\mathcal{T}^{sz}$ \eqref{err:local} to $v:=u_{\mathcal{T}_\ast}-u_\mathcal{T}$ and noting $v=I_\mathcal{T}^{sz}v$ on
unrefined elements in $\mathcal{T}\cap\mathcal{T}_\ast$, the argument of Theorem \ref{thm_rel} completes the proof with $\Const{drel}:=\Const{rel}$.
\end{proof}

\section{Convergence}\label{sec:Convergence}

Now we show each iteration of Algorithm \ref{afem}  reduces the sum of the equivalent error and the scaled estimator, which  implies the convergence of the algorithm.
\begin{thm}[Contraction Property]\label{thm_conv}
Let $u\in H^1(\Om)$ be the solution to problem \eqref{vp} and $\{\mathcal{T}_k,V_k,u_k\}$ be a sequence of meshes, finite element
spaces and discrete solutions by Algorithm \ref{afem}. Then there exist constants $0<\mu<1$ and $\beta>0$
depending on $C_{\mathcal{T}_0}$ and $\theta$ such that
\begin{equation*}
E(u_{k+1})+\beta\eta_{k+1}^2(u_{k+1})\leq\mu(E (u_{k})+\beta\eta_{k}^2(u_k)).
\end{equation*}
\end{thm}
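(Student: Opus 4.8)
The plan is to run the by-now-standard CKNS-type contraction argument, with the Galerkin orthogonality replaced by the telescoping identity of Remark~\ref{rem_equiv} and the energy-norm equivalence of Lemma~\ref{lem_equiv}. First I would fix $\delta>0$ small enough that
\[
\gamma:=(1+\delta)(1-\lambda\theta)<1,
\]
which is possible since $\lambda=1-\tfrac{1}{\sqrt2}>0$ and $\theta\in(0,1]$. Applying the estimator reduction (Lemma~\ref{lem_estred}) with $\cT:=\cT_k$, $\cT_\ast:=\cT_{k+1}$, $\mathcal{M}:=\mathcal{M}_k$ and invoking the Dörfler property \eqref{marking}, so that $\eta_k^2(u_k)-\lambda\eta_k^2(u_k,\mathcal{M}_k)\leq(1-\lambda\theta)\eta_k^2(u_k)$, yields
\[
\eta_{k+1}^2(u_{k+1})\leq\gamma\,\eta_k^2(u_k)+C_{\mathrm{est}}\|u_{k+1}-u_k\|_{H^1(\Om)}^2,
\]
where $C_{\mathrm{est}}$ is henceforth a fixed constant (the value of $\delta$ having been pinned down).

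For the energy error I would use $E(u_{k+1})=E(u_k)-\bigl(\mathcal{J}(u_k)-\mathcal{J}(u_{k+1})\bigr)$ from Remark~\ref{rem_equiv}, together with the lower bound of Lemma~\ref{lem_equiv} applied to the pair $u_{k+1},u_k$ (legitimate since $u_k\in V_{k+1}$ and $u_{k+1}$ minimizes $\mathcal{J}$ over $V_{k+1}$, as noted in Remark~\ref{rem_equiv}), giving $\mathcal{J}(u_k)-\mathcal{J}(u_{k+1})\geq c_{\mathrm{equ}}\|u_k-u_{k+1}\|_{H^1(\Om)}^2$ and hence
\[
E(u_{k+1})\leq E(u_k)-c_{\mathrm{equ}}\|u_k-u_{k+1}\|_{H^1(\Om)}^2.
\]
Adding $\beta$ times the estimator-reduction inequality and choosing $\beta>0$ small enough that $\beta C_{\mathrm{est}}\leq c_{\mathrm{equ}}$, the combined $\|u_k-u_{k+1}\|_{H^1(\Om)}^2$-terms have non-positive sum and can be discarded, which leaves
\[
E(u_{k+1})+\beta\eta_{k+1}^2(u_{k+1})\leq E(u_k)+\beta\gamma\,\eta_k^2(u_k).
\]

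To upgrade this to a genuine contraction I would exploit reliability: Theorem~\ref{thm_rel} together with the upper bound in Lemma~\ref{lem_equiv} gives $E(u_k)\leq C_{\mathrm{equ}}C_{\mathrm{rel}}\,\eta_k^2(u_k)$. Writing $E(u_k)=(1-\alpha)E(u_k)+\alpha E(u_k)$ with $\alpha\in(0,1)$ and bounding the second copy by this estimate produces
\[
E(u_{k+1})+\beta\eta_{k+1}^2(u_{k+1})\leq(1-\alpha)E(u_k)+\bigl(\alpha C_{\mathrm{equ}}C_{\mathrm{rel}}+\beta\gamma\bigr)\eta_k^2(u_k).
\]
Since $\gamma<1$, one may choose $\alpha$ so small that $\mu_0:=\gamma+\alpha C_{\mathrm{equ}}C_{\mathrm{rel}}/\beta<1$, and then set $\mu:=\max\{1-\alpha,\mu_0\}\in(0,1)$; this gives $E(u_{k+1})+\beta\eta_{k+1}^2(u_{k+1})\leq\mu\bigl(E(u_k)+\beta\eta_k^2(u_k)\bigr)$, with $\mu$ and $\beta$ depending only on $C_{\cT_0}$ and $\theta$ (through $C_{\mathrm{est}},C_{\mathrm{rel}},C_{\mathrm{equ}},c_{\mathrm{equ}},\lambda$).

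The main obstacle is the absence of the Pythagoras identity: unlike the linear symmetric case one cannot propagate $\|u-u_k\|_{H^1(\Om)}^2$ directly, because the two constants $c_{\mathrm{equ}}$ and $C_{\mathrm{equ}}$ in Lemma~\ref{lem_equiv} do not coincide. The remedy is to carry the energy quantity $E(u_k)=\mathcal{J}(u_k)-\mathcal{J}(u)$ itself through the argument, using the exact telescoping identity of Remark~\ref{rem_equiv} in place of orthogonality and invoking Lemma~\ref{lem_equiv} only to bound the increment $\|u_k-u_{k+1}\|_{H^1(\Om)}^2$ from below and (via Theorem~\ref{thm_rel}) $E(u_k)$ from above. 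A secondary point demanding care is the order of quantification — $\delta$ must be fixed before $C_{\mathrm{est}}$ is frozen, and only afterwards are $\beta$ and $\alpha$ selected in terms of the resulting constants — so that the final $\mu,\beta$ depend solely on $C_{\cT_0}$ and $\theta$.
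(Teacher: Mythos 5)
Your proof is correct and follows essentially the same route as the paper: the telescoping identity of Remark~\ref{rem_equiv}, the lower bound of Lemma~\ref{lem_equiv} applied to the pair $(u_{k+1},u_k)$ to absorb $\|u_{k+1}-u_k\|_{H^1(\Om)}^2$, estimator reduction (Lemma~\ref{lem_estred}) combined with the D\"orfler property \eqref{marking}, and reliability (Theorem~\ref{thm_rel}) to close the loop. The only, immaterial, difference is bookkeeping: the paper donates half of the marked-estimator reduction $\lambda\theta\eta_k^2(u_k)$ to contract $E(u_k)$ via reliability, whereas you donate a fraction $\alpha$ of $E(u_k)$ to the estimator side --- the same absorption argument with equivalent constants.
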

\begin{proof}
By the equality
$
E(u_{k+1}):=\mathcal{J}(u_{k+1})-\mathcal{J}(u)=
\mathcal{J}(u_{k})-\mathcal{J}(u)-(\mathcal{J}(u_{k})-\mathcal{J}(u_{k+1}))
$ and Lemma \ref{lem_estred} with $\mathcal{T}=\mathcal{T}_k$ and $\mathcal{T}_\ast=\mathcal{T}_{k+1}$, we can obtain for all $\beta>0$ that
\[
\begin{split}
E(u_{k+1})+\beta\eta_{k+1}^2(u_{k+1})&\leq
E(u_{k})+\beta(1+\delta) (\eta_{k}^2(u_{k})-\lambda\eta_{k}^2(u_{k},\mathcal{M}_k))\\
&\quad-(\mathcal{J}(u_{k})-\mathcal{J}(u_{k+1}))+\beta \Const{est}\|u_{k+1}-u_{k}\|_{H^1(\Omega)}^2.
\end{split}
\]
Then by taking $\beta:=c_{\text{equ}}/C_{\text{est}}$, an application of Lemma \ref{lem_equiv} and Remark \ref{rem_equiv} leads to
\[
E(u_{k+1})+\beta\eta_{k+1}^2(u_{k+1})\leq
E(u_{k})+\beta(1+\delta) \Big(\eta_{k}^2(u_{k})-\lambda\eta_{k}^2(u_{k},\mathcal{M}_k)\Big),
\]
which, together with the marking strategy \eqref{marking}, implies
\[
\begin{split}
E(u_{k+1})+\beta\eta_{k+1}^2(u_{k+1})&\leq
E(u_{k})-\beta(1+\delta)\lambda\frac{\theta}{2}\eta_{k}^2(u_{k})\\
&\quad+\beta(1+\delta)(1-\lambda\frac{\theta}{2})\eta_{k}^2(u_{k}).
\end{split}
\]
Now by Theorem \ref{thm_rel}, Lemma \ref{lem_equiv} and the choice $\beta=c_{\text{equ}}/C_{\text{est}}$, we obtain
\begin{align*}
E(u_{k+1})+\beta\eta_{k+1}^2(u_{k+1})\leq\mu_1(\delta)
E(u_{k})+\mu_2(\delta)\eta_{k}^2(u_{k})
\end{align*}
with
$
\mu_1(\delta):=1-\frac{(1+\delta) c_{\text{equ}}\lambda\theta}{2\Const{equ}\Const{rel}\Const{est}}\text{ and }\mu_2(\delta):=(1+\delta)(1-\lambda\frac{\theta}{2}).
$
The proof is completed by choosing $\delta>0$ small enough such that
\(
0<\mu:=\max(\mu_1(\delta),\mu_2(\delta))<1.
\)
\end{proof}

\section{Quasi-optimality}\label{sec:opt}

Now we give a quasi-optimal convergence rate for Algorithm \ref{afem}. We begin with a generalization of Cea's lemma in Lemma \ref{lem_cea}.
\begin{lem}\label{lem:lower-error}
Let $u$ and $u_\mathcal{T}$ be solutions to problems \eqref{vp} and \eqref{disvp}
over some mesh $\mathcal{T}\in\mathbb{T}$. Then
\begin{equation}\label{opt_total}
    \begin{aligned}
\|u-u_\mathcal{T}\|^2_{H^1(\Om)}+\mathrm{osc}_{\cT}^2(u_\cT)&\leq \Const{qs}\inf_{v_\cT\in V_\cT}\Big(\|u-v_\cT\|^2_{H^1(\Om)}+\mathrm{osc}_\cT^2(v_\cT)\\
&\qquad\qquad+\sum_{F\in\mathcal{F}_\cT(\Gamma_C)}h_F\|\tilde{f}(u)-\tilde{f}(v_\cT)\|^2_{L^2(F)}\Big).
\end{aligned}
\end{equation}
Here, the positive constant $\Const{qs}$ depends on $C_\cT$, $m$, $\sigma$, $g$ and $\Const{ov}$.
\end{lem}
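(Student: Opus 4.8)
The plan is to bound the total quantity on the left by testing the error decomposition against an arbitrary $v_\cT\in V_\cT$ and then inserting the finite element solution $u_\cT$ as an intermediate term. First I would start from the C\'ea-type estimate in Lemma~\ref{lem_cea}, which already gives $\|u-u_\cT\|_{H^1(\Om)}^2\leq \Const{cea}\inf_{v_\cT}\|u-v_\cT\|_{H^1(\Om)}^2$; this handles the energy-error part of the left-hand side. It remains to control $\mathrm{osc}_\cT^2(u_\cT)$ by the right-hand side. For that I would apply the oscillation perturbation estimate, Lemma~\ref{lem_osc_p}, but with $\cT_\ast:=\cT$ (so $\cT\cap\cT_\ast=\cT$), $v_{\cT_\ast}:=u_\cT$ and $v_\cT$ the arbitrary test function; this yields
\begin{equation*}
\mathrm{osc}_\cT^2(v_\cT,\cT)\leq 2\,\mathrm{osc}_\cT^2(u_\cT,\cT)+\Const{op}\Big(\|u_\cT-v_\cT\|_{H^1(\Om)}^2+\sum_{F\in\mathcal{F}_\cT(\Gamma_C)}h_F\|\tilde f(u_\cT)-\tilde f(v_\cT)\|_{L^2(F)}^2\Big),
\end{equation*}
and, symmetrically (exchanging the roles of $v_\cT$ and $u_\cT$), an estimate of $\mathrm{osc}_\cT^2(u_\cT,\cT)$ in terms of $\mathrm{osc}_\cT^2(v_\cT,\cT)$ plus the same perturbation terms. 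Using the latter direction is what I actually need here.

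Next I would estimate the nonlinear perturbation term $\sum_F h_F\|\tilde f(u_\cT)-\tilde f(v_\cT)\|_{L^2(F)}^2$. When $f=f_1$ this is zero by the definition \eqref{eftilde}, so nothing is needed. When $f=f_2$ I would use the mean value bound \eqref{eq:mean} together with the generalized H\"older inequality, the trace/inverse estimates \eqref{eq:inv3} and \eqref{eq:imb1}, and Lemma~\ref{lem_ef2}, exactly as in the proof of Lemma~\ref{lem_estredLocal}, to get
\begin{equation*}
\sum_{F\in\mathcal{F}_\cT(\Gamma_C)}h_F\|\tilde f(u_\cT)-\tilde f(v_\cT)\|_{L^2(F)}^2\leq C\Big(\|u-u_\cT\|_{H^1(\Om)}^2+\|u-v_\cT\|_{H^1(\Om)}^2+\sum_{F}h_F\|\tilde f(u)-\tilde f(v_\cT)\|_{L^2(F)}^2\Big),
\end{equation*}
where the constant depends only on the data and the stability bounds \eqref{stab_cont}, \eqref{stab_disc}; the key point is to split $u_\cT-v_\cT=(u_\cT-u)+(u-v_\cT)$ and to replace $\tilde f(u_\cT)-\tilde f(u)$ by a bound proportional to $|u_\cT-u|$ (in the appropriate $L^q$ norm on $\Gamma_C$) using the convexity of $f'$ and the uniform $H^1$-bounds, thereby absorbing it into the energy error. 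The same device applied to $\|u_\cT-v_\cT\|_{H^1(\Om)}^2$ via the triangle inequality produces only terms already present on the right-hand side of \eqref{opt_total} plus $\|u-u_\cT\|_{H^1(\Om)}^2$.

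Finally I would combine: from the reversed version of Lemma~\ref{lem_osc_p} with $\delta:=1$,
\begin{equation*}
\mathrm{osc}_\cT^2(u_\cT)\leq 2\,\mathrm{osc}_\cT^2(v_\cT)+\Const{op}\Big(\|u_\cT-v_\cT\|_{H^1(\Om)}^2+\sum_{F\in\mathcal{F}_\cT(\Gamma_C)}h_F\|\tilde f(u_\cT)-\tilde f(v_\cT)\|_{L^2(F)}^2\Big),
\end{equation*}
then bound $\|u_\cT-v_\cT\|_{H^1(\Om)}^2\leq 2\|u-u_\cT\|_{H^1(\Om)}^2+2\|u-v_\cT\|_{H^1(\Om)}^2$ and the nonlinear term by the estimate above, and use Lemma~\ref{lem_cea} to replace every remaining $\|u-u_\cT\|_{H^1(\Om)}^2$ by $\Const{cea}\|u-v_\cT\|_{H^1(\Om)}^2$. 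Adding this to the C\'ea bound for the energy error and taking the infimum over $v_\cT\in V_\cT$ yields \eqref{opt_total} with $\Const{qs}$ depending only on $\Const{cea}$, $\Const{op}$, $\Const{ov}$, the imbedding/inverse constants and the stability bound $\Csb\normg$, hence on $C_\cT$, $m$, $\sigma$, $g$ and $\Const{ov}$. I expect the main obstacle to be the bookkeeping for $f=f_2$: making sure that the exponential factors from Lemma~\ref{lem_ef2} are controlled purely by the a priori bounds \eqref{stab_cont}–\eqref{stab_disc} (so the constant does not secretly depend on the discrete solution), and that the term $\sum_F h_F\|\tilde f(u)-\tilde f(v_\cT)\|_{L^2(F)}^2$ on the right of \eqref{opt_total} is genuinely what survives rather than an unwanted $\tilde f(u)-\tilde f(u_\cT)$ contribution, which must be absorbed into the energy error via convexity and stability.
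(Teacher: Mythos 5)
Your proposal is correct and follows essentially the same route as the paper: apply the oscillation perturbation lemma with $\cT_\ast:=\cT$ to compare $\mathrm{osc}_\cT^2(u_\cT)$ with $\mathrm{osc}_\cT^2(v_\cT)$, split the perturbation terms by the triangle inequality, absorb $\sum_F h_F\|\tilde f(u)-\tilde f(u_\cT)\|_{L^2(F)}^2$ into $\|u-u_\cT\|_{H^1(\Om)}^2$ via the exponential estimate of Lemma \ref{lem_ef2} together with the a priori bounds \eqref{stab_cont}--\eqref{stab_disc}, and finish with C\'ea's lemma and the infimum. The obstacle you flag at the end is precisely the point the paper handles with its constant $C_{\mathrm{aux},6}$ in \eqref{eq:555}.
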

\begin{proof}
Given $v_{\cT}\in V_{\cT}$. An application of Lemma \ref{lem_osc_p} with $v_\cT:=u_\cT$, $\mathcal{T}_\ast:=\cT$ yields
\begin{align*}
\mathrm{osc}_{\cT}^2(u_\cT)\leq 2\text{osc}_{\cT}^2(v_\cT)+\Const{op}\Big(\|u_{\cT}-v_\mathcal{T}\|^2_{H^1(\Om)}
+\sum_{F\in\mathcal{F}_\cT(\Gamma_C)}h_F
\|\tilde{f}(u_\cT)-\tilde{f}(v_\cT)\|^2_{L^2(F)}\Big).
\end{align*}
Then an application of the triangle inequality yields
\begin{align*}
\mathrm{osc}_{\cT}^2(u_\cT)&\leq 2\text{osc}_{\cT}^2(v_\cT)+2\Const{op}\Big(\|u-v_\mathcal{T}\|^2_{H^1(\Om)}+\|u-u_\mathcal{T}\|^2_{H^1(\Om)}\nonumber\\
&+\sum_{F\in\mathcal{F}_\cT(\Gamma_C)}h_F\|\tilde{f}(u)-\tilde{f}(u_\cT)\|^2_{L^2(F)}+
\sum_{F\in\mathcal{F}_\cT(\Gamma_C)}h_F\|\tilde{f}(u)-\tilde{f}(v_\cT)\|^2_{L^2(F)}\Big).
\end{align*}
Combining with Lemma \ref{lem_cea}, we arrive at
\begin{align}
\|u-u_\mathcal{T}\|^2_{H^1(\Om)}&+\mathrm{osc}_{\cT}^2(u_\cT)
\leq 2\text{osc}_{\cT}^2(v_\cT)+\Big(2\Const{op}+2\Const{op}
\Const{cea}+\Const{cea}\Big)\|u-v_\mathcal{T}\|^2_{H^1(\Om)}\label{eq:444}\\
&+2\Const{op}\Big(\sum_{F\in\mathcal{F}_\cT(\Gamma_C)}h_F\|\tilde{f}(u)-\tilde{f}(u_\cT)\|^2_{L^2(F)}+
\sum_{F\in\mathcal{F}_\cT(\Gamma_C)}h_F\|\tilde{f}(u)-\tilde{f}(v_\cT)\|^2_{L^2(F)}\Big).\nonumber
\end{align}
Let $f:=f_1$. Then $\tilde{f}=0$ by \eqref{eftilde}.
This proves the assertion by taking $\Const{qs}:=\max\{2, 2\Const{op}+2\Const{op}\Const{cea}+\Const{cea}\}$.

Let $f:=f_2$. Then $\tilde{f}=f_2$ by \eqref{eftilde}. We can argue as in the first inequality of \eqref{eq:aux888} to obtain
\begin{align*}
    \|f_2(u)-f_2(u_\cT)\|^2_{L^2(\Gamma_C)}
    &\leq
    4C_5^{2}\Big(C_3^{2}(\int_{\Gamma_C}e^{4C_3 u}\dx{s})^{1/2}+C_3^{2}(\int_{\Gamma_C}e^{4C_3 u_{\cT}}\dx{s})^{1/2}\\
&+C_4^{2}(\int_{\Gamma_C}e^{-4C_4 u}\dx{s})^{1/2}+C_4^{2}(\int_{\Gamma_C}e^{-4C_4 u_{\cT}}\dx{s})^{1/2}   \Big)\|u-u_\mathcal{T}\|_{L^4(\Gamma_C)}^2.
\end{align*}
Then an application of Lemma \ref{lem_ef2}, \eqref{stab_cont}, \eqref{stab_disc} and \eqref{eq:imb1} results in
\begin{align}\label{eq:555}
    \sum_{F\in\mathcal{F}_\cT(\Gamma_C)}h_F\|f_2(u)-f_2(u_\cT)\|^2_{L^2(F)}\leq C_{\rm{aux},6}\|u-u_\mathcal{T}\|_{H^1(\Om)}^2.
\end{align}
Here,
\begin{align*}
    C_{\rm{aux},6}:&=4\const{imb}{4}^{2}C_5^{2}\Big(C_3^{2}(2+2 |\Gamma_C|^{1/2}+e^{8C_3^{2}\Const{exp}C_{\rm{stb}}^2\|g\|_{L^2(\Gamma_A)}^2}+
e^{8C_3^{2}\Const{exp}C_{\rm{stb}}^2\|g\|_{L^2(\Gamma_A)}^2})\\
&+C_4^{2}(2+2 |\Gamma_C|^{1/2}+e^{8C_4^{2}\Const{exp}C_{\rm{stb}}^2\|g\|_{L^2(\Gamma_A)}^2}+e^{8C_4^{2}\Const{exp}C_{\rm{stb}}^2\|g\|_{L^2(\Gamma_A)}^2})\Big).
\end{align*}
This, together with \eqref{eq:444}, yields
\begin{align*}
    \|u-u_\mathcal{T}\|^2_{H^1(\Om)}+\mathrm{osc}_{\cT}^2(u_\cT)&\leq
    2\mathrm{osc}_{\cT}^2(v_\cT)+2C_{\rm{op}}\sum_{F\in\mathcal{F}_\cT(\Gamma_C)}h_F\|\tilde{f}(u)-\tilde{f}(v_\cT)\|^2_{L^2(F)}\\
    &+\Big(2\Const{op}+2\Const{op}\Const{cea}+\Const{cea}+2\Const{op}\Const{cea}\Const{aux,6}\Big)\|u-v_\mathcal{T}\|^2_{H^1(\Om)}.
\end{align*}
By taking  $C_{\rm{qs}}:=\max\{2,2C_{\rm{op}},2\Const{op}+2\Const{op}\Const{cea}+\Const{cea}+2\Const{op}\Const{cea}\Const{aux,6}\}$, we complete the proof.
\end{proof}
\begin{rem}
We refer to the square root of the left hand side of \eqref{opt_total} as the total error. Therefore, Lemma \ref{lem:lower-error} establishes the quasi-optimality of the solution $u_{\cT}$ in the sense of the total error. Note that the right hand side of \eqref{opt_total} can also be controlled by the total error:
\begin{equation*}
\begin{aligned}
\inf_{v_\cT\in V_\cT}\Big(\|u-v_\cT\|^2_{H^1(\Om)}&+\mathrm{osc}_\cT^2(v_\cT)
+\sum_{F\in\mathcal{F}_\cT(\Gamma_C)}h_F\|\tilde{f}(u)-\tilde{f}(v_\cT)\|^2_{L^2(F)}\Big)\\
&\leq(1+\const{aux}{6})\|u-u_\mathcal{T}\|^2_{H^1(\Om)}+\mathrm{osc}_{\cT}^2(u_\cT).
\end{aligned}
\end{equation*}
This estimate can be derived from \eqref{eq:555} directly.
\end{rem}
Next, we introduce the approximation class. Let $\mathbb{T}_{N}\subset \mathbb{T}$ be a subset consisting of all triangulation $\mathcal{T}\in\mathbb{T}$ satisfying
$\#\mathcal{T}-\#\mathcal{T}_{0}\leq N$. The approximation class $\mathbb{A}_{s}$ for
$0<s\leq m/2$ is defined by
\begin{equation*}
\mathbb{A}_{s}:=\big\{(u)\big|~|(u)|_{s}:=
\sup_{N>0}N^{s}\xi(N;u,\sigma)<+\infty\big\}
\end{equation*}
with
\[
    \xi(N;u,\sigma):=\displaystyle{\inf_{\mathcal{T}\in\mathbb{T}_{N}}\inf_{v_{\mathcal{T}}\in V_{\mathcal{T}}}}\Big(   \|u-v_{\mathcal{T}}\|_{H^1(\Omega)}^{2}+\mathrm{osc}_{\mathcal{T}}^{2}(v_{\cT})+\sum_{F\in\mathcal{F}_\cT(\Gamma_C)}h_F\|\tilde{f}(u)-\tilde{f}(v_\cT)\|^2_{L^2(F)}\Big)^{1/2}.
\]
The upper bound $m/2$ is attained for the uniform refinement.

Then we give the fundamental ingredients in the analysis, i.e., the optimal marking and cardinality of $\mathcal{M}_k$. We follow \cite{ffp} to derive the optimal marking that relates a strict error estimator reduction to D\"{o}rfler marking. This type of estimate was first given in \cite{stev1} for the Poisson equation with an $H^1$-norm reduction, and then extended in \cite{ckns} to the total error for linear elliptic problems. Below, we present a version in terms of the error estimator as in \cite{ffp}, the proof of which does not require the efficiency estimate in Theorem \ref{thm_eff}.
\begin{lem}[Optimal marking]\label{lem_optmar}
Suppose that the marking parameter $\theta$ in \eqref{marking} satisfies
\begin{equation}\label{mp_req}
\theta\in(0,1/(1+\Const{est}C_{drel})).
\end{equation}
Let $\cT_\ast\in\mathbb{T}$ be any refinement of $\cT\in\mathbb{T}$ and let $u_\cT\in V_\cT$, $u_{\cT_\ast}\in V_{\cT_\ast}$ be solutions
to problem \eqref{disvp} over $\cT$ and $\cT_\ast$, respectively.
For any $\delta>0$, define
\[\lambda:=({1-(1+C_{est}C_{drel})\theta})/{(1+\delta)}\in(0,1).\]
Assume further that
\begin{equation}\label{est_sred}
\eta_{\mathcal{T}_\ast}^2(u_{\cT_\ast})\leq\lambda
\eta_{\mathcal{T}}^2(u_{\cT}).
\end{equation}
Then there holds
\begin{equation*}
\eta_{\cT}^2(u_\cT,\cT\setminus\cT_\ast)\geq\theta\eta_{\cT}^2(u_\cT).
\end{equation*}
\end{lem}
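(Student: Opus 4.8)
The plan is to run a contradiction/comparison argument that converts the hypothesis of strict estimator reduction \eqref{est_sred} into the D\"{o}rfler-type bound on $\cT\setminus\cT_\ast$. First I would split the estimator on $\cT$ along the decomposition $\cT=(\cT\cap\cT_\ast)\cup(\cT\setminus\cT_\ast)$, writing
\[
\eta_\cT^2(u_\cT)=\eta_\cT^2(u_\cT,\cT\cap\cT_\ast)+\eta_\cT^2(u_\cT,\cT\setminus\cT_\ast).
\]
The first summand will be controlled from above by $\eta_{\cT_\ast}^2(u_{\cT_\ast},\cT\cap\cT_\ast)$ plus a perturbation term, using the local estimator perturbation of Lemma \ref{lem_estredLocal} summed over the unrefined elements — this is exactly the content of Remark \ref{rem_estredLocalper}, i.e. inequality \eqref{eq:777}:
\[
\eta_\cT^2(u_\cT,\cT\cap\cT_\ast)\leq(1+\delta)\,\eta_{\cT_\ast}^2(u_{\cT_\ast},\cT\cap\cT_\ast)+C_{\mathrm{est}}\|u_{\cT_\ast}-u_\cT\|_{H^1(\Om)}^2.
\]
Since $\eta_{\cT_\ast}^2(u_{\cT_\ast},\cT\cap\cT_\ast)\leq\eta_{\cT_\ast}^2(u_{\cT_\ast})$, I would then invoke the hypothesis \eqref{est_sred} to replace $\eta_{\cT_\ast}^2(u_{\cT_\ast})$ by $\lambda\,\eta_\cT^2(u_\cT)$.

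Next I would absorb the discretization error term $\|u_{\cT_\ast}-u_\cT\|_{H^1(\Om)}^2$ using the discrete reliability estimate of Lemma \ref{lem_disrel}, which bounds it by $C_{\mathrm{drel}}\,\eta_\cT^2(u_\cT,\cT\setminus\cT_\ast)$. Putting these pieces together gives
\[
\eta_\cT^2(u_\cT)\leq(1+\delta)\lambda\,\eta_\cT^2(u_\cT)+C_{\mathrm{est}}C_{\mathrm{drel}}\,\eta_\cT^2(u_\cT,\cT\setminus\cT_\ast)+\eta_\cT^2(u_\cT,\cT\setminus\cT_\ast),
\]
and recalling the definition $\lambda=(1-(1+C_{\mathrm{est}}C_{\mathrm{drel}})\theta)/(1+\delta)$, the coefficient $(1+\delta)\lambda$ equals $1-(1+C_{\mathrm{est}}C_{\mathrm{drel}})\theta$, so moving that term to the left side yields
\[
(1+C_{\mathrm{est}}C_{\mathrm{drel}})\theta\,\eta_\cT^2(u_\cT)\leq(1+C_{\mathrm{est}}C_{\mathrm{drel}})\,\eta_\cT^2(u_\cT,\cT\setminus\cT_\ast),
\]
and dividing by $(1+C_{\mathrm{est}}C_{\mathrm{drel}})$ gives precisely $\eta_\cT^2(u_\cT,\cT\setminus\cT_\ast)\geq\theta\,\eta_\cT^2(u_\cT)$, as claimed.

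The main obstacle I anticipate is bookkeeping rather than conceptual: I must make sure that the constant $C_{\mathrm{est}}$ appearing in \eqref{eq:777} is the same (or is bounded by the) $C_{\mathrm{est}}$ used in the statement's hypothesis \eqref{mp_req}, and that the parameter $\delta$ chosen here is consistent with the $\delta$ hidden inside $C_{\mathrm{est}}$ in Lemma \ref{lem_estred}/Remark \ref{rem_estredLocalper}. One clean way is to fix $\delta$ first (any $\delta>0$), then note that $C_{\mathrm{est}}=C_{\mathrm{est}}(\delta)$ is determined, and finally observe that the admissibility condition $\theta<1/(1+C_{\mathrm{est}}C_{\mathrm{drel}})$ guarantees $\lambda\in(0,1)$ so the argument closes; the stability bounds \eqref{stab_disc} ensure all constants depend only on data and $C_{\cT_0}$, not on the particular meshes. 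A secondary subtlety is that Remark \ref{rem_estredLocalper} was stated for meshes produced by Algorithm \ref{afem}, whereas here $\cT_\ast$ is an arbitrary refinement of $\cT$; but the local perturbation Lemma \ref{lem_estredLocal} holds for any $\cT$ and any two discrete functions in $V_\cT^m$ (with $u_\cT\in V_\cT^m\subset V_{\cT_\ast}^m$), so summing over $\cT\cap\cT_\ast$ is legitimate without any marking hypothesis — I would spell this out to keep the argument self-contained.
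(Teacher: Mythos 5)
Your proposal is correct and follows essentially the same route as the paper's proof: decompose $\eta_\cT^2(u_\cT)$ over $\cT\cap\cT_\ast$ and $\cT\setminus\cT_\ast$, apply the perturbation estimate \eqref{eq:777} on the unrefined part, insert the hypothesis \eqref{est_sred}, absorb $\|u_{\cT_\ast}-u_\cT\|_{H^1(\Om)}^2$ via discrete reliability (Lemma \ref{lem_disrel}), and rearrange using the definition of $\lambda$. Your remarks on the consistency of $C_{\text{est}}(\delta)$ and on the validity of \eqref{eq:777} for arbitrary refinements are sound and, if anything, make the argument more careful than the paper's.
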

\begin{proof}
We get by the estimate \eqref{eq:777} in Remark \ref{rem_estredLocalper}, \eqref{est_sred} and the discrete reliability estimate (Lemma \ref{lem_disrel}) that
\begin{align*}
\eta_{\cT}^2(u_\cT)&=\eta_{\cT}^2(u_\cT,\cT\setminus\cT_\ast)
+\eta_{\cT}^2(u_\cT,\cT\cap\cT_\ast)\\
&\leq \eta_{\cT}^2(u_\cT,\cT\setminus\cT_\ast)+(1+\delta)\eta^2_{\cT_\ast}(u_{\cT_\ast},\cT\cap\cT_\ast)
+\Const{est}\|u_{\cT_\ast}-u_\cT\|^2_{H^1(\Om)}\\
&\leq\eta_{\cT}^2(u_\cT,\cT\setminus\cT_\ast)+(1+\delta)\lambda\eta_{\cT}^2(u_\cT)+\Const{est}\|u_{\cT_\ast}-u_\cT\|^2_{H^1(\Om)}\\
&\leq(1+\Const{est}\Const{drel})\eta_{\cT}^2(u_\cT,\cT\setminus\cT_\ast)+(1+\delta)\lambda\eta_{\cT}^2(u_\cT).
\end{align*}
A direct calculation leads to
\begin{equation*}
\eta_{\cT}^2(u_\cT,\cT\setminus\cT_\ast)\geq\dfrac{1-(1+\delta)\lambda}{1+\Const{est}\Const{drel}}\eta_{\cT}^2(u_\cT)=\theta\eta_{\cT}^2(u_\cT).
\end{equation*}
This proves the assertion.
\end{proof}

\begin{lem}[Cardinality of $\mathcal{M}_k$]\label{lem_card}
Assume that condition \eqref{mp_req} holds. Let $u$ be the solution to problem \eqref{vp} and let $\{\cT_k,V_k,u_k\}$ be the sequence of meshes, finite element spaces and discrete solutions generated by Algorithm \ref{afem}. If $(u)\in\mathbb{A}_s$, then with $\lambda$  in Lemma \ref{lem_optmar}, there holds
\begin{equation*}
\#\mathcal{M}_k\leq\Big(C_{qs}(C_{rel}+1)/\lambda C_{eff}\Big)^{1/2s}|(u)|^{1/s}_s\left(\|u-u_k\|_{H^1(\Om)}^2+\mathrm{osc}^2_k(u_k)\right)^{-1/2s}.
\end{equation*}
\end{lem}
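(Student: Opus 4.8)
The plan is to follow the standard Stevenson--type argument (as in \cite{stev1,ckns,ffp}) that combines the optimal marking lemma with the definition of the approximation class. First I would fix the iteration index $k$ and invoke the reliability estimate of Theorem~\ref{thm_rel} to bound the total error $\|u-u_k\|_{H^1(\Om)}^2 + \mathrm{osc}_k^2(u_k)$ from below in terms of a suitably scaled estimator: since $\mathrm{osc}_k^2(u_k)\le\eta_k^2(u_k)$ and $\|u-u_k\|_{H^1(\Om)}^2\le \Const{rel}\,\eta_k^2(u_k)$, one gets $\|u-u_k\|_{H^1(\Om)}^2+\mathrm{osc}_k^2(u_k)\le (\Const{rel}+1)\,\eta_k^2(u_k)$. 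This gives a concrete target tolerance $\varepsilon^2 := \lambda\, C_{eff}\,\big(\|u-u_k\|_{H^1(\Om)}^2+\mathrm{osc}_k^2(u_k)\big)/\big(C_{qs}(\Const{rel}+1)\big)$ for which I want to exhibit a coarse triangulation.

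Next, using $(u)\in\mathbb{A}_s$, I would choose the smallest $N$ so that $|(u)|_s^2 N^{-2s}\le \varepsilon^2$, and pick $\cT_\varepsilon\in\mathbb{T}_N$ together with $v_\varepsilon\in V_{\cT_\varepsilon}$ realizing (up to the infimum) the bound $\|u-v_\varepsilon\|_{H^1(\Om)}^2+\mathrm{osc}_{\cT_\varepsilon}^2(v_\varepsilon)+\sum_{F\in\mathcal{F}_{\cT_\varepsilon}(\Gamma_C)}h_F\|\tilde f(u)-\tilde f(v_\varepsilon)\|_{L^2(F)}^2\le \varepsilon^2$. Let $\cT_\ast := \cT_\varepsilon \oplus \cT_k$ be the coarsest common refinement (overlay). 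Applying the quasi-optimality Lemma~\ref{lem:lower-error} on $\cT_\ast$ together with the inclusion $V_{\cT_\varepsilon}\subset V_{\cT_\ast}$ and the monotonicity of oscillation and of the $h_F$-weighted term under refinement (so that the value at $v_\varepsilon$ on $\cT_\ast$ does not exceed its value on $\cT_\varepsilon$), I obtain $\|u-u_{\cT_\ast}\|_{H^1(\Om)}^2+\mathrm{osc}_{\cT_\ast}^2(u_{\cT_\ast})\le \Const{qs}\,\varepsilon^2$. By Theorem~\ref{thm_eff} this yields $\eta_{\cT_\ast}^2(u_{\cT_\ast})\le C_{eff}^{-1}\Const{qs}\,\varepsilon^2 \le \lambda\,(\Const{rel}+1)^{-1}\big(\|u-u_k\|_{H^1(\Om)}^2+\mathrm{osc}_k^2(u_k)\big)\le \lambda\,\eta_k^2(u_k)$, i.e. the strict estimator reduction hypothesis \eqref{est_sred} of Lemma~\ref{lem_optmar} holds for the pair $(\cT_k,\cT_\ast)$.

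Then Lemma~\ref{lem_optmar} gives $\eta_k^2(u_k,\cT_k\setminus\cT_\ast)\ge\theta\,\eta_k^2(u_k)$, so $\cT_k\setminus\cT_\ast$ is a valid Dörfler set for $\cT_k$; since Algorithm~\ref{afem} chooses $\mathcal{M}_k$ of minimal cardinality, $\#\mathcal{M}_k\le \#(\cT_k\setminus\cT_\ast)$. Finally, the standard overlay estimate $\#(\cT_k\setminus\cT_\ast)\le \#\cT_\ast-\#\cT_k\le \#\cT_\varepsilon-\#\cT_0\le N$ bounds $\#\mathcal{M}_k$ by $N$, and substituting the choice of $N$, namely $N\simeq |(u)|_s^{1/s}\varepsilon^{-1/s}$, and unwinding the definition of $\varepsilon$ produces exactly $\#\mathcal{M}_k\le \big(\Const{qs}(\Const{rel}+1)/(\lambda\,C_{eff})\big)^{1/2s}|(u)|_s^{1/s}\big(\|u-u_k\|_{H^1(\Om)}^2+\mathrm{osc}_k^2(u_k)\big)^{-1/2s}$. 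The main obstacle I anticipate is the bookkeeping at the overlay step: one must verify carefully that passing from $\cT_\varepsilon$ to $\cT_\ast$ does not inflate any of the three terms in the approximation-class functional — this is routine for $\|u-v_\varepsilon\|_{H^1(\Om)}^2$ and for $\mathrm{osc}^2$, but the nonlinear boundary term $\sum_F h_F\|\tilde f(u)-\tilde f(v_\varepsilon)\|_{L^2(F)}^2$ requires that refining edges on $\Gamma_C$ only decreases the sum (which holds since $h_F$ shrinks and the $L^2$-masses add), and one must keep track of the constants from Theorem~\ref{thm_eff} and Lemma~\ref{lem:lower-error} being independent of $\cT_\ast$, which is guaranteed by the uniform shape-regularity under bisection.
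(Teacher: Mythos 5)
Your proposal is correct and follows essentially the same route as the paper: the same choice of tolerance $\varepsilon^2=\lambda C_{\text{eff}}\bigl(\Const{qs}(\Const{rel}+1)\bigr)^{-1}\bigl(\|u-u_k\|_{H^1(\Om)}^2+\mathrm{osc}_k^2(u_k)\bigr)$, the overlay $\cT_\ast=\cT_\varepsilon\oplus\cT_k$, Lemma \ref{lem:lower-error} plus Theorem \ref{thm_eff} to verify the hypothesis \eqref{est_sred} of Lemma \ref{lem_optmar}, and minimality of $\mathcal{M}_k$ together with the overlay cardinality bound. Your explicit check that the nonlinear boundary term $\sum_F h_F\|\tilde f(u)-\tilde f(v_\varepsilon)\|_{L^2(F)}^2$ does not increase under refinement is a point the paper passes over silently, and it is handled correctly.
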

\begin{proof}
The proof is similar to \cite[Lemma 5.10]{ckns}. The assumption $(u)\in\mathbb{A}_{s}$ ensures that for
\begin{align}\label{eq:eps}
\varepsilon^2:=\lambda C_{\rm{eff}}\Big(\Const{qs}(C_{\rm{rel}}+1)\Big)^{-1}\left(\|u-u_k\|_{H^1(\Om)}^2+\mathrm{osc}^2_k(u_k)\right)
\end{align}
with a fixed $k\in\mathbb{N}_{+}$, there exist
a triangulation mesh $\mathcal{T}_\varepsilon\in\mathbb{T}$ and $v_{\varepsilon}\in V_{\mathcal{T}_\varepsilon}$ such that
\begin{equation}\label{lem_card01}
\#\cT_\varepsilon-\#\cT_0\leq|(u)|_s^{1/s}\varepsilon^{-1/s},\quad
\|u-v_{\varepsilon}\|^2_{H^1(\Om)}+\mathrm{osc}_{\cT_\varepsilon}^2(v_\varepsilon)
+\sum_{F\in\mathcal{F}_{\cT_{\varepsilon}}(\Gamma_C)}h_F\|\tilde{f}(u)-\tilde{f}(v_{\cT_{\varepsilon}})\|_{L^2(F)}^2
\leq \varepsilon^2.
\end{equation}
Let $\mathcal{T}_{\ast}$ be the smallest common refinement of $\mathcal{T}_{\varepsilon}$ and $\mathcal{T}_{k}$, i.e.,
$\mathcal{T}_{\ast}:=\mathcal{T}_\varepsilon\oplus\mathcal{T}_{k}$.
Then $\mathcal{T}_{\ast}$ is a refinement of $\mathcal{T}_\varepsilon$. An application of Theorem \ref{thm_eff} and \eqref{opt_total}, combining with the inequality  $\mathrm{osc}_{\cT_\ast}^2(v_{\varepsilon})\leq\mathrm{osc}_{\cT_\varepsilon}^2(v_\varepsilon)$ and \eqref{lem_card01}, leads to
\begin{align*}
C_{\rm{eff}}\eta_{\cT_{\ast}}^2(u_{\cT_{\ast}})&\leq\|u-u_{\cT_\ast}\|_{H^1(\Om)}^2
+\mathrm{osc}_{\cT_\ast}^2(u_{\cT_\ast})\\
&\leq \Const{qs}\Big(\|u-v_\varepsilon\|_{H^1(\Om)}^2
+\mathrm{osc}_{\cT_\varepsilon}^2(v_\varepsilon)
+\sum_{F\in\mathcal{F}_{\cT_{\varepsilon}}(\Gamma_C)}h_F\|\tilde{f}(u)-\tilde{f}(v_{\cT_{\varepsilon}})\|_{L^2(F)}^2\Big)
\leq \Const{qs}\varepsilon^2.
\end{align*}
This, together with Theorem \ref{thm_rel}, the inequality $\mathrm{osc}_{k}^2(u_{k})\leq\eta_{k}^2(u_{k})$ and \eqref{eq:eps}, gives
\[
\eta_{\cT_{\ast}}^2(u_{\cT_{\ast}})\leq\lambda\eta_{k}^2(u_{k}).
\]
Consequently, the subset $\mathcal{T}_{k}\setminus\mathcal{T}_{\ast}$ satisfies the D\"{o}rfler marking strategy owing to Lemma \ref{lem_optmar}. But the module MARK in Algorithm \ref{afem} selects a subset $\mathcal{M}_{k}\subset\mathcal{T}_{k}$ with minimal cardinality
such that the same property holds, which, together with Lemma 3.7 in \cite{ckns}, implies
\begin{equation}\label{lem_card02}
\#\mathcal{M}_k\leq\#\cT_{\ast}-\#\cT_k\leq\#\cT_\varepsilon-\#\cT_0.
\end{equation}
Therefore, the assertion readily follows from \eqref{lem_card01} and \eqref{lem_card02}.
\end{proof}

Now we can establish the quasi-optimality of Algorithm \ref{afem}.
\begin{thm}\label{thm_opt}
Let condition \eqref{mp_req} hold. Let $u$
be the solution to problem \eqref{vp} and
$\{\mathcal{T}_{k},V_{k},u_{k}\}$ be the sequence of
meshes, finite element spaces and discrete solutions generated by
Algorithm \ref{afem}. If $(u)\in\mathbb{A}_{s}$, then there
holds
\begin{equation*}
\|u-u_{k}\|_{H^1(\Om)}^{2}+\mathrm{osc}_{k}^{2}(u_k)\leq
C_{qopt}|(u)|_{s}^{2}(\#\mathcal{T}_{k}-\#\mathcal{T}_{0})^{-2s},
\end{equation*}
where $C_{qopt}$ depends on $C_{\mathcal{T}_{0}}$, $m$, $\mu$ and $\beta$ in Theorem
\ref{thm_conv} but is independent of $s$ or $u$.
\end{thm}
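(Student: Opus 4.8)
\emph{Proof plan.} The strategy is the classical one of \cite{ckns,ffp}: chain the contraction property (Theorem~\ref{thm_conv}), the cardinality bound on the marked sets (Lemma~\ref{lem_card}) and the closure estimate \eqref{stevlemma}. The one point demanding care is the bookkeeping of constants so that the final constant is independent of $s$.

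Abbreviate $\zeta_j:=E(u_j)+\beta\,\eta_j^2(u_j)$ and $\mathcal{E}_j:=\|u-u_j\|_{H^1(\Om)}^2+\mathrm{osc}_j^2(u_j)$, with $\beta$ and $\mu$ the constants from Theorem~\ref{thm_conv}. If $\mathcal{E}_k=0$ (equivalently $\eta_k=0$, by Theorem~\ref{thm_rel} and $\mathrm{osc}_k\le\eta_k$) the asserted bound is trivial, so assume $\mathcal{E}_k>0$; then the algorithm has genuinely refined, $\#\mathcal{T}_k>\#\mathcal{T}_0$, and $\zeta_j,\mathcal{E}_j>0$ for all $0\le j\le k$. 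First I would show that $\zeta_j$ and $\mathcal{E}_j$ are comparable with $s$-independent constants: Lemma~\ref{lem_equiv} gives $E(u_j)\le C_{equ}\|u-u_j\|^2$, Theorem~\ref{thm_rel} gives $\|u-u_j\|^2\le C_{rel}\eta_j^2$, and Theorem~\ref{thm_eff} together with $\mathrm{osc}_j\le\eta_j$ gives $C_{eff}\eta_j^2\le\mathcal{E}_j\le(C_{rel}+1)\eta_j^2$; combining these, $\mathcal{E}_j\ge c_\ast\zeta_j$ with $c_\ast:=C_{eff}/(C_{equ}C_{rel}+\beta)$, while from $\zeta_k\ge\beta\eta_k^2$ we also get $\mathcal{E}_k\le(C_{rel}+1)\beta^{-1}\zeta_k$.

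Next I would bring in the contraction. Iterating Theorem~\ref{thm_conv} gives $\zeta_k\le\mu^{k-j}\zeta_j$, i.e. $\zeta_j\ge\mu^{j-k}\zeta_k$, for $0\le j\le k$; together with the previous step,
\[
\mathcal{E}_j\ \ge\ c_\ast\,\zeta_j\ \ge\ c_\ast\,\mu^{j-k}\,\zeta_k\ \ge\ \nu\,\mu^{j-k}\,\mathcal{E}_k,\qquad\nu:=\frac{c_\ast\,\beta}{C_{rel}+1},
\]
hence $\mathcal{E}_j^{-1/2s}\le\nu^{-1/2s}\mu^{(k-j)/2s}\mathcal{E}_k^{-1/2s}$. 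Now Lemma~\ref{lem_card} reads $\#\mathcal{M}_j\le C_{card}^{1/2s}|(u)|_s^{1/s}\mathcal{E}_j^{-1/2s}$ with $C_{card}:=C_{qs}(C_{rel}+1)/(\lambda C_{eff})$, and the closure estimate \eqref{stevlemma} is $\#\mathcal{T}_k-\#\mathcal{T}_0\le C_0\sum_{j=0}^{k-1}\#\mathcal{M}_j$. Combining them,
\[
\#\mathcal{T}_k-\#\mathcal{T}_0\ \le\ C_0\Big(\frac{C_{card}}{\nu}\Big)^{1/2s}|(u)|_s^{1/s}\,\mathcal{E}_k^{-1/2s}\sum_{i=1}^{k}\mu^{i/2s}\ \le\ \frac{C_0}{1-\mu^{1/m}}\;\Lambda^{1/2s}\,|(u)|_s^{1/s}\,\mathcal{E}_k^{-1/2s},
\]
where $\Lambda:=\mu\,C_{card}/\nu$ and the geometric series is bounded using that $x\mapsto x/(1-x)$ is increasing on $(0,1)$ and $\mu^{1/2s}\le\mu^{1/m}<1$ for $s\in(0,m/2]$.

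Finally, raising to the power $2s$ and solving for $\mathcal{E}_k$ yields
\[
\|u-u_k\|_{H^1(\Om)}^2+\mathrm{osc}_k^2(u_k)=\mathcal{E}_k\ \le\ \Big(\frac{C_0}{1-\mu^{1/m}}\Big)^{2s}\,\Lambda\,|(u)|_s^2\,(\#\mathcal{T}_k-\#\mathcal{T}_0)^{-2s},
\]
and since $0<s\le m/2$ the factor $(C_0/(1-\mu^{1/m}))^{2s}$ is at most $\max\{1,(C_0/(1-\mu^{1/m}))^m\}$, so the claim follows with $C_{qopt}:=\max\{1,(C_0/(1-\mu^{1/m}))^m\}\,\Lambda$, which depends on $C_{\mathcal{T}_0}$, $m$, $\mu$, $\beta$ (and the fixed data constants) but not on $s$ or $u$. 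I expect the only genuinely delicate part to be this last piece of constant-tracking: the $s$-dependent prefactor $\Lambda^{1/2s}$ is harmless because its exponent cancels upon inverting the estimate, and the residual $s$-dependence in $(C_0/(1-\mu^{1/m}))^{2s}$ and in the geometric-series bound stays uniformly controlled exactly because $s$ is confined to the bounded interval $(0,m/2]$; everything else is a routine concatenation of Theorems~\ref{thm_rel}, \ref{thm_eff}, \ref{thm_conv} and Lemmas~\ref{lem_equiv}, \ref{lem_card}.
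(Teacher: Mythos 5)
Your proposal is correct and follows essentially the same route as the paper's proof: closure estimate plus the cardinality bound of Lemma \ref{lem_card}, the equivalence of the total error $\|u-u_j\|_{H^1(\Om)}^2+\mathrm{osc}_j^2(u_j)$ with the contracted quantity $E(u_j)+\beta\eta_j^2(u_j)$ via Lemma \ref{lem_equiv} and Theorems \ref{thm_rel}, \ref{thm_eff}, the contraction of Theorem \ref{thm_conv}, summation of the geometric series, and raising to the power $2s$ with the $s$-dependence absorbed using $0<s\le m/2$. The only differences are cosmetic choices of intermediate constants in the equivalence step, and your explicit handling of the degenerate case $\mathcal{E}_k=0$, which the paper omits.
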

\begin{proof}
Let $M:=(\Const{qs}(C_{\rm{rel}}+1)/\lambda C_{\rm{eff}})^{1/2s}|(u)|^{1/s}_s$. By
\eqref{stevlemma} and Lemma \ref{lem_card}, we deduce
\begin{equation*}
\#\mathcal{T}_k-\#\mathcal{T}_0\leq C_0\sum_{j=0}^{k-1}\#\mathcal{M}_j\leq
C_0M\sum_{j=0}^{k-1}\Big(\|u-u_j\|^2+\mathrm{osc}^2_j(u_j)\Big)^{-1/2s}.
\end{equation*}
Since the oscillation term \eqref{globalosc} is dominated by the global error, we can obtain from
Theorem \ref{thm_eff} and Lemma \ref{lem_equiv} that
\begin{equation*}
\begin{split}
c_{\rm{equ}}\|u&-u_j\|_{H^1(\Om)}^2+\beta\mathrm{osc}^2_j(u_j)\leq \mathcal{J}(u_j)-\mathcal{J}(u)+\beta\eta^2_j(u_j)\\
&\leq(C_{\rm{equ}}+\beta C_{\rm{eff}}^{-1})(\|u-u_j\|_{H^1(\Om)}^2+\mathrm{osc}^2_j(u_j)).
\end{split}
\end{equation*}
Now Theorem \ref{thm_conv} implies that
\begin{equation*}
\mathcal{J}(u_k)-\mathcal{J}(u)+\beta\eta_k^2(u_k)\leq\mu^{k-j}
\big(\mathcal{J}(u_j)-\mathcal{J}(u)+\beta\eta_j^2(u_j)\big), \text{ for } 0\leq j\leq k-1.
\end{equation*}
Now collecting the last three estimates, we arrive at
\begin{align*}
\#\mathcal{T}_k-\#\mathcal{T}_0&\leq C_0M(C_{\mathrm{equ}}+\beta C_{\mathrm{eff}}^{-1})^{1/2s}\sum_{j=0}^{k-1}(\mathcal{J}(u_j)-\mathcal{J}(u)+\beta\eta_j^2(u_j))^{-1/2s}\\
&\leq C_0M(C_{\mathrm{equ}}+\beta C_{\mathrm{eff}}^{-1})^{1/2s}(\mathcal{J}(u_k)-\mathcal{J}(u)+\beta\eta_k^2(u_k))^{-1/2s}\sum_{j=1}^{k}\mu^{j/2s}\\
&\leq C_0C_{\theta}M(C_{\mathrm{equ}}+\beta C_{\mathrm{eff}}^{-1})^{1/2s}(1/\min(c_{\mathrm{equ}},\beta))^{1/2s}(\|u-u_k\|^2+\mathrm{osc}^2_k(u_k))^{-1/2s}
\end{align*}
with $C_\theta:=\mu^{1/2s}/(1-\mu^{1/2s})$ bounding the geometric series. Raising this to the $s$-th power and noting $C_0^s\leq C_0^{m/2}$, $C_\theta^{s}\leq 1/(1-\mu^{1/m})^{s}\leq 1/(1-\mu^{1/m})^{m/2}$ give the desired estimate.
\end{proof}

\section{Numerical results}
Now we present two numerical tests using Algorithm \ref{afem} with affine elements. The implementation of the
algorithm is based on \cite{FunkenPW11}. In the experiments, $\Omega$ is an L-shaped domain $\Omega=[-1,1]^2\backslash ([0,1]\times[-1,0])$,
and $\sigma=1$. The initial mesh  $\mathcal{T}_0$ is a uniform triangulation of the
domain, cf. Fig. \ref{fig:initial_mesh}. At the $k^{\text{th}}$ adaptive iteration with triangulation $\mathcal{T}_k$ and $k=0,1,2,\cdots$, we employ the Newton method to
obtain the corresponding solution $u_k$ (of the nonlinear system). Specifically, we take the
initial guess to be linear interpolation from the previous mesh, i.e.,
\begin{equation*}
u^{(0)}_{k}=
\left\{
\begin{aligned}
&0, &&k=0 \\
&\mathcal{I}_{k}(u_{k-1}), &&k>0
\end{aligned}
\right.
\end{equation*}
with $\mathcal{I}_{k}:V_{k-1}\to V_{k}$ being the linear interpolation operator.
The stopping criterion for the Newton iteration is
\[
\|u_{k}^{(n)}-u_{k}^{(n-1)}\|_{H^1(\Omega)}\leq \epsilon
\]
with $n$ denoting the Newton iteration number, and $\epsilon$ the prescribed accuracy. In the adaptive algorithm, we take $\epsilon=10^{-7}$.
\begin{figure}[htb!]
\centering
\includegraphics[trim={0.5cm 0.5cm 0.5cm 0.5cm},clip,width = .3\textwidth, height  = .3\textwidth]{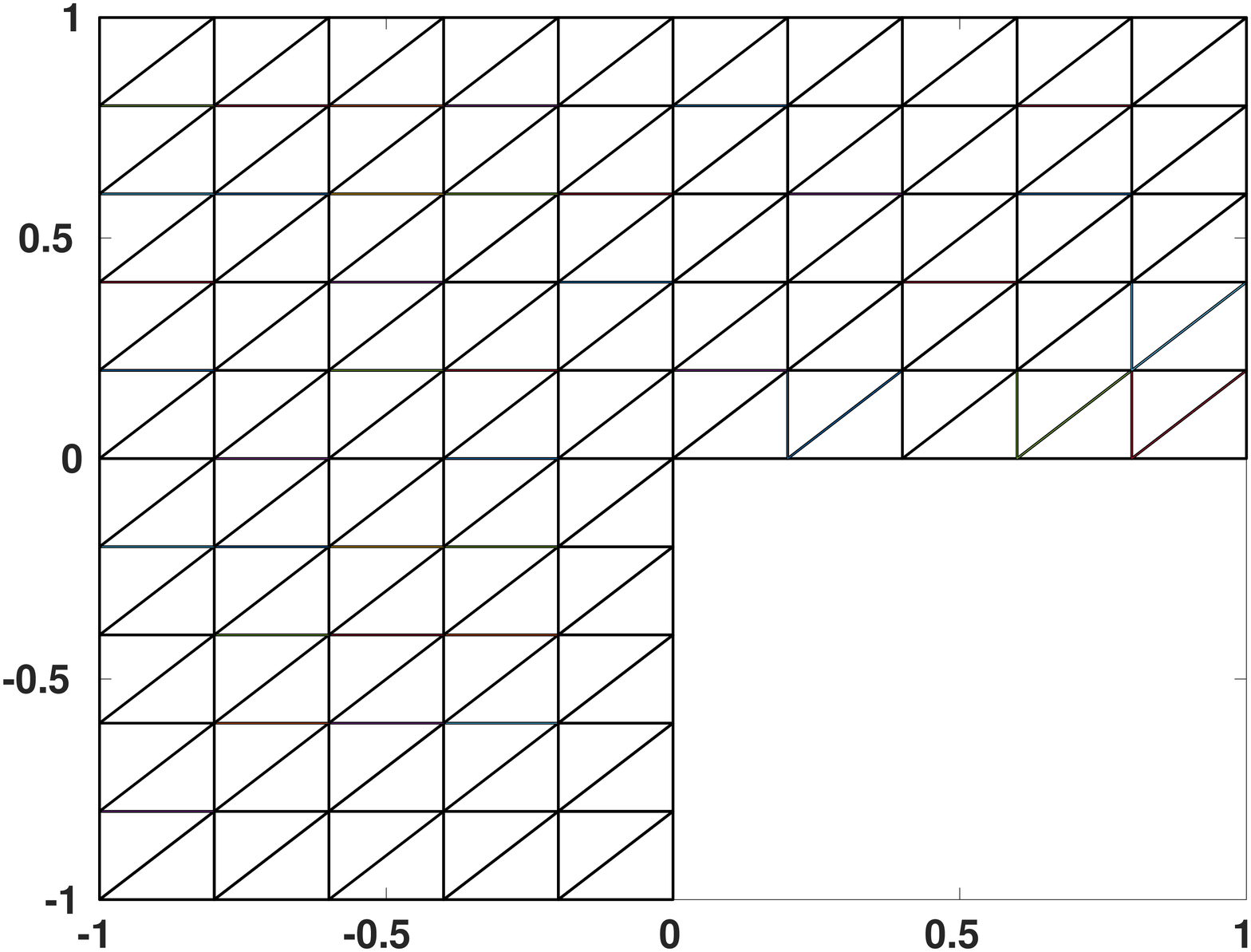}
\caption{Initial uniform triangle mesh with a mesh size $h=0.2$.}
\label{fig:initial_mesh}
\end{figure}
Algorithm \ref{afem} is terminated once the sum of error indicators
$\sum_{T\in \mathcal{T}_k}\eta_k(u_k,T)^2$ falls below a pre-specified
threshold tolerance $\tau$. We take $\tau=10^{-3}$ for both examples below.
After obtaining the adaptive solution $u_k$, we check whether the stopping condition is satisfied. If not, a refinement is
carried out for those with large error indicators.

In our simulation, the D\"{o}rfler bulk criterion is used to mark elements for refinement, 
i.e., given $\theta\in (0,1)$, we look for the
minimal set $\mathcal{M}_k\subset \mathcal{T}_k$ satisfying \eqref{marking}.
In the experiment we show results with $\theta=0.1$ and $\theta=0.3$. To refine the mesh, we apply the newest vertex bisection (NVB) refinement \cite{FunkenPW11} and bisect all
edges of the elements in $\mathcal{M}_k$ to get a finer mesh $\mathcal{T}_{k+1}$.
Alternatively, we can mark only one edge of those elements in $\mathcal{M}_k$ for a refined mesh.
Note that a smaller $\theta$ yields a more adaptive mesh and a larger iteration number.

\begin{example}\label{example:1}
In the first example, denote $\Gamma_0=\emptyset$, $\Gamma_C$ as the left boundary, and $\Gamma_A$ as the rest of the
boundary. We take $g(x,y)=x^2+y^2$ and $f(u)=u+u^3$.
\end{example}

Due to the nonlinearity of the problem, the exact solution is not available. Hence, we use the solution on a very fine
uniform mesh with a mesh size $h=1/2000$ as the reference solution (and analogously, the Newton method is employed with a
much smaller tolerance $\epsilon=10^{-11}$). The solution on an adaptive mesh is shown in Fig. \ref{fig:solutions}(a).
Since the solution singularity is localized around the re-entrant corner of the domain and the corners where the
boundary condition changes, the adaptive algorithm properly refines these regions. In Fig. \ref{fig:solutions_eg1_conv}(a),
we observe a convergence rate $O(N^{-0.51})$ for the error estimator, which agrees well with the convergence rate
$O(N^{-0.54})$ in the $H^1(\Omega)$-norm error of the adaptive solution from Theorem \ref{thm_opt}, numerically verifying the reliability of
the estimator. The adaptive algorithm is more efficient than the uniform refinement. Fig.
\ref{fig:solutions_eg1_conv}(b) displays the convergence history with a larger parameter $\theta=0.3$. We
obtain a smaller iteration number but a larger degrees of freedom over each refinement. The convergence rate
of the error estimator and the $H^1(\Om)$-error are $O(N^{-0.50})$ and $O(N^{-0.53})$, respectively.

\begin{figure}[htb!]
\centering
\begin{subfigure}[b]{0.5\textwidth}
\includegraphics[height =0.3\textheight, width =1\textwidth]{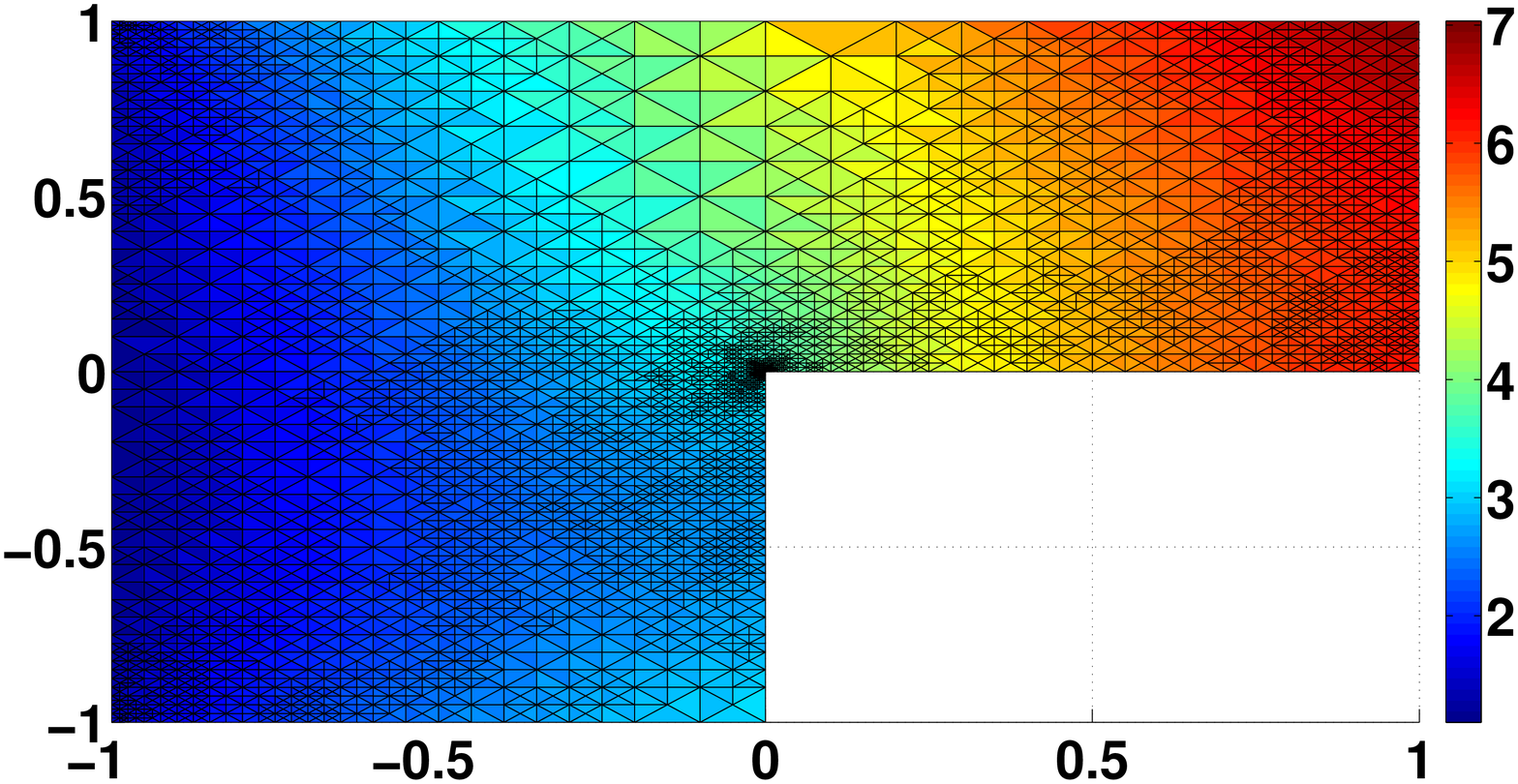}
\caption{Example \ref{example:1}}
\end{subfigure}
\begin{subfigure}[b]{0.49\textwidth}
\includegraphics[height =0.3\textheight, width =1\textwidth]{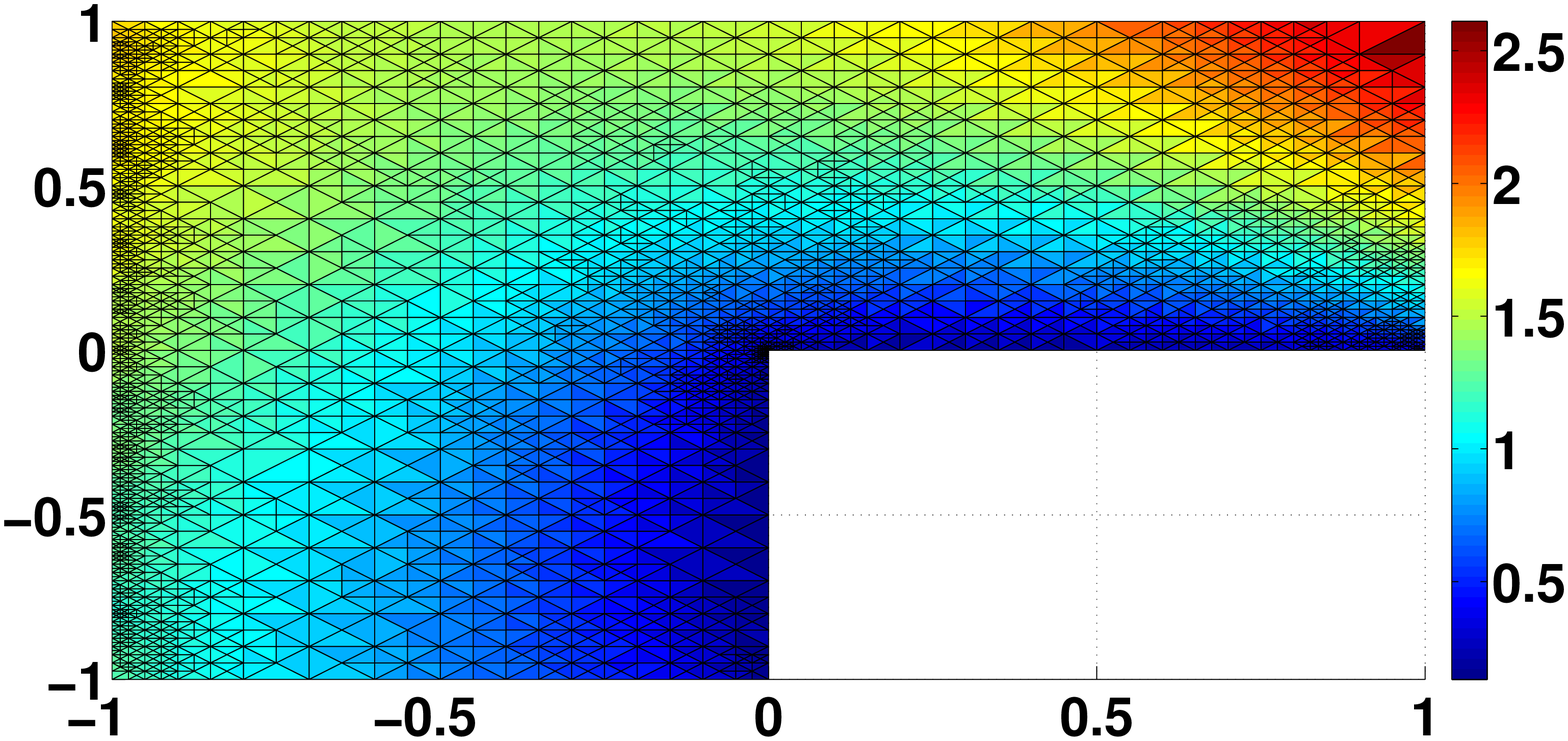}
\caption{Example \ref{example:2}}
\end{subfigure}
\caption{The adaptive solution with $\theta=0.1$ and $\tau=10^{-3}$. Panel (a)
gives the adaptive solution with $k=25$, dof=3248 and the
$H^1(\Omega)$-relative error is $6.48\%$; Panel (b) shows the adaptive solution
with $k=22$, dof=3070 and the $H^1(\Omega)$-relative error of $6.27\%$.}
\label{fig:solutions}
\end{figure}

\begin{figure}[htb!]
\centering
\begin{subfigure}[b]{0.48\textwidth}
\includegraphics[trim={10.5cm 0 10.5cm 0},clip,width =1\textwidth]{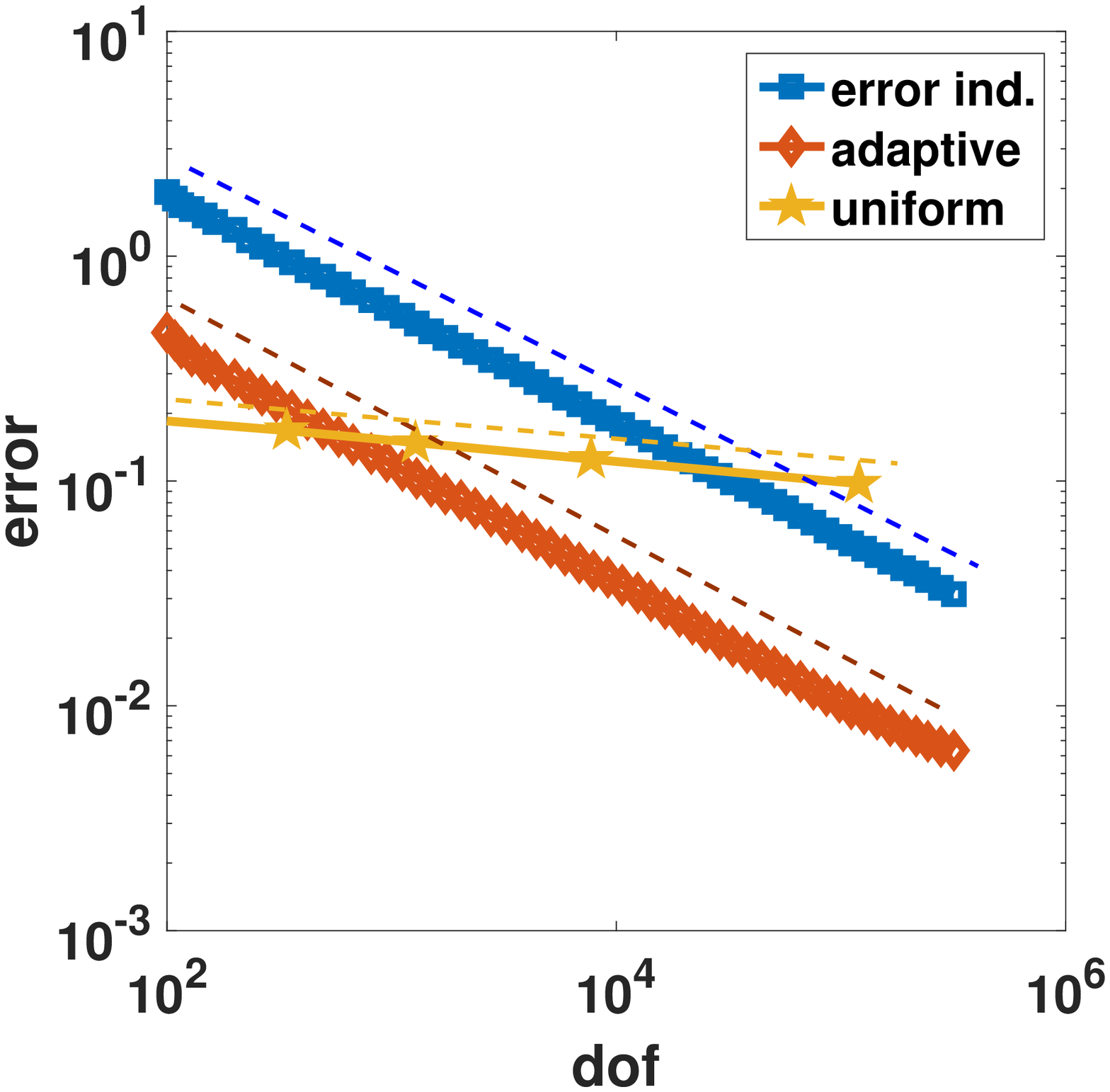}
\caption{$\theta=0.1$, iteration: 58}
\end{subfigure}
\begin{subfigure}[b]{0.48\textwidth}
\includegraphics[trim={10.5cm 0 10.5cm 0},clip,width =1\textwidth]{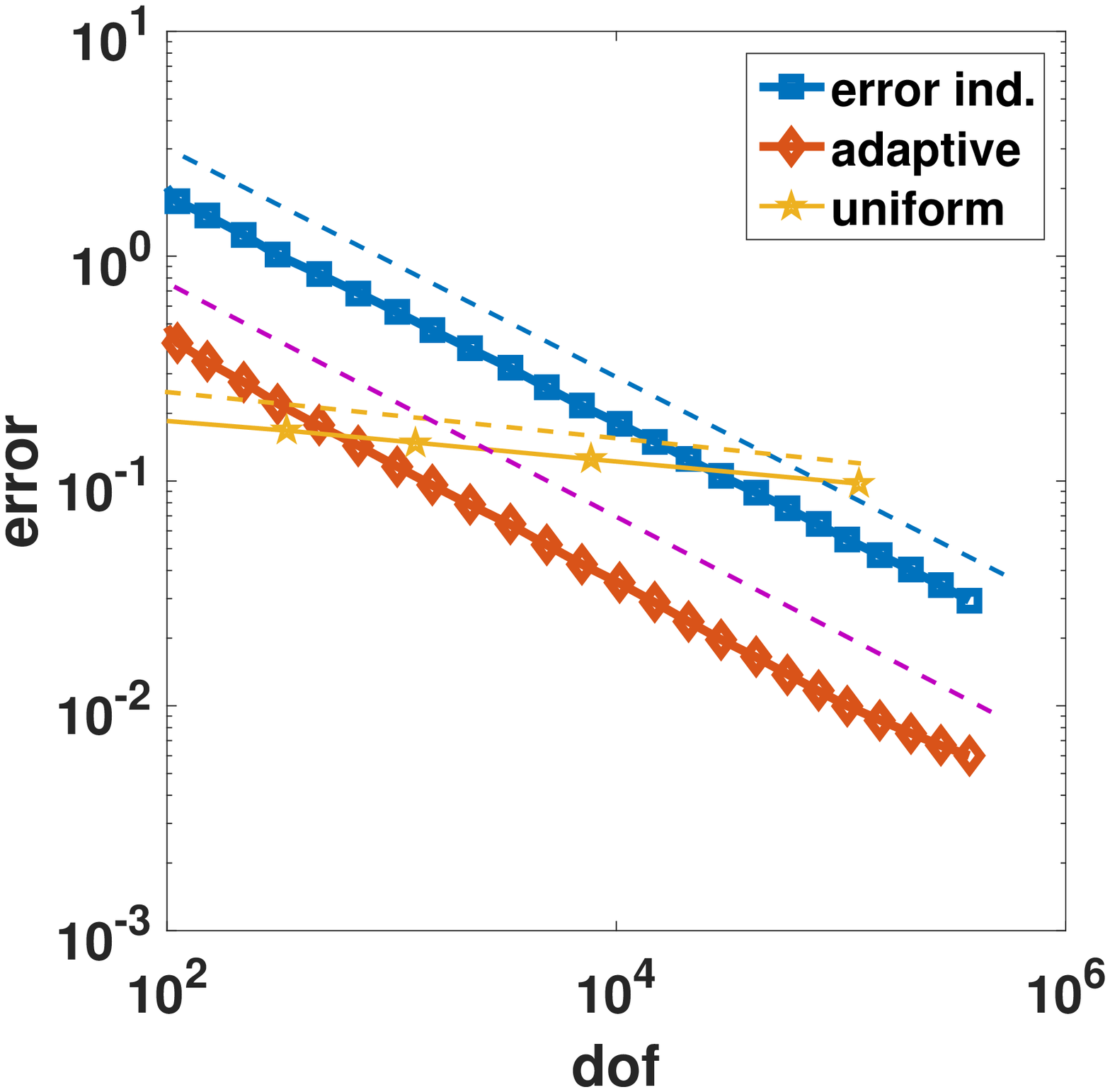}
\caption{$\theta=0.3$, iteration: 24}
\end{subfigure}
\caption{Error estimator and $H^1(\Omega)$-error versus dof. with $\tau=10^{-3}$
for Example \ref{example:1}. In Panel (a), the slopes of the dashed lines are -0.51, -0.54
and -0.09 for the indicator, the adaptive refinement $H^1(\Omega)$-error and the uniform
refinement $H^1(\Omega)$-error, respectively, and in Panel (b), the slopes of
the dashed lines are -0.50, -0.53 and -0.09, for the estimator, the adaptive
refinement $H^1(\Omega)$-error and the uniform refinement $H^1(\Omega)$-error, respectively.}
\label{fig:solutions_eg1_conv}
\end{figure}

In the second example, we consider the oscillatory boundary condition.
\begin{example}\label{example:2}
In this example, let $\Gamma_0=\emptyset$, $\Gamma_C$ be the boundary segments with the re-entrant corner
and $\Gamma_A$ as the rest of the boundary. We take
\begin{equation*}
g(x,y)=
\left\{
\begin{aligned}
\sin(20y), &\quad \text{on } \Gamma_1=\{-1\}\times[-1,1],\\
\sin(x)+\cos(y),&\quad \text{on } \Gamma_{A}\setminus\Gamma_1.\\
\end{aligned}
\right.\quad\mbox{and}\quad f(u)=e^{5u}-e^{-5u}.
\end{equation*}
\end{example}

\begin{figure}[htb!]
\centering
\begin{subfigure}[b]{0.48\textwidth}
\includegraphics[trim={10.5cm 0 10.5cm 0},clip,width=1\textwidth]{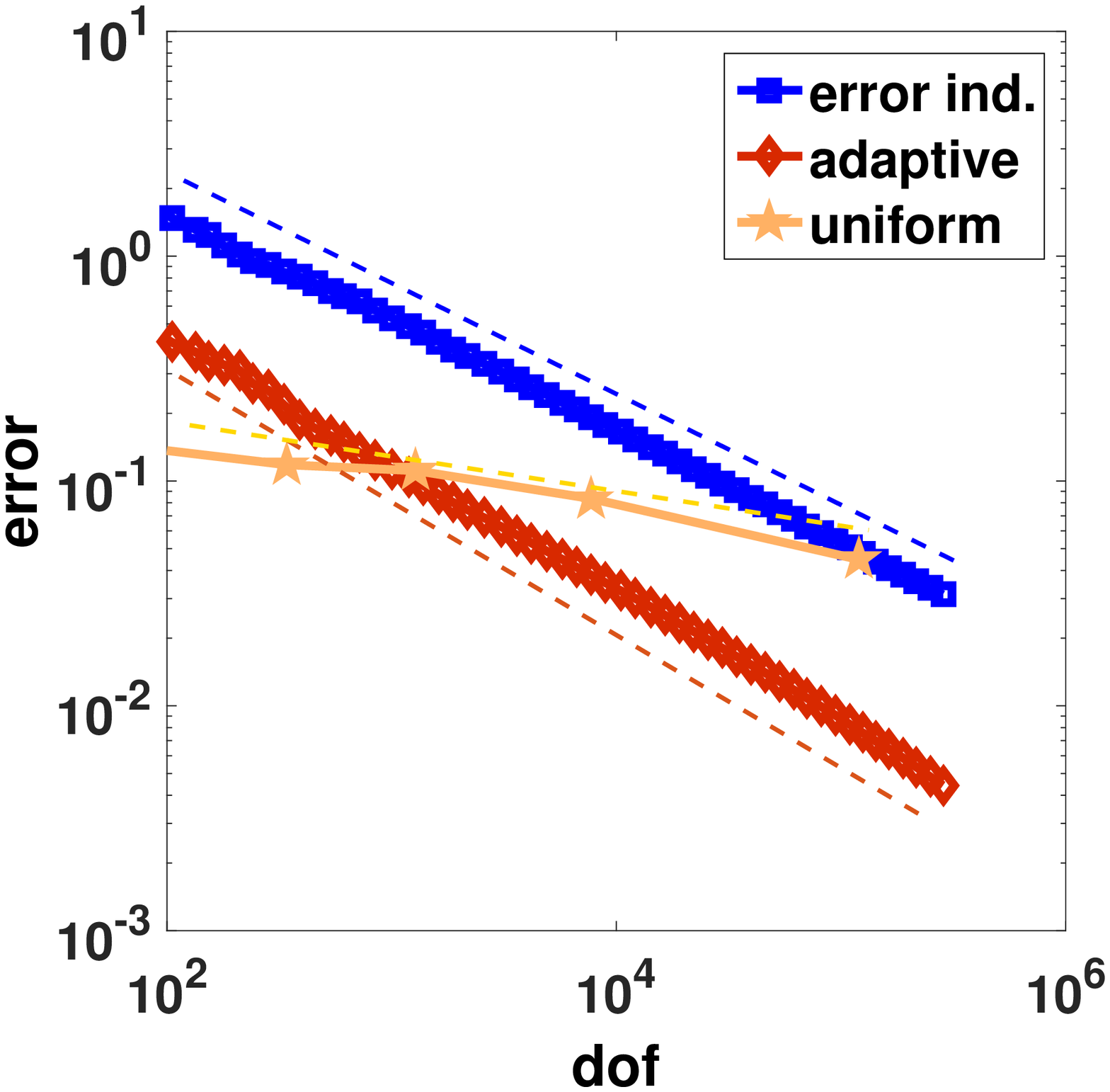}
\caption{$\theta=0.1$, iteration: 53.}
\end{subfigure}
\begin{subfigure}[b]{0.45\textwidth}
\includegraphics[trim={10.5cm 0 10.5cm 0},clip,width=1\textwidth]{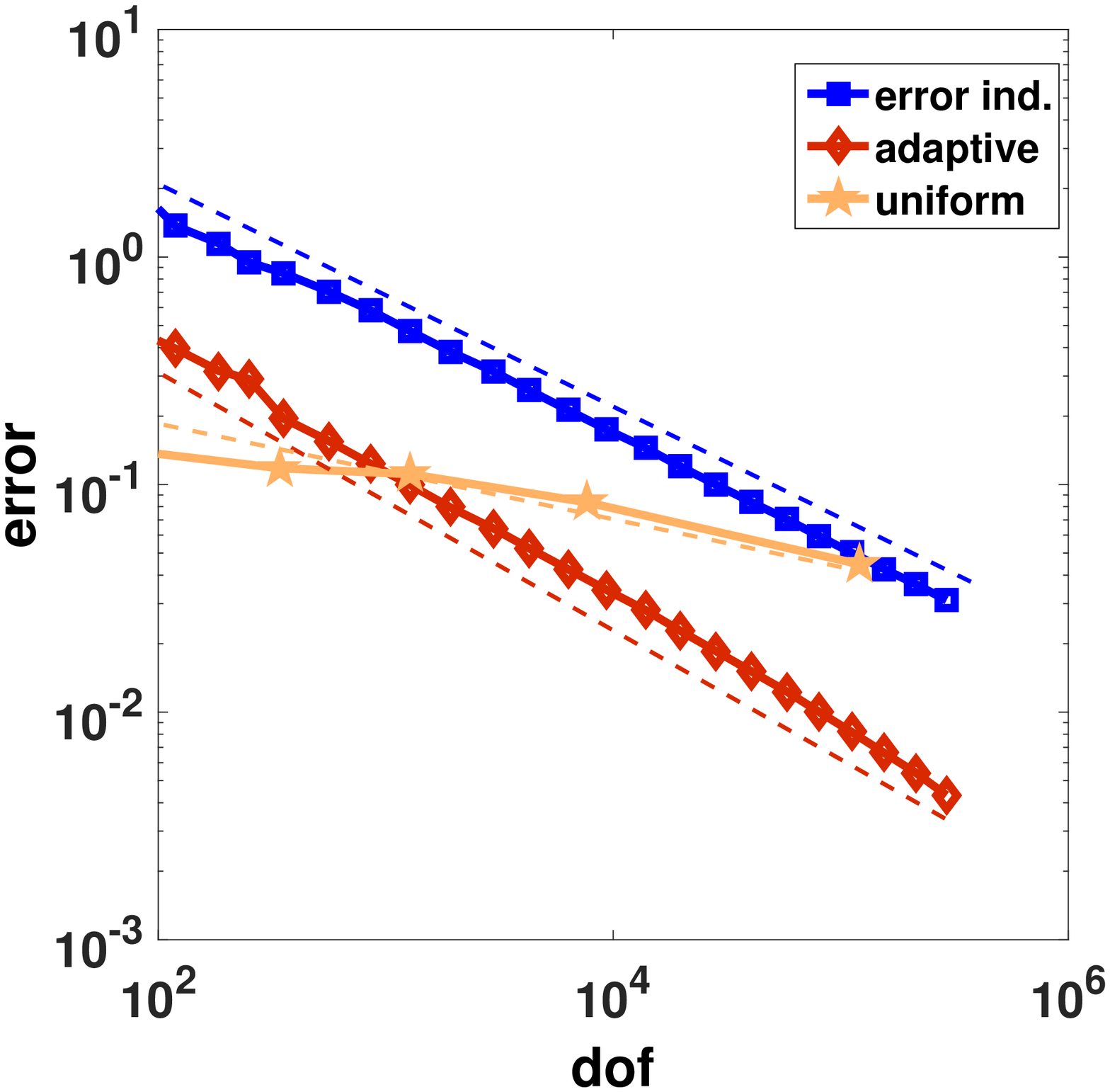}
\caption{$\theta=0.3$, iteration: 22.}
\end{subfigure}
\caption{Error estimator and $H^1(\Omega)$-error versus dof. 
with $\tau=10^{-3}$ for Example \ref{example:2}. In Panel (a), the slopes of the dashed lines are -0.50, -0.56 and -0.11, for the estimator, the adaptive refinement $H^1(\Omega)$-error and the uniform refinement $H^1(\Omega)$-error, respectively, and in Panel (b),
the slopes of the dashed lines are -0.51, -0.55 and -0.11, for the estimator, the adaptive refinement
$H^1(\Omega)$-error and the uniform refinement $H^1(\Omega)$-error, respectively.}
\label{fig:solutions_eg2_conv}
\end{figure}

In Example \ref{example:2}, the numerical solution on a fine mesh with a mesh size $h=1/2000$ and parameter $\epsilon=10^{-11}$
is taken to be the reference solution. The numerical results for the example are shown in Figs. \ref{fig:solutions}(b)
and \ref{fig:solutions_eg2_conv}. Due to the oscillatory boundary data, the region close to the left boundary requires
adaptive refinement, in addition to the re-entrant corner and the corners where the boundary condition changes.
On a very coarse mesh, the oscillatory boundary data is not properly resolved, which leads to a slower decay at the
beginning. Nonetheless, as the adaptive procedure proceeds, the convergence of the algorithm is fairly
steady, with the estimator decay rate $O(N^{-0.50})$ and the $H^1(\Omega)$ convergence rate $O(N^{-0.56})$ for
$\theta=0.1$. We observe similar convergence rates for $\theta=0.3$ from Fig. \ref{fig:solutions_eg2_conv}(b).

\section{Concluding remark}

In this paper, for a 2D variational problem governed by a linear diffusion equation
and a nonlinear boundary condition, we have analyzed an adaptive finite element method based on a
residual-typed {\em a posteriori} error estimator and the D\"{o}rfler marking. We established a quasi-optimal
decay rate in terms of the number of elements for the algorithm, which is confirmed by the
numerical experiments. One natural question is to extend the analysis to the 3D case.


\begin{thebibliography}{99}

\bibitem{ao}
M. Ainsworth and J. T. Oden, \textit{A Posteriori Error Estimation
in Finite Element Analysis}, Pure and Applied Mathematics,
Wiley-Interscience, New York, 2000.

\bibitem{br}
I. Babu\v{s}ka and W. Rheinboldt, \textit{Error estimates for
adaptive finite element computations}, SIAM J. Numer. Anal., 15
(1978), 736-754.


\bibitem{bdk}
L. Belenki, L. Diening and C. Kreuzer, \textit{Optimality of an adaptive finite
element method for the p-Laplacian equation}, IMA J. Numer. Anal., 32 (2012), 484-510.

\bibitem{bdd}
P. Binev, W. Dahmen and R. DeVore, \textit{Adaptive finite element
methods with convergence rates}, Numer. Math., 97 (2004), 219-268.

\bibitem{Brenner&Scott}
S. C. Brenner and L. R. Scott, \textit{The Mathematical Theory of Finite Element Methods}, Texts in Applied Mathematics, 3rd Edition, Springer, New York, 2008.

\bibitem{cfpp}
C. Carstensen, M. Feischl, M. Page and D. Praetorius, {\it Axioms of adaptivity},
Comp. Math. Appl., 67 (2014), 1195-1253.

\bibitem{ckns}
J. M. Cascon, C. Kreuzer, R. H. Nochetto and K. G. Siebert,
\textit{Quasi-optimal convergence rate for an adaptive finite
element method}, SIAM J. Numer. Anal., 46 (2008), 2524-2550.


\bibitem{ciarlet}
P. G. Ciarlet, \textit{Finite element methods for elliptic
problems}, North-Holland, Amsterdam, 1978.


\bibitem{dk}
L. Diening and C. Kreuzer, \textit{Linear convergence of an adaptive finite element method for the p-Laplacian
equation}. SIAM J. Numer. Anal., 46 (2008), 614-638.

\bibitem{dks}
L. Diening, C. Kreuzer and R. Stevenson, \textit{Instance optimality of the adaptive maximum Strategy}, Found. Comput. Math., 16 (2016), 33-68.

\bibitem{dor}
W. D\"{o}rfler, \textit{A convergent adaptive algorithm for
Poisson's equation}, SIAM J. Numer. Anal., 33 (1996), 1106-1124.

\bibitem{ffp}
M. Feischl, T. F\"{u}hrer and D. Praetorius, \textit{Adaptive FEM with optimal convergence
rates for a certain class of non-symmetric and possibly non-linear problems}, SIAM J. Numer. Anal., 52 (2014), 601-625.

\bibitem{FunkenPW11}
S. A. Funken, D. Praetorius and P. Wissgott, \textit{Efficient implementation of adaptive $P1$-FEM in Matlab}, Comput. Methods Appl. Math., 11 (2011), 460-490.


\bibitem{gmz2}
E. M. Garau, P. Morin and C. Zuppa, \textit{Convergence of an adaptive Ka\v{c}anov FEM for quasi-linear problems}, Appl. Num. Math., 61 (2011), 512-529.

\bibitem{gmz3}
E. M. Garau, P. Morin and C. Zuppa, \textit{Quasi-optimal convergence rate of an AFEM for quasi-linear problems of monotone type}, Numerical Mathematics: Theory, Methods and Applications, 5 (2012), 131-156.

\bibitem{hs}
L. S. Hou and W. Sun, \textit{Optimal positioning of anodes for cathodic protection}, SIAM Cont and Opt., 34 (1996), 855-873.

\bibitem{ht}
L. S. Hou and J. C. Turner, \textit{Analysis and finite element approximation of an optimal control problem in electrochemistry with current density controls}, Numer. Math.,
71 (1995), 289-315.

\bibitem{koss}
I. Kossaczky. \textit{A recursive approach to local mesh refinement
in two and three dimensions}, J. Comp. Appl. Math., 55 (1995),
275-288.


\bibitem{LiXu}
G. Li and Y. Xu, \textit{A convergent adaptive finite element method for cathodic protection}, Comput. Methods Appl. Math., 17 (2017), 105-120.

\bibitem{mitc}
W. F. Mitchell, \textit{A comparison of adaptive refinement techniques for elliptic problems}. ACM Trans. Math. Software, 15 (1989), 326--347.


\bibitem{nsv}
R. H. Nochetto, K. G. Siebert and A. Veeser, \textit{Theory of
adaptive finite element methods: an introduction}, Multiscale,
Nonlinear and Adaptive Approximation (R. A. DeVore and A. Kunoth,
Eds), Springer, New York, 2009, 409-542.

\bibitem{sz90}
L. R. Scott and S. Zhang, \textit{Finite element interpolation of
nonsmooth functions satisfying boundary conditions}, Math. Comp., 54
(1990), 483-493.



\bibitem{stev1}
R. Stevenson, \textit{Optimality of a standard adaptive finite element method}, Found.
Comput. Math., 7 (2007), 245-269.

\bibitem{stev2}
R. Stevenson, \textit{The completion of locally refined simplicial partitions created by bisection}, Math. Comp., 77 (2008), 227-241.

\bibitem{traxler}
C. Traxler, \textit{An algorithm for adaptive mesh refinement in $n$
dimensions}, Computing, 59 (1997), 115-137.

\bibitem{ver}
R. Verf\"{u}rth, \textit{A Posteriori Error Estimation Techniques for Finite Element Methods
}, Oxford University Press, Oxford, 2013.


\end{thebibliography}
\end{document}